\documentclass[a4paper,fleqn,final]{cas-sc}

\usepackage{silence}
\usepackage[numbers,sort&compress]{natbib}

\usepackage{amsmath,amssymb,amsthm}
\usepackage{mathtools}          
\usepackage{bm}

\usepackage{graphicx}
\usepackage{float}              
\restylefloat{figure}           
\usepackage{caption}
\usepackage{subcaption}         
\usepackage{placeins}           

\graphicspath{{1st Paper/update paper/}}

\renewcommand{\topfraction}{0.9}         
\renewcommand{\bottomfraction}{0.8}      
\renewcommand{\textfraction}{0.07}       
\renewcommand{\floatpagefraction}{0.5}                                           
\usepackage{algorithm}
\usepackage{algpseudocode}

\usepackage{booktabs}

\usepackage{xcolor}
\usepackage{tikz}
\usetikzlibrary{calc,arrows,arrows.meta,shapes,positioning,fit}

\definecolor{figpaper} {RGB}{250,248,243}
\definecolor{slate}    {RGB}{ 38, 70, 83}
\definecolor{slatefill}{RGB}{213,228,231}
\definecolor{ochre}    {RGB}{174,131, 44}
\definecolor{ochrefill}{RGB}{245,234,206}
\definecolor{plum}     {RGB}{106, 61, 99}
\definecolor{plumfill} {RGB}{233,217,230}
\definecolor{rust}     {RGB}{172, 84, 56}
\definecolor{rustfill} {RGB}{247,224,213}
\definecolor{moss}     {RGB}{ 55, 88, 62}
\definecolor{mossfill} {RGB}{218,231,217}

\numberwithin{equation}{section}

\theoremstyle{plain}
\newtheorem{theorem}{Theorem}[section]
\newtheorem{lemma}[theorem]{Lemma}
\newtheorem{proposition}[theorem]{Proposition}
\theoremstyle{definition}
\newtheorem{assumption}[theorem]{Assumption}

\theoremstyle{remark}
\newtheorem{remark}[theorem]{Remark}

\ExplSyntaxOn
\cs_gset:Npn \__first_footerline:
  {
    \group_begin:
    \small
    \sffamily
    \__short_authors:
    \group_end:
  }
\ExplSyntaxOff

\makeatletter
\def\ps@first{%
  \let\@oddhead\@empty
  \let\@evenhead\@empty
  \def\@oddfoot{\hfill\thepage\hfill}%
  \let\@evenfoot\@oddfoot
}
\def\ps@pprintTitle{%
  \let\@oddhead\@empty
  \let\@evenhead\@empty
  \def\@oddfoot{\hfill\thepage\hfill}%
  \let\@evenfoot\@oddfoot
}
\makeatother
\begin{document}
\let\WriteBookmarks\relax       

% Disable bookmark writing
\let\WriteBookmarks\relax

% Improve float placement
\renewcommand{\floatpagefraction}{0.8}
\renewcommand{\textfraction}{0.1}

% relax LaTeX's conservative float rules
\renewcommand{\topfraction}{0.9}      
\renewcommand{\bottomfraction}{0.8}
\renewcommand{\textfraction}{0.07}    
\renewcommand{\floatpagefraction}{0.75}  
\setcounter{topnumber}{3}
\setcounter{totalnumber}{5}

% Running title and author names
\shorttitle{Residual-based adaptive refinement in VC-PINNs}
\shortauthors{S. Giri, M. Malik and R. Yadav}

% Title
\title{\LARGE{Residual-Based Adaptive Refinement in Hybrid\texorpdfstring{\\}{ } Variational-Collocation PINNs for Singular Elliptic PDEs\texorpdfstring{\\}{ } with Mixed Dirichlet-Neumann Boundary Conditions}}

% Authors
% First Author
\author[1]
{Somnath Giri}
[orcid=0009-0004-9829-3805]
\ead{girisomnath70@gmail.com}

% Corresponding Author
\author[1]
{Muslim Malik}
[orcid=0000-0003-0055-7581]
\cormark[1]
\ead{muslim@iitmandi.ac.in}

% Third Author
\author[1]
{Rishikesh Yadav}
[orcid=0000-0003-3419-9356]
\ead{rishikesh@iitmandi.ac.in}

% Author Affiliation
\affiliation[1]
{
    organization={School of Mathematical \& Statistical Sciences, Indian Institute of Technology Mandi},
    city={Mandi},
    postcode={175005},
    state={Himachal Pradesh},
    country={India}
}

% Corresponding Author Footnote
\cortext[1]{Corresponding author}

% Abstract
\begin{abstract}
Solutions of second-order elliptic PDE problems on nonconvex or cracked domains with mixed Dirichlet-Neumann conditions typically lie outside $H^2(\Omega)$. Near re-entrant corners, crack tips and points where the boundary condition changes type, the gradient grows like $r^{\lambda-1}$ with $\lambda<1$. We study residual-based adaptive refinement in hybrid variational-collocation PINNs. A multiscale Fourier-feature network is trained on a composite loss. The loss combines a variational residual tested against Legendre bubble functions, a strong-form collocation term and penalties for the Dirichlet and Neumann data. The variational term needs only first derivatives of the network, whereas the collocation term requires second derivatives and provides a residual that can be evaluated at any point. After each training stage, candidate points selected by this residual are added to the collocation set. The method is tested on five two dimensional benchmarks, namely a Mode~III crack, a cross-shaped domain with four re-entrant corners with unequal amplitude, a reaction-diffusion problem with four localized peaks, a monotone semilinear problem with a boundary layer and the Motz problem. Compared with uniform point insertion under the same collocation and iteration budget, residual-based refinement reduced the relative $L^2$ error by up to a factor of $32$, the relative $L^\infty$ error by up to a factor of $38$ and the relative $H^1$ semi-norm error by up to a factor of $15$.
\end{abstract}

\hfuzz=120pt
%% Keywords %%
\begin{keywords}
Hybrid variational-collocation PINNs\sep
Residual-based adaptive refinement\sep
Adaptive collocation\sep
Geometric singularities \sep
Mixed boundary conditions \sep
Petrov-Galerkin methods \sep
Singular elliptic PDEs
\end{keywords}     
\maketitle
\hfuzz=0.1pt

%% Introductions %%
\section{Introduction}\label{Sec1_intro}
Many physical phenomena, including steady heat transfer, mass transport, electrostatic fields, linear elasticity and incompressible potential flows, are governed by second-order elliptic boundary value problems~\cite{Ciarlet2002,BrennerScott2008}. In many applications, the primary quantities of interest are gradient, such as heat fluxes, electric fields and stress intensity factors. For such quantities, the accuracy of a numerical method in approximating \(\nabla u\) is often as important as its accuracy in approximating \(u\)~\cite{LeVeque2007}. Mixed Dirichlet-Neumann boundary conditions frequently arise in elliptic problems, with the Neumann component presenting the greater difficulty. As it prescribes normal derivatives, its accurate enforcement requires resolving gradients that may be singular near crack faces and re-entrant corners. The change of boundary condition type at a Dirichlet-Neumann junction introduces a singularity even in the absence of geometric singularities~\cite{Grisvard2011}. The classical Motz problem illustrates this effect on a straight boundary with smooth data~\cite{Motz1947}.

Near a corner of interior angle \(\omega\), Kondrat'ev-Grisvard theory~\cite{Kondratev1967,Grisvard2011} provides the decomposition $u=u_{\mathrm{reg}}+\sum_j c_j \chi(r)\, r^{\lambda_j}\phi_j(\theta),$ where \(u_{\mathrm{reg}}\in H^2(\Omega)\) and the singular exponents \(\lambda_j\) depend on both the interior angle and the two adjacent boundary conditions and equal $\frac{j\pi}{\omega}$ when both conditions are of the same type. With \(\lambda=\lambda_1\) denoting the leading singular exponent, \(u\in H^{s}(\Omega)\) for every \(s<1+\lambda\) and \(|\nabla u|\sim r^{\lambda-1}\) is unbounded whenever \(\lambda<1\), as at re-entrant corners (\(\omega>\pi\)) and at Dirichlet-Neumann junctions. This loss of regularity is intrinsic to the underlying problem and cannot be eliminated by the choice of approximation space. A uniform discretization fine enough to resolve the singularity over resolves the rest of the domain. Adaptive and \(hp\)-adaptive finite element methods instead use a posteriori error estimators~\cite{Verfurth2013} to concentrate resolution near corners and crack tips. They recover optimal~\cite{BinevDahmenDeVore2004,Cascon2008} or even exponential~\cite{SzaboBabuska2021,MitchellMcClain2011} convergence rates, at the cost of mesh generation and local refinement.

PINNs enforce the governing PDE by minimizing the strong-form residual together with boundary condition penalties, with derivatives computed by automatic differentiation~\cite{Raissi2019,Lagaris1998,Baydin2018}. Their meshfree formulation enables the direct treatment of complex geometries~\cite{Karniadakis2021,NoorMalik2025}. However, three factors limit the accuracy of PINNs for the class of problems considered here. First, the strong formulation requires the evaluation of second-order derivatives. Second, the optimization process exhibits a bias toward smooth, low-frequency solution components, leading to poor resolution of singularities and sharp layers~\cite{Rahaman2019,WangWangPerdikaris2021,Krishnapriyan2021}. Third, Neumann boundary conditions are typically enforced either through penalty terms, whose weights must be balanced against the residual term~\cite{Wang2021} or through specially designed trial spaces~\cite{McFallMahan2009,SukumarSrivastava2022,BeidokhtiMalek2009}. For corner singularities and mixed boundary conditions, singularity enriched PINNs~\cite{HuJinZhou2024} add the leading singular functions, with trainable coefficients to the network.

In variational PINNs (VPINNs)~\cite{Kharazmi2019}, the strong-form residual is replaced by a variational residual of Petrov-Galerkin type, with the neural network representing the trial solution and local polynomial basis functions defining the test space~\cite{Kharazmi2021,GriffithsLorenz1978}. Integration by parts transfers derivatives to the test functions, making only first-order derivatives of the network necessary. The mathematical theory of VPINNs discretizations is now well developed. It covers a priori energy norm estimates under inf-sup stability conditions, the effects of quadrature accuracy and test-space enrichment~\cite{BerroneCanutoPintore2022a} and reliable and efficient a posteriori error estimation~\cite{BerroneCanutoPintore2022b}. Further advances include basis independent residual minimization through the Riesz representation of the discrete dual norm~\cite{Rojas2024} and its spectral realization in the Deep Fourier Residual framework~\cite{Taylor2023,Taylor2024}.

Adaptive strategies for VPINNs and related weak-form neural solvers typically act on the test space or on an underlying mesh. For example, the meshfree VPINN (MF-VPINN) generates additional test functions in regions identified by a posteriori error indicators~\cite{BerronePintore2024}, the robust VPINN (RVPINN) adaptively enriches the test space using information derived from the mismatch between the discrete and exact Riesz representatives~\cite{Udomworarat2026} and finite element-interpolated neural networks with adaptive refinement modify the underlying mesh used to represent the network approximation~\cite{Badia2025}. However, the variational residual does not provide a pointwise measure of error and therefore cannot be evaluated or ranked at arbitrary locations \(\bm{x}\in\Omega\). Consequently, pure VPINNs formulations admit no point level adaptivity and the residual-based sampling strategies developed for strong-form PINNs~\cite{Lu2021,Wu2023,MaoMeng2023,Tang2023,Gao2023} cannot be applied directly.

A hybrid loss restores this capability. Specifically, augmenting the variational residual with the strong-form collocation term

$$
\mathcal{L}_{f}
=
\frac{1}{N_f}
\sum_{k=1}^{N_f}
\bigl|R(\hat{u})(\bm{x}_{k})\bigr|^{2},
$$

where \(R(\hat{u})\) is the strong-form residual~\eqref{eq:strongresidualfunction} of the network approximation \(\hat{u}\) and \(\bm{x}_{1},\dots,\bm{x}_{N_f}\) are the collocation points, provides a pointwise residual that can be evaluated throughout \(\Omega\). This residual may then be computed on a dense candidate pool and ranked to guide residual-based adaptive refinement (RAR). In this way, adaptivity acts directly on the collocation support, whereas variational adaptivity acts through the test space, the two mechanisms are complementary and address distinct aspects of the approximation. At the same time, the variational component depends only on first-order derivatives of \(\hat{u}\), thereby avoiding the \(H^{2}(\Omega)\) regularity requirements that are violated near crack tips, re-entrant corners and other singular features. The proposed method is neither a pure weak-form method nor is it entirely free of second-order derivatives, as both the collocation term \(\mathcal{L}_{f}\) and the RAR indicator require the evaluation of \(\Delta\hat{u}\). Moreover, the Neumann boundary condition is not imposed naturally, the Legendre bubble functions that serve as test functions vanish on element boundaries, so the boundary functional drops out of the discrete variational residual. The prescribed flux is therefore enforced through an explicit penalty term, natural imposition would require a modified test space on elements adjacent to the Neumann boundary~\cite{INIVPINN2026}. In addition, no adaptive \(h\)- or \(p\)-refinement is performed, both the mesh partition and the local polynomial degree remain fixed throughout training. The underlying static discretization is that of \(hp\)-VPINNs~\cite{Kharazmi2021} and we call the resulting framework a hybrid variational-collocation PINNs (VC-PINNs), or VC-PINNs-RAR when it is combined with RAR. Collocation also enters collocation-based robust VPINNs (CRVPINNs)~\cite{Los2025}, where the robust variational loss is discretized by a point collocation scheme with a sparse Gram matrix. Here, by contrast, the collocation points carry a separate strong-form term beside a quadrature-based variational residual.

The hybrid formulation is chosen as the five benchmark problems share two difficulties that neither component of the loss addresses on its own. First, in the Mode~III crack, the cross-shaped domain, the semilinear L-shaped problem and the Motz problem, the solution has a crack-tip, re-entrant-corner or Dirichlet-Neumann junction singularity, so that \(u\notin H^{2}(\Omega)\) and \(|\nabla u|\sim r^{\lambda-1}\). For such solutions the variational residual, which involves only \(\nabla\hat{u}\), remains well defined, whereas a purely strong-form loss controls second derivatives that are not square integrable near the singular point. Second, the error in all five problems is localized, near the singular points, along the boundary layer of the semilinear problem and around the steep peaks of the reaction-diffusion problem. A fixed set of collocation points resolves such regions only if it is dense everywhere. The pointwise strong-form residual of the hybrid loss, by contrast, can be ranked on a candidate pool, so that RAR places new points where the error concentrates. In the cross-shaped domain this ranking also allows the inserted points to be shared among four singular corners rather than one. The two components therefore play distinct roles. The variational residual provides a global constraint that remains meaningful under reduced regularity and the collocation term provides the pointwise indicator that drives refinement. The five benchmark problems are classical test problems for these features~\cite{Grisvard2011,Motz1947,Mitchell2013}.

The contributions of this work are as follows. First, we observe that a strong-form collocation term kept beside the variational residual is what allows point insertion to act on training, its residual can be evaluated and ranked at any candidate point, whereas each entry of the variational residual belongs to a test function, not to a point. Second, we adapt RAR~\cite{Lu2021} to this hybrid loss for elliptic problems with mixed Dirichlet-Neumann conditions. Where needed, candidates within a guard radius of a singular point are excluded, as there the pointwise residual is not a reliable refinement indicator. Third, we compare every RAR run with a uniform insertion run on five two dimensional benchmark problems. The two runs of each comparison, the uniform arm and the RAR arm, share the architecture, quadrature rule, collocation schedule and optimizer schedule, point placement is the only difference. The relative $L^{2}$, $L^{\infty}$ and $H^{1}$ semi-norm errors are reported problem by problem together with the training budgets. 

The paper is organized as follows. Section~\ref{Sec2_preliminaries} states the model problem, its weak formulation and the regularity results used later and it separates the element indexed variational residual from the pointwise strong-form residual, a distinction on which the rest of the paper builds. Section~\ref{Sec3_hybrid-VC-PINNs} develops the hybrid discretization, the composite loss, the RAR procedure and the training algorithm, together with the budget under which the two arms are compared. Section~\ref{Sec4_airchitecture} assembles these into the architecture of the framework. Five numerical examples are presented in section~\ref{Sec5_neumerical_examples}, followed by concluding remarks in section~\ref{Sec6_conclusion}.

%% Preliminaries
\section{Preliminaries}\label{Sec2_preliminaries}
This section fixes the model problem and its weak form, records the regularity imposed by the geometry and the boundary conditions and distinguishes the two residuals on which the method of section~\ref{Sec3_hybrid-VC-PINNs} is built.

\subsection{Model problem and weak formulation}\label{Sec2.1_model_weak}
Let $\Omega\subset\mathbb{R}^{2}$ be a bounded, connected open set whose boundary is a finite union of closed straight segments, with vertex set $\mathcal{V}$ and interior angles $\omega_{v}\in(0,2\pi]$. The value $\omega_{v}=2\pi$ occurs only at the tip of a slit, the one case in which $\Omega$ is not Lipschitz, in the terminology of~\cite{Grisvard2011}, $\Omega$ is a polygon with cracks. The two faces of a slit are treated as distinct boundary portions with opposite outward normals, so that a function in $H^{1}(\Omega)$ has two independent traces there.

The boundary is split into disjoint, relatively open sets $\Gamma_{D}$ and $\Gamma_{N}$ with $\partial\Omega=\overline{\Gamma_{D}\cup\Gamma_{N}}$ and either (D) $|\Gamma_{D}|>0$ or (N) $\Gamma_{D}=\emptyset$, where $|\cdot|$ denotes one-dimensional measure. Case~(N) occurs only in section~\ref{reaction_diffusion_PDE4}. With $(\cdot,\cdot)_{\Omega}$ the $L^{2}(\Omega)$ inner product, let
\begin{equation}
V=\{v\in H^{1}(\Omega):v|_{\Gamma_{D}}=0\},\qquad H^{1}_{g}=\{w\in H^{1}(\Omega):w|_{\Gamma_{D}}=g\},
\label{eq:spaces}
\end{equation}
so that $V=H^{1}_{g}=H^{1}(\Omega)$ in case~(N) and $H^{1}_{g}=u_{g}+V$ for any $u_{g}\in H^{1}(\Omega)$ with trace $g$ on $\Gamma_{D}$. The dual space of $V$ is denoted by $V'$ and $\langle\cdot,\cdot\rangle$ denotes the duality pairing between $V'$ and $V$.

\begin{assumption}[Data]\label{ass1_data}
$\bm{A}=\kappa\bm{I}$ with constant $\kappa>0$, $\kappa=1$ for the crack, cross and Motz problems and $\kappa=\varepsilon^{2}$ for the reaction-diffusion and semilinear problems, with $\varepsilon=0.01$ and $\varepsilon=0.05$, respectively. The data satisfy $f\in L^{2}(\Omega)$ and $h\in L^{2}(\Gamma_{N})$ and $g$ is the trace on $\Gamma_{D}$ of a function in $H^{1}(\Omega)$. The reaction term $\mathcal{N}:\mathbb{R}\to\mathbb{R}$ is continuous with polynomial growth, so $\mathcal{N}(w)\in L^{2}(\Omega)$ for every $w\in H^{1}(\Omega)$ by the embedding $H^{1}(\Omega)\hookrightarrow L^{q}(\Omega)$, $q<\infty$, which holds since $\Omega$ coincides, up to a null set, with a finite union of Lipschitz rectangles. Three cases occur: $\mathcal{N}\equiv0$ for the crack, cross and Motz problems, $\mathcal{N}(t)=t^{3}-t$ for the reaction-diffusion problem and $\mathcal{N}(t)=t^{3}$ for the semilinear problem.
\end{assumption}

Square integrability of $h$, rather than boundedness, is required as, in the cross problem, the prescribed flux behaves like $r^{-1/3}$ at the re-entrant corners. With $\bm{n}$ the outward unit normal, the model problem is
\begin{align}
-\nabla\!\cdot\!(\bm{A}\nabla u)+\mathcal{N}(u)&=f
  &&\text{in }\Omega,\label{eq:pde}\\
u&=g &&\text{on }\Gamma_{D},\label{eq:dir}\\
(\bm{A}\nabla u)\cdot\bm{n}&=h &&\text{on }\Gamma_{N}.\label{eq:neu}
\end{align}
Equation~\eqref{eq:pde} holds in the sense of distributions and pointwise only where $u\in H^{2}_{\mathrm{loc}}$, this fails at the singular points of section~\ref{Sce2.2_regularity}. Testing \eqref{eq:pde} with $v\in V$ and applying Green's formula together with \eqref{eq:neu} gives the weak problem: find $u\in H^{1}_{g}$ such that
\begin{equation}
a(u,v)+\bigl(\mathcal{N}(u),v\bigr)_{\Omega}=\ell(v)
\qquad\forall\,v\in V,
\label{eq:weak}
\end{equation}
where
\begin{equation}
a(u,v)=\int_{\Omega}\bm{A}\nabla u\cdot\nabla v\,d\bm{x},
\qquad
\ell(v)=\int_{\Omega}f\,v\,d\bm{x}+\int_{\Gamma_{N}}h\,v\,dS.
\label{eq:bilinear}
\end{equation}
Problem~\eqref{eq:weak} involves only first derivatives of $u$. The Dirichlet condition is essential, the Neumann condition is natural and enters through $\ell$. Section~\ref{Sec3.2_testspace_quadratures} shows that the discrete test space annihilates this term and section~\ref{sec:penalties} restores it by a penalty.

\begin{proposition}[Well-posedness]\label{prop1_wellposed}
Let Assumption~\ref{ass1_data} hold.
\begin{enumerate}
\item[(i)] If $\mathcal{N}\equiv0$ and case~(D) holds, $a$ is bounded and coercive on $V$ and \eqref{eq:weak} has a unique solution by the Lax-Milgram theorem~\cite{BrennerScott2008}.
\item[(ii)] If $\mathcal{N}$ is nondecreasing and case~(D) holds, the operator $\mathcal{G}:V\to V'$, $\langle\mathcal{G}(z),v\rangle =a(u_{g}+z,v)+(\mathcal{N}(u_{g}+z),v)_{\Omega}-\ell(v)$,
is bounded, hemicontinuous, coercive and strictly monotone, with
\begin{equation}
\langle\mathcal{G}(z_{1})-\mathcal{G}(z_{2}),z_{1}-z_{2}\rangle
\ge\kappa\,\|\nabla(z_{1}-z_{2})\|^{2}_{L^{2}(\Omega)} .
\label{eq:monotone}
\end{equation}
By the Browder-Minty theorem~\cite{Zeidler2013}, \eqref{eq:weak} has a unique solution, independent of the lift $u_{g}$.
\item[(iii)] If $\mathcal{N}(t)=t^{3}-t$ and case~(N) holds, $\mathcal{N}$ is decreasing on $(-\frac{1}{\sqrt{3}},\frac{1}{\sqrt{3}})$ and for $f\equiv h\equiv0$ the constants $u\equiv0$ and $u\equiv\pm1$ all solve \eqref{eq:weak}. Uniqueness fails and the boundary data do not select a branch.
\end{enumerate}
\end{proposition}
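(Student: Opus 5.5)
The plan is to treat the three items separately, with (i) a direct application of Lax–Milgram, (ii) a reduction to the lifted problem, and (iii) a direct computation.

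\emph{Item (i).} Boundedness of $a$ follows from Cauchy–Schwarz: $|a(u,v)|\le\kappa\|\nabla u\|_{L^2}\|\nabla v\|_{L^2}$. For coercivity on $V$ I would use a Poincaré–Friedrichs inequality $\|v\|_{L^2(\Omega)}\le C_P\|\nabla v\|_{L^2(\Omega)}$ for $v\in V$, valid because $|\Gamma_D|>0$ and $\Omega$ is connected. The slit causes no trouble: $\Omega$ is, up to a null set, a finite union of Lipschitz rectangles, and the standard compactness argument (Rellich on each piece, connectedness forcing a constant, vanishing trace on $\Gamma_D$ forcing zero) still applies. This gives $a(v,v)\ge\kappa(1+C_P^2)^{-1}\|v\|_{H^1}^2$.

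The right-hand side must also be a bounded functional. Writing $u=u_g+z$, it is $\ell(v)-a(u_g,v)$, which lies in $V'$ because $f\in L^2$, $h\in L^2(\Gamma_N)$, and the trace map $H^1(\Omega)\to L^2(\Gamma_N)$ is bounded (face by face on the rectangles). Lax–Milgram then yields a unique $z$. Independence of the lift holds because two solutions in $H^1_g$ differ by an element of $V$ solving the homogeneous problem, which is zero.

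\emph{Item (ii).} This is the step needing the most care, mainly for hemicontinuity and boundedness under cubic growth.

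\textbf{Bounded and well-defined.} The growth bound $|\mathcal N(t)|\le C(1+|t|^p)$ together with $H^1\hookrightarrow L^q$ for all $q<\infty$ gives $\|\mathcal N(w)\|_{L^2}\le C(1+\|w\|_{H^1}^p)$.

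\textbf{Hemicontinuity.} For $t\mapsto\langle\mathcal G(z+tw),v\rangle$, continuity of $\mathcal N$ gives pointwise convergence. Dominated convergence then applies with dominant $C(1+|u_g|+|z|+|w|)^p|v|\in L^1$.

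\textbf{Monotonicity estimate.} Since $\mathcal N$ is nondecreasing, $(\mathcal N(s_1)-\mathcal N(s_2))(s_1-s_2)\ge0$ pointwise. The linear part contributes exactly $\kappa\|\nabla(z_1-z_2)\|^2$, which gives \eqref{eq:monotone}. Strictness follows from this together with the Poincaré inequality of (i).

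\textbf{Coercivity.} I would apply \eqref{eq:monotone} with $z_2=0$: $\langle\mathcal G(z),z\rangle\ge\kappa\|\nabla z\|^2-\|\mathcal G(0)\|_{V'}\|z\|_V$. Dividing by $\|z\|_V$ gives growth to infinity, again using the Poincaré equivalence of norms.

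Browder–Minty then gives existence, and uniqueness follows from strict monotonicity. Lift independence follows as in (i): if $u_g'$ is another lift, then $u_g'-u_g\in V$, so the solution sets $u_g+z$ coincide.

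\emph{Item (iii).} This is a direct computation. Since $\mathcal N'(t)=3t^2-1<0$ for $|t|<1/\sqrt3$, $\mathcal N$ is decreasing there. For a constant $c$, $a(c,v)=0$, and with $f=h=0$ the weak form reduces to $(c^3-c)\int_\Omega v=0$ for all $v\in H^1(\Omega)$. This holds exactly when $c\in\{0,\pm1\}$. Hence three distinct solutions exist, and since $\Gamma_D=\emptyset$ there is no boundary data to distinguish them.
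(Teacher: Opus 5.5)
Your proposal is correct and follows the same route as the paper. The paper uses the Poincaré–Friedrichs inequality (obtained from compactness on the finite union of Lipschitz rectangles, connectedness and $|\Gamma_D|>0$) for Lax–Milgram in (i), pointwise monotonicity of $\mathcal N$ plus Browder–Minty in (ii), and direct substitution of constants in (iii). You also supply details the paper only asserts: boundedness via $H^1\hookrightarrow L^{2p}$, hemicontinuity by dominated convergence, and coercivity from \eqref{eq:monotone} with $z_2=0$.
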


Coercivity in (i) and (ii) rests on the Poincar\'e-Friedrichs inequality on $V$. On the slit domain it follows from $|\Gamma_{D}|>0$, the connectedness of $\Omega$ and the compactness of $H^{1}(\Omega)\hookrightarrow L^{2}(\Omega)$. The compactness holds as $\Omega$ coincides, up to a null set, with a finite union of Lipschitz rectangles.

\begin{remark}[Branch selection]\label{remarks1_anchors}
In view of proposition~\ref{prop1_wellposed}(iii), the reaction-diffusion problem is supplemented by twenty interior values of the exact solution, placed at the four peak centers and at four neighbors of each. They enter the loss~\eqref{eq:loss} as an additional quadratic term. They select the branch and also supply the locations of the features that the adaptive scheme resolve, hence no conclusion about feature discovery is drawn from that example. The semilinear problem has a unique solution by proposition~\ref{prop1_wellposed}(ii) and nothing beyond $f$, $g$, $h$ and $\Omega$ is supplied to the solver.
\end{remark}

\subsection{Regularity}\label{Sce2.2_regularity}
Call $p\in\partial\Omega\setminus\mathcal{V}$ a Dirichlet-Neumann junction if $p\in\overline{\Gamma_{D}}\cap\overline{\Gamma_{N}}$, let $\mathcal{J}$ be the finite set of such points and define the singular set $\mathcal{S}=\mathcal{V}\cup\mathcal{J}$. Its subset $\mathcal{S}^{*}=\{p\in\mathcal{S}:\lambda_{1}(p)<1\}$, with $\lambda_{1}(p)$ the leading exponent at $p$ given by~\eqref{eq:eigenpairs}, collects the points at which the expansion~\eqref{eq:kondratev} below can contain singular terms.
On every compact subset of $\overline{\Omega}\setminus\mathcal{S}$, $u\in H^{2}(\Omega)$ and $u$ is $C^{\infty}$ there for smooth data \citep{Grisvard2011,BrennerScott2008}. Near $p\in\mathcal{S}$ with interior angle $\omega$ and local polar coordinates $(r,\theta)$,
\begin{equation}
u=u_{\mathrm{reg}}+\sum_{0<\lambda_{j}<1}c_{j}\,\chi(r)\,
r^{\lambda_{j}}\phi_{j}(\theta),\qquad u_{\mathrm{reg}}\in H^{2}(\Omega),
\label{eq:kondratev}
\end{equation}
where $\chi$ is a smooth cut off and the coefficients $c_{j}$ are not known a priori. The pairs $(\lambda_{j},\phi_{j})$ solve $\phi''+\lambda^{2}\phi=0$ on $(0,\omega)$ under the boundary conditions of the two rays meeting at $p$ \citep{Grisvard2011},
\begin{equation}
\lambda_{j}=\frac{j\pi}{\omega}\ \ \text{(D,D), (N,N)},\qquad
\lambda_{j}=\frac{(2j-1)\pi}{2\omega}\ \ \text{(D,N), (N,D)},
\qquad j\ge1,
\label{eq:eigenpairs}
\end{equation}
with $\phi_{j}=\sin(\lambda_{j}\theta)$ if the ray $\theta=0$ is Dirichlet and $\phi_{j}=\cos(\lambda_{j}\theta)$ if it is Neumann. A change of boundary condition type shifts the spectrum by half an index, so $\lambda_{1}=\frac{1}{2}$ already at $\omega=\pi$; this is the mechanism of the Motz problem~\cite{Motz1947}. Terms with $\lambda_{j}\in\mathbb{N}$ whose angular part makes $r^{\lambda_{j}}\phi_{j}$ a harmonic polynomial are smooth and belong to $u_{\mathrm{reg}}$. For the two nonlinear problems, $-\kappa\Delta u=f-\mathcal{N}(u)\in L^{2}(\Omega)$, so \eqref{eq:kondratev} holds with the same $\lambda_{j}$ and different $c_{j}$.

Let $\lambda=\min\limits_{p\in\mathcal{S}}\lambda_{1}(p)$ and suppose $\lambda<1$.
Since, for any $r_{0}>0$, $\int_{0}^{r_{0}}r^{2(\lambda-s)}\,r\,dr<\infty$ exactly when
$s<1+\lambda$,
\begin{equation}
u\in H^{s}(\Omega)\ \ \forall\,s<1+\lambda,
\qquad\text{if } c_{1}\neq0 \text{ and}\ \
u\notin H^{1+\lambda}(\Omega),\ \ |\nabla u|\sim r^{\lambda-1}
\ \text{as } r\to0 .
\label{eq:reg}
\end{equation}
In that case $\nabla u\in L^{2}(\Omega)$ is unbounded and $u\notin H^{2}(\Omega)$.
Table~\ref{tab:exponents} lists the leading exponents of the five benchmark problems. For the four benchmarks with $\lambda<1$, the second derivatives required by~\eqref{eq:strongresidualfunction} are not in $L^{2}(\Omega)$. In the reaction-diffusion problem every vertex is a right angle with Neumann data on both rays, so $\lambda_{j}=2j$, no singular term arises and $u\in H^{2}(\Omega)$. Its difficulty lies in resolving steep peaks, not in regularity. The semilinear problem also carries a boundary layer of width $\frac{\varepsilon}{\sqrt{3}}$ along $\Gamma_{D}$, analyzed in section~\ref{semilinear_PDE5}.

\begin{table}[t]
\caption{Leading singular exponents from \eqref{eq:eigenpairs}. In the cross problem the corner singularity enters through the flux data $h$, the exponent is the same for (D,D) and (N,N).}
\label{tab:exponents}
\centering\small
\begin{tabular}{@{}llccc@{}}
\toprule
Problem & Points of $\mathcal{S}$ that govern $\lambda$ & Rays & $\omega$ & $\lambda_{1}$\\
\midrule
Crack (\S\ref{creak_slit_PDE1}) & crack tip & (N,N) & $2\pi$ & $1/2$\\
Cross (\S\ref{plus_shaped_PDE2}) & four re-entrant corners & (N,N) & $3\pi/2$ & $2/3$\\
Reaction-diffusion (\S\ref{reaction_diffusion_PDE4}) & vertices & (N,N) & $\pi/2$ & $2$\\
Semilinear (\S\ref{semilinear_PDE5}) & re-entrant corner & (D,D) & $3\pi/2$ & $2/3$\\
Motz (\S\ref{motz_PDE6}) & junction at $(0,0)$ & (N,D) & $\pi$ & $1/2$\\
\bottomrule
\end{tabular}
\end{table}

\subsection{Two notions of residual}\label{Sec2.3_residuals}
The \emph{variational residual} of $w\in H^{1}(\Omega)$ is
\begin{equation}
\mathcal{R}(w;v)=a(w,v)+\bigl(\mathcal{N}(w),v\bigr)_{\Omega}-\ell(v),
\qquad v\in V,
\label{eq:varresidualfunctional}
\end{equation}
so $\mathcal{R}(w;\cdot)\in V'$. It involves only first derivatives and is therefore well defined under~\eqref{eq:reg}. A VPINNs~\citep{Kharazmi2019,Kharazmi2021} is a Petrov-Galerkin method that takes its trial functions from a class of neural networks and its test functions from a finite-dimensional space $V_{K}\subset V$. It drives $\mathcal{R}$ to zero on a basis of $V_{K}$. The space $V_{K}$ used here is constructed in section~\ref{Sec3.2_testspace_quadratures}.

The \emph{strong-form residual} is
\begin{equation}
R(w)(\bm{x})=-\nabla\!\cdot\!\bigl(\bm{A}\nabla w(\bm{x})\bigr)
+\mathcal{N}\bigl(w(\bm{x})\bigr)-f(\bm{x}),
\label{eq:strongresidualfunction}
\end{equation}
and it requires second derivatives. As observed in~\cite{Kharazmi2019,Kharazmi2021}, the choice $v=\delta_{\bm{x}_{0}}$, the Dirac measure at a point $\bm{x}_{0}\in\Omega$, formally turns the variational residual into the strong-form residual at $\bm{x}_{0}$. For $w\in H^{1}(\Omega)$ this identification is not available. In two dimensions
$H^{1}(\Omega)\not\hookrightarrow C(\overline{\Omega})$, with $\log|\log r|$ as the standard witness, so point evaluation is unbounded on $H^{1}(\Omega)$, $\delta_{\bm{x}_{0}}\notin V'$ and
$\mathcal{R}(w;\delta_{\bm{x}_{0}})$ is undefined. Consequently, $\mathcal{R}$ can be tested yet cannot be evaluated, ranked or compared at a point. Conversely, $R(u)$ is not defined near any $p\in\mathcal{S}^{*}$, as $u\notin H^{2}(\Omega)$ there. In section~\ref{Sec3.3_collcation_&_composite_loss}, $R$ is applied to the network trial function
$\hat{u}(\cdot;\bm{\theta})$, which is twice differentiable away from $\mathcal{S}$, therefore $R(\hat{u})$ is a function of position that can be evaluated at any candidate point and it is the only loss component with this property.

\begin{remark}[The trial function is not globally smooth]\label{rem:notsmooth}
The input enrichment of section~\ref{Sec3.1_trial_enrich} contains features $r^{\lambda_{1}(p)}$ centered at points $p\in\mathcal{S}^{*}$. Away from these points $\hat{u}$ is $C^{\infty}$. At them,
$|D^{2}r^{\lambda_{1}}|\sim r^{\lambda_{1}-2}$, where $D^{2}$ denotes the Hessian, so $\hat{u}$ inherits the behavior~\eqref{eq:reg} it is built to represent. This has two consequences for~\eqref{eq:strongresidualfunction} evaluated at $\hat{u}$. First, the individual second derivatives grow like $r^{\lambda_{1}-2}$ while their
combination is the small quantity being measured, so near $p$ the computed value is eventually dominated by floating point cancellation. Second, the $r^{\lambda_{1}-2}$ envelope is a genuine property of the solution, so a ranking by $|R(\hat{u})|$ favors proximity to $p$ independently of the local error. Candidates within a guard radius of $\mathcal{S}^{*}$ are therefore excluded from refinement (section~\ref{sec:guard}).
\end{remark}

 % Hybrid Variational-Collocation 
\section{Hybrid variational-collocation discretization with residual-based adaptive refinement}\label{Sec3_hybrid-VC-PINNs}
This section discretizes the residual formulations of section~\ref{Sec2.3_residuals} and introduces the adaptive procedure built on them. The trial function (section~\ref{Sec3.1_trial_enrich}) is a residual $\tanh$ network on a multiscale Fourier-feature embedding. For the singular benchmarks it is preceded by an enrichment that encodes the known behavior at the singular points in $\mathcal{S}^{*}$. The variational residual (section~\ref{Sec3.2_testspace_quadratures}) is tested against Legendre bubble functions on a fixed rectangular partition and evaluated by Gauss-Legendre quadrature. As these functions vanish on every element boundary, including $\partial\Omega$, the Neumann data are imposed by a penalty in the composite loss (section~\ref{Sec3.3_collcation_&_composite_loss}). The loss also contains a collocation term built on the strong-form
residual. This term makes the method hybrid. Unlike the variational residual, the strong-form residual can be evaluated at arbitrary points, so it drives the insertion of collocation points by RAR (section~\ref{Sec3.4_rar_collocations}). Section~\ref{Sec3.5_algorithm_&_scope} states the complete algorithm, in which only the collocation set is adapted, together with the optimization settings and the training budgets.

\subsection{Trial function and input enrichment}\label{Sec3.1_trial_enrich}
As in PINNs and VPINNs~\cite{Raissi2019,Kharazmi2021}, the trial functions are neural networks. Here the trial function is the composition
\begin{equation}
\hat{u}(\cdot\,;\bm{\theta})=c_{u}\,\mathcal{F}_{\bm{\theta}}\circ\gamma\circ\Psi,
\qquad
\Psi:\overline{\Omega}\to\mathbb{R}^{n_{\mathrm{in}}},\quad
\gamma:\mathbb{R}^{n_{\mathrm{in}}}\to\mathbb{R}^{2Sn_{F}},\quad
\mathcal{F}_{\bm{\theta}}:\mathbb{R}^{2Sn_{F}}\to\mathbb{R},
\label{eq:enrichmap}
\end{equation}
of an enrichment map $\Psi$, a multiscale Fourier-feature embedding $\gamma$~\cite{Tancik2020,WangWangPerdikaris2021} and a residual $\tanh$ network $\mathcal{F}_{\bm{\theta}}$. The constant $c_{u}$ fixes the output scale. It equals $500$ for the Motz problem, where the network represents $u/500$ and $1$ otherwise.

\textbf{Enrichment:} The maps $\Psi$ are listed in table~\ref{tab:enrich}. Without enrichment,
$\Psi=\mathrm{id}$ and $n_{\mathrm{in}}=2$. Every fractional power is taken of a floored radius,
\begin{equation}
r_{\tau}=\bigl(\max\{r^{2},\tau\}\bigr)^{1/2},
\qquad \tau\le10^{-28},
\label{eq:floor}
\end{equation}
which keeps $\Psi$ finite and differentiable at the singular points themselves. This floor only safeguards the evaluation of $\Psi$. It is unrelated to and many orders of magnitude smaller than, the guard radius $r_{\mathrm{g}}$ of~\eqref{eq:guard}.

\begin{table}[t]
\caption{The enrichment map $\Psi$ of~\eqref{eq:enrichmap}. Here $r_{i}$ is the distance to the $i$-th re-entrant corner. For the crack, $(r,\theta)$ are polar coordinates centered at the tip, with $\theta\in(0,2\pi)$ and the branch cut on the slit. For the Motz problem, $(r,\theta)$ are centered at the Dirichlet-Neumann junction, with $\theta\in[0,\pi]$. In the semilinear row, $r$ is the distance to the re-entrant corner at the origin. Every radius is evaluated through~\eqref{eq:floor}.}
\label{tab:enrich}
\centering\small
\begin{tabular}{@{}lll@{}}
\toprule
Problem & $\Psi(\bm{x})$ & $n_{\mathrm{in}}$\\
\midrule
Crack (\S\ref{creak_slit_PDE1})
 & $\bigl(x,\,y,\,r^{1/2}\cos\tfrac{\theta}{2},\,r^{1/2}\sin\tfrac{\theta}{2}\bigr)$ & 4\\[2pt]
Cross (\S\ref{plus_shaped_PDE2})
 & $\bigl(x,\,y,\,r_{1}^{2/3},\,r_{2}^{2/3},\,r_{3}^{2/3},\,r_{4}^{2/3}\bigr)$ & 6\\[2pt]
Reaction-diffusion (\S\ref{reaction_diffusion_PDE4}) & $(x,\,y)$ & 2\\[2pt]
Semilinear (\S\ref{semilinear_PDE5}) & $\bigl(x,\,y,\,r^{2/3}\bigr)$ & 3\\[2pt]
Motz (\S\ref{motz_PDE6}) & $\bigl(x,\,y,\,r^{1/2}\cos\tfrac{\theta}{2}\bigr)$ & 3\\
\bottomrule
\end{tabular}
\end{table}
\textbf{Embedding and network:} Fix scales $\sigma_{1},\dots,\sigma_{S}>0$ and matrices
$\bm{B}^{(s)}\in\mathbb{R}^{n_{\mathrm{in}}\times n_{F}}$ whose entries are independent draws from the normal distribution with mean $0$ and variance $\sigma_{s}^{2}$. They are sampled once and then held fixed, they are not trained. With $\sin$ and $\cos$ acting componentwise,
\begin{equation}
\gamma(\bm{z})=\Bigl(
\sin\!\bigl(\bm{B}^{(1)\top}\bm{z}\bigr)^{\!\top},
\cos\!\bigl(\bm{B}^{(1)\top}\bm{z}\bigr)^{\!\top},\dots,
\sin\!\bigl(\bm{B}^{(S)\top}\bm{z}\bigr)^{\!\top},
\cos\!\bigl(\bm{B}^{(S)\top}\bm{z}\bigr)^{\!\top}\Bigr)^{\!\top}.
\label{eq:fourier}
\end{equation}
In every experiment $n_{F}=64$ and $S=3$, giving $\gamma(\bm{z})\in\mathbb{R}^{384}$. The network has $L$ hidden layers of uniform width $n_{H}$. With $\bm{h}^{(0)}=\gamma(\Psi(\bm{x}))$,
\begin{equation}
\bm{h}^{(1)}=\tanh\!\bigl(\bm{W}^{(1)}\bm{h}^{(0)}+\bm{b}^{(1)}\bigr),\qquad
\bm{h}^{(i)}=\bm{h}^{(i-1)}+\tanh\!\bigl(\bm{W}^{(i)}\bm{h}^{(i-1)}+\bm{b}^{(i)}\bigr),
\quad i=2,\dots,L,
\label{eq:mlp}
\end{equation}
\begin{equation}
\mathcal{F}_{\bm{\theta}}(\bm{h}^{(0)})=\bm{W}^{(L+1)}\bm{h}^{(L)}+b^{(L+1)},
\label{eq:output}
\end{equation}
where $\bm{W}^{(1)}\in\mathbb{R}^{n_{H}\times2Sn_{F}}$, $\bm{W}^{(i)}\in\mathbb{R}^{n_{H}\times n_{H}}$ for $2\le i\le L$, $\bm{b}^{(i)}\in\mathbb{R}^{n_{H}}$ for $1\le i\le L$, $\bm{W}^{(L+1)}\in\mathbb{R}^{1\times n_{H}}$ and $b^{(L+1)}\in\mathbb{R}$. The trainable parameters are $\bm{\theta}=\{\bm{W}^{(i)},\bm{b}^{(i)}\}_{i=1}^{L}\cup\{\bm{W}^{(L+1)},b^{(L+1)}\}$. The skip connection acts only between hidden layers of equal width. Weights are initialized by the Xavier normal rule~\cite{Tancik2020} and biases by zero. All computations are in double precision. A single fixed seed governs $\{\bm{B}^{(s)}\}$, the initialization
of $\bm{\theta}$ and every random point set of sections~\ref{Sec3.3_collcation_&_composite_loss}
and~\ref{Sec3.4_rar_collocations}.

Since $\gamma$ and $\mathcal{F}_{\bm{\theta}}$ are $C^{\infty}$, the regularity of $\hat{u}$ is that of $\Psi$. Hence $\hat{u}\in C^{\infty}(\overline{\Omega}\setminus\mathcal{S}^{*})$ and near a singular point $p\in\mathcal{S}^{*}$ the trial function carries the algebraic behavior $r^{\lambda_{1}(p)}$ of~\eqref{eq:reg}. The consequences for the strong-form residual are recorded in remark~\ref{rem:notsmooth}.

\begin{remark}[Status of the enrichment]\label{rem:crackenrich}
The crack pair $(s_{1},s_{2})=(r^{1/2}\cos\tfrac{\theta}{2},\,r^{1/2}\sin\tfrac{\theta}{2})$
serves two purposes, which should be kept apart. First, $s_{2}$ removes a topological obstruction. The exact solution takes different values on the two faces of the slit, whereas any function continuous in $(x,y)$ takes arbitrarily close values at points on opposite faces. As $\theta\to0^{+}$ and $\theta\to2\pi^{-}$, $s_{2}\to0^{+}$ and $s_{2}\to0^{-}$, so the map $\bm{x}\mapsto(s_{1},s_{2})$ is continuous on $\Omega$ and separates the faces. This is the discrete counterpart of the two independent traces of section~\ref{Sec2.1_model_weak}. The function $s_{2}$ is not a mode of the (N,N) wedge at $\omega=2\pi$, whose angular functions are cosines by~\eqref{eq:eigenpairs}, it is supplied for the reason just given. Second, $s_{1}$ is the leading singular mode at the tip and for the exact solution of section~\ref{creak_slit_PDE1} it coincides with $u$. The enrichment is therefore not a neutral change of coordinates rather, it is a singular function enrichment of the trial space, as in the singular function method~\cite{FixGulatiWakoff1973}, extended finite element methods~\cite{Moes1999} and enriched neural solvers for crack problems~\cite{Gu2023,Zhu2025}. It differs from those constructions, including the singularity enriched PINNs of~\cite{HuJinZhou2024}, in one respect: the mode is not added as a separate term with its own coefficient instead, it is passed through~\eqref{eq:fourier} and~\eqref{eq:mlp}, so $\hat{u}$ depends on it nonlinearly.

The corner features $r^{2/3}$ have the same status. The Motz feature supplies only the leading mode of~\eqref{eq:kondratev}, the higher modes must be learned. Section~\ref{Sec5_neumerical_examples} measures the effect of adaptive point placement \emph{given} the enrichment, which is identical in the two compared arms.
\end{remark}

\subsection{Test space, variational residual and quadrature}\label{Sec3.2_testspace_quadratures}
\textbf{Mesh and reference element:} Let $\mathcal{E}=\{\Omega_{e}\}_{e=1}^{N_{e}}$ be a partition of $\Omega$ into $N_{e}$ non-overlapping, axis-aligned open rectangles
$\Omega_{e}=(x^{e}_{a},x^{e}_{b})\times(y^{e}_{a},y^{e}_{b})$ with
$\overline{\Omega}=\bigcup_{e=1}^{N_{e}}\overline{\Omega}_{e}$. Let
$\hat{\Omega}=[-1,1]^{2}$ denote the reference element with coordinates
$(\xi,\eta)$. Each element is the image of $\hat{\Omega}$ under the affine map
\begin{equation}
F_{e}(\xi,\eta)=\bigl(x^{e}_{a}+J^{e}_{x}(1+\xi),\;y^{e}_{a}+J^{e}_{y}(1+\eta)\bigr),
\qquad
J^{e}_{x}=\tfrac{1}{2}(x^{e}_{b}-x^{e}_{a}),\quad
J^{e}_{y}=\tfrac{1}{2}(y^{e}_{b}-y^{e}_{a}),
\label{eq:affinemap}
\end{equation}
so that $F_{e}(\hat{\Omega})=\overline{\Omega}_{e}$. The Jacobian matrix of
$F_{e}$ is $\operatorname{diag}(J^{e}_{x},J^{e}_{y})$ where $J^{e}_{x}$ and
$J^{e}_{y}$ are half the side lengths of $\Omega_{e}$ and $d\bm{x}=J^{e}_{x}J^{e}_{y}\,d\xi\,d\eta$.

The partition is chosen so that no point of $\mathcal{S}^{*}$ lies in an element interior. The re-entrant corner of the semilinear problem and the Motz junction are element vertices, the four re-entrant corners of the cross are vertices of the two side elements and lie inside the vertical edges of the central bar. For the crack, the slit $\Sigma=\{(x,0):0\le x\le1\}$ of section~\ref{creak_slit_PDE1} lies on an element interface. Hence no element straddles the slit and the polar angle $\theta$ in the exact solution is smooth on every element. We use $N_{e}=3$ for the cross and semilinear problems and $N_{e}=4$ for the crack, reaction-diffusion and Motz problems.

\textbf{Test functions:} On $[-1,1]$ we use the Legendre bubble functions of~\citep{Kharazmi2021},
\begin{equation}
\varphi_{i}(\xi)=P_{i+1}(\xi)-P_{i-1}(\xi),\qquad i=1,\dots,K,
\label{eq:bubble}
\end{equation}
where $P_{j}$ is the Legendre polynomial of degree $j$, thus $\varphi_{i}$ is a
polynomial of degree $i+1$. Since $P_{j}(1)=1$, $P_{j}(-1)=(-1)^{j}$ and
$P_{i+1}'-P_{i-1}'=(2i+1)P_{i}$,
\begin{equation}
\varphi_{i}(\pm1)=0,\qquad \varphi_{i}'=(2i+1)P_{i}.
\label{eq:bubblezero}
\end{equation}
The derivatives of the test functions are therefore available in closed form and the basis is never differentiated numerically. On the reference element we define the tensor-product functions
\begin{equation}
\hat{v}_{ij}(\xi,\eta)=\varphi_{i}(\xi)\,\varphi_{j}(\eta),\qquad 1\le i,j\le K,
\label{eq:reftest}
\end{equation}
and transfer them to $\Omega_{e}$, extended by zero outside the element,
\begin{equation}
v^{e}_{ij}(\bm{x})=
\begin{cases}
\hat{v}_{ij}\bigl(F_{e}^{-1}(\bm{x})\bigr), & \bm{x}\in\Omega_{e},\\[2pt]
0, & \bm{x}\in\Omega\setminus\Omega_{e}.
\end{cases}
\label{eq:localtest}
\end{equation}
The test space is spanned by all such functions,
\begin{equation}
V_{K}=\operatorname{span}\bigl\{v^{e}_{ij}\;:\;1\le e\le N_{e},\ 1\le i,j\le K\bigr\}.
\label{eq:testspace}
\end{equation}
Since distinct elements have disjoint interiors, the functions $v^{e}_{ij}$ are
linearly independent and $\dim V_{K}=N_{e}K^{2}$. Equivalently, $V_{K}$
consists of the functions whose restriction to each $\Omega_{e}$, pulled back
by $F_{e}$, is a polynomial of degree at most $K+1$ in each variable vanishing
on $\partial\hat{\Omega}$. In every experiment $K=8$, which gives $K^{2}=64$
test functions per element and $N_{e}K^{2}\in\{192,256\}$ in total.

\begin{lemma}[Admissibility]\label{lem:conforming}
$V_{K}\subset H^{1}_{0}(\Omega)\subset V$.
\end{lemma}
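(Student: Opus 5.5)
By linearity of $V_{K}$ and of $H^{1}_{0}(\Omega)$, it suffices to show that a single basis function $v^{e}_{ij}$ lies in $H^{1}_{0}(\Omega)$. The second inclusion $H^{1}_{0}(\Omega)\subset V$ is immediate from~\eqref{eq:spaces}. Every element of $H^{1}_{0}(\Omega)$ has zero trace on all of $\partial\Omega$, in particular on $\Gamma_{D}$. On the slit, the closure of $C^{\infty}_{c}(\Omega)$ has zero trace on both faces, so this also holds there. In case~(N) one has $V=H^{1}(\Omega)$ and the inclusion is trivial.

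For the first inclusion, fix $e,i,j$. On $\hat{\Omega}$ the function $\hat{v}_{ij}$ is a polynomial, and by~\eqref{eq:bubblezero} it vanishes on $\partial\hat{\Omega}$, because $\varphi_{i}(\pm1)=0$ kills the edges $\xi=\pm1$ and $\varphi_{j}(\pm1)=0$ kills the edges $\eta=\pm1$. The map $F_{e}$ in~\eqref{eq:affinemap} is affine with invertible diagonal Jacobian. Hence $w:=\hat{v}_{ij}\circ F_{e}^{-1}$ is a polynomial on $\overline{\Omega}_{e}$, smooth up to the boundary, and vanishing on $\partial\Omega_{e}$. I would then show that the zero extension $v^{e}_{ij}$ lies in $H^{1}_{0}(\Omega)$, and I would do this directly rather than through abstract trace theorems.

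\textbf{Step one.} Show that $v^{e}_{ij}$ is Lipschitz on $\mathbb{R}^{2}$ when extended by zero. It is continuous, since $w=0$ on $\partial\Omega_{e}$, and it is piecewise $C^{1}$ on the two closed sets $\overline{\Omega}_{e}$ and $\mathbb{R}^{2}\setminus\Omega_{e}$. Gluing across the common boundary gives a global Lipschitz constant equal to $\max_{\overline{\Omega}_{e}}|\nabla w|$. The weak gradient is therefore $\nabla w\,\mathbf 1_{\Omega_{e}}$. This can be verified by integrating by parts on the rectangle, where the boundary term vanishes because $w|_{\partial\Omega_{e}}=0$.

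\textbf{Step two.} Show that $v^{e}_{ij}$ is approximable by $C^{\infty}_{c}(\Omega)$ functions. The support of $v^{e}_{ij}$ is $\overline{\Omega}_{e}$. Since $\Omega_{e}$ is a rectangle with $\Omega_{e}\subset\Omega$, I would use the cutoff $\psi_{\delta}(t)=\operatorname{sign}(t)\max\{|t|-\delta,0\}$ applied to $w$, or more simply the shrinking dilation $w_{\delta}(\bm{x})=w(\bm{c}_{e}+(1+\delta)(\bm{x}-\bm{c}_{e}))$ about the element center $\bm{c}_{e}$. For each $\delta>0$, $w_{\delta}$ is Lipschitz with compact support inside $\Omega_{e}$. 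Mollifying $w_{\delta}$ produces functions in $C^{\infty}_{c}(\Omega_{e})\subset C^{\infty}_{c}(\Omega)$. Finally, $w_{\delta}\to v^{e}_{ij}$ in $H^{1}$ by the continuity of dilations in $L^{2}$ applied to $v$ and $\nabla v$.

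The main obstacle is the slit domain. Here $\Omega$ is not Lipschitz, and an element $\Omega_{e}$ has an edge lying on the slit $\Sigma$. One must make sure that $v^{e}_{ij}$ belongs to $H^{1}_{0}(\Omega)$, that is, that it has zero trace on both faces, and not merely that it is an $H^{1}$ function with some trace. The dilation argument handles this automatically. Because $\Omega_{e}\subset\Omega$ is an open rectangle lying entirely on one side of $\Sigma$, the approximants are supported compactly inside $\Omega_{e}$ and hence away from $\Sigma$ and from $\partial\Omega$. No trace theorem on the non-Lipschitz $\Omega$ is needed, because all the work is done on the Lipschitz rectangle $\Omega_{e}$.
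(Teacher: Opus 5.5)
Your proof is correct and follows the paper's skeleton. The bubble identity $\varphi_i(\pm1)=0$ gives $v^{e}_{ij}=0$ on $\partial\Omega_e$, the zero extension is continuous, and $H^{1}_{0}(\Omega)\subset V$ follows from \eqref{eq:spaces}.

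Where you differ is in how you obtain membership in $H^{1}_{0}(\Omega)$. The paper says in one line that the continuous piecewise polynomial extension lies in $H^{1}(\Omega)$ with zero trace on $\partial\Omega$. That step tacitly identifies $H^{1}_{0}(\Omega)$ with the kernel of the trace map. The identification is the textbook result for Lipschitz domains, or domains with the segment property, and the slit square is neither. On the crack benchmark it therefore needs an extra localization or capacity argument that the paper does not give.

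You instead work from the definition of $H^{1}_{0}(\Omega)$ as the closure of $C^{\infty}_{c}(\Omega)$. The global Lipschitz bound, which uses the convexity of $\Omega_e$, gives the weak gradient $\nabla w\,\mathbf{1}_{\Omega_e}$. Dilation toward the element center, followed by mollification, then yields approximants in $C^{\infty}_{c}(\Omega_e)\subset C^{\infty}_{c}(\Omega)$. This uses only $\Omega_e\subset\Omega$ and no trace theory on $\Omega$, so it covers the slit domain with no special treatment.

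You touch traces only in the second inclusion, and only in the easy direction. There you need continuity of the one-sided trace on each slit face, which holds because each face bounds a Lipschitz subdomain; it would be worth saying so explicitly. The paper's version buys brevity. Yours is a self-contained argument valid for every partition used in the paper, including the non-Lipschitz crack geometry.
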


\begin{proof}
By~\eqref{eq:bubblezero}, $\hat{v}_{ij}$ vanishes on $\partial\hat{\Omega}$, so
$v^{e}_{ij}$ vanishes on $\partial\Omega_{e}$. Its zero extension is therefore
continuous and piecewise polynomial, hence lies in $H^{1}(\Omega)$ with zero
trace on $\partial\Omega$. The inclusion $H^{1}_{0}(\Omega)\subset V$ holds
by~\eqref{eq:spaces} in both case~(D) and case~(N).
\end{proof}

\textbf{Variational residual:} By Lemma~\ref{lem:conforming}, the variational
residual~\eqref{eq:varresidualfunctional} can be tested with $V_{K}$ directly,
without interface or boundary terms and requires only
$\hat{u}\in H^{1}(\Omega)$.
For $1\le e\le N_{e}$
and $1\le i,j\le K$,
\begin{equation}
\mathcal{R}^{e}_{ij}(\bm{\theta})=\mathcal{R}\bigl(\hat{u};v^{e}_{ij}\bigr)
=\int_{\Omega_{e}}\Bigl(\bm{A}\nabla\hat{u}\cdot\nabla v^{e}_{ij}
+\mathcal{N}(\hat{u})\,v^{e}_{ij}-f\,v^{e}_{ij}\Bigr)\,d\bm{x}.
\label{eq:varres}
\end{equation}
The integral is over $\Omega_{e}$ alone, since $v^{e}_{ij}$ vanishes outside
it. The nonlinear term is present only in the reaction-diffusion and semilinear problems.

\begin{remark}[Loss of the natural Neumann term]\label{rem:noneumann}
Every $v\in V_{K}$ vanishes on $\Gamma_{N}$. The boundary functional
of~\eqref{eq:bilinear},
\begin{equation}
B^{e}_{ij} =\int_{\Gamma_{N}}h\,v^{e}_{ij}\,dS,
\label{eq:bdryfunctional}
\end{equation}
is therefore identically zero and the Neumann data do not enter the discrete
variational residual. This is a property of the bubble
basis~\eqref{eq:bubble}, not of the weak form~\eqref{eq:weak}. A basis that is
active on $\Gamma_{N}$, such as the odd sine and cosine modes
of~\citep{INIVPINN2026}, retains the term. We keep the bubble basis and impose
$h$ through the penalty~\eqref{eq:LN}, which is then the only route by which
$h$ enters the loss.
\end{remark}

\textbf{Quadrature:} Let $\{(\xi_{p},w_{p})\}_{p=1}^{Q}$ be the nodes and weights of the $Q$-point Gauss-Legendre rule on $[-1,1]$. Set $w_{pq}=w_{p}w_{q}$ and
$\bm{x}^{e}_{pq}=F_{e}(\xi_{p},\xi_{q})$ and define the set of quadrature points
\begin{equation}
\mathcal{T}_{Q}=\bigl\{\bm{x}^{e}_{pq}:1\le e\le N_{e},\ 1\le p,q\le Q\bigr\},
\qquad|\mathcal{T}_{Q}|=N_{e}Q^{2}.
\label{eq:quadset}
\end{equation}
By the chain rule and~\eqref{eq:affinemap}, for $\bm{x}\in\Omega_{e}$ with
$(\xi,\eta)=F_{e}^{-1}(\bm{x})$,
\begin{equation}
\partial_{x}v^{e}_{ij}(\bm{x})=\frac{\varphi_{i}'(\xi)\,\varphi_{j}(\eta)}{J^{e}_{x}},
\qquad
\partial_{y}v^{e}_{ij}(\bm{x})=\frac{\varphi_{i}(\xi)\,\varphi_{j}'(\eta)}{J^{e}_{y}}.
\end{equation}
With $\bm{A}=\kappa\bm{I}$ (assumption~\ref{ass1_data}), one Jacobian factor cancels in each stiffness term and~\eqref{eq:varres} is approximated, as
in~\citep{Kharazmi2021}, by
\begin{align}
\widetilde{\mathcal{R}}^{e}_{ij}(\bm{\theta})
&=\kappa J^{e}_{y}\sum_{p,q=1}^{Q}w_{pq}\,\partial_{x}\hat{u}(\bm{x}^{e}_{pq})\,
  \varphi_{i}'(\xi_{p})\varphi_{j}(\xi_{q})
 +\kappa J^{e}_{x}\sum_{p,q=1}^{Q}w_{pq}\,\partial_{y}\hat{u}(\bm{x}^{e}_{pq})\,
  \varphi_{i}(\xi_{p})\varphi_{j}'(\xi_{q})\nonumber\\
&\quad+J^{e}_{x}J^{e}_{y}\sum_{p,q=1}^{Q}w_{pq}
  \Bigl[\mathcal{N}\bigl(\hat{u}(\bm{x}^{e}_{pq})\bigr)-f(\bm{x}^{e}_{pq})\Bigr]
  \varphi_{i}(\xi_{p})\varphi_{j}(\xi_{q}).
\label{eq:discres}
\end{align}
Only first derivatives of $\hat{u}$ enter~\eqref{eq:discres}, they are obtained by automatic differentiation at the points of $\mathcal{T}_{Q}$. For the three linear benchmarks, $f\equiv0$ and $\mathcal{N}\equiv0$, so the last line vanishes.

The quadrature rule uses $Q=24$ for the crack, cross and Motz problems, $Q=48$ for the semilinear problem, whose boundary layer of width $\frac{\varepsilon}{\sqrt{3}}$ must be integrated accurately and $Q=80$ for the reaction-diffusion problem. There the elements have unit side length and the peaks have standard deviation $\sigma_{\mathrm{p}}=0.05$, so an interval of length $3\sigma_{\mathrm{p}}$ centered at any of the four peaks contains roughly eight to ten nodes per direction. The rule integrates exactly the polynomial factors $\varphi_{i}\varphi_{j}$, whose degree in each variable is at most $2K+2\le2Q-1$. The full integrands contain $\hat{u}$ and are not integrated exactly, the resulting quadrature error is the variational crime analyzed for VPINNs in~\citep{BerroneCanutoPintore2022a}.

\subsection{Collocation term, constraints and composite loss}\label{Sec3.3_collcation_&_composite_loss}
The family $\{\widetilde{\mathcal{R}}^{e}_{ij}\}$ is indexed by $(e,i,j)$. It is the action of an element of $V_{K}'$, not a function of position. Consistent with section~\ref{Sec2.3_residuals}, there is no map $\bm{x}\mapsto\widetilde{\mathcal{R}}(\bm{x})$ to sample. A pure VPINNs therefore admits refinement of the test space (more elements, higher degree, additional patches) however, it has no notion of inserting a \emph{point} at which the equation is poorly satisfied. This is why adaptive weak-form methods refine test spaces~\citep{BerronePintore2024,Taylor2024,Udomworarat2026} or meshes~\citep{Badia2025} rather than point sets.

We therefore retain the strong-form residual~\eqref{eq:strongresidualfunction} on a set of collocation points (the penalizing points of~\citep{Kharazmi2019}), $\mathcal{T}_{f}=\{\bm{x}^{f}_{k}\}_{k=1}^{N_{f}}\subset\Omega$,
\begin{equation}
\mathcal{L}_{f}(\bm{\theta};\mathcal{T}_{f})
=\frac{1}{N_{f}}\sum_{k=1}^{N_{f}}\bigl|R(\hat{u})(\bm{x}^{f}_{k})\bigr|^{2},
\qquad R(\hat{u})=-\kappa\Delta\hat{u}+\mathcal{N}(\hat{u})-f.
\label{eq:Lf}
\end{equation}
The second derivatives are obtained by nested automatic differentiation, so the method is not second-derivative free. The term plays two roles. It penalizes pointwise violation of~\eqref{eq:pde}, supplementing the element averaged information in~\eqref{eq:discres}. As it is built on a function of $\bm{x}$, it makes point level adaptivity possible, without $\mathcal{L}_{f}$, the indicator of section~\ref{Sec3.4_rar_collocations} would not exist.

It also changes the size of the discrete system. The variational term supplies
$N_{e}K^{2}\in\{192,256\}$ scalar conditions against $8.9\times10^{4}$ to $2.0\times10^{5}$ trainable parameters. With the collocation term the count becomes $N_{e}K^{2}+N_{f}$, which at the final round lies between $4.2\times10^{3}$ (crack and Motz) and $1.3\times10^{4}$ (reaction-diffusion). The system remains underdetermined in every case.

\textbf{Boundary and data constraints:}\label{sec:penalties}
With $\mathcal{T}_{D}=\{\bm{x}^{D}_{i}\}_{i=1}^{N_{D}}\subset\Gamma_{D}$ and
$\mathcal{T}_{N}=\{\bm{x}^{N}_{j}\}_{j=1}^{N_{N}}\subset\Gamma_{N}$, define
\begin{align}
\mathcal{L}_{D}(\bm{\theta};\mathcal{T}_{D})&=\frac{1}{N_{D}}\sum_{i=1}^{N_{D}}
\bigl|\hat{u}(\bm{x}^{D}_{i};\bm{\theta})-g(\bm{x}^{D}_{i})\bigr|^{2},
\label{eq:LD}\\
\mathcal{L}_{N}(\bm{\theta};\mathcal{T}_{N})&=\frac{1}{N_{N}}\sum_{j=1}^{N_{N}}
\bigl|\kappa\,\partial_{\bm{n}}\hat{u}(\bm{x}^{N}_{j};\bm{\theta})-h(\bm{x}^{N}_{j})\bigr|^{2}.
\label{eq:LN}
\end{align}
These enforce~\eqref{eq:dir} and restore~\eqref{eq:neu}, which was lost in remark~\ref{rem:noneumann}. Both sets are drawn once, uniformly along each boundary segment (150-300 points per segment, depending on the problem) and are never resampled. Since~\eqref{eq:LN} penalizes the flux, its effective weight on
$\partial_{\bm{n}}\hat{u}$ is $w_{N}\kappa^{2}$. For the semilinear problem, where $h\equiv0$ and $\kappa=\varepsilon^{2}$, this is small, about
$3\times10^{-4}$.

For the reaction-diffusion problem, $\Gamma_{D}=\emptyset$ and $\mathcal{L}_{D}$ is vacuous. In its place, with $N_{A}=20$ interior anchors $\mathcal{T}_{A}=\{\bm{x}^{A}_{k}\}_{k=1}^{N_{A}}\subset\Omega$,
\begin{equation}
\mathcal{L}_{A}(\bm{\theta};\mathcal{T}_{A})=\frac{1}{N_{A}}\sum_{k=1}^{N_{A}}
\bigl|\hat{u}(\bm{x}^{A}_{k};\bm{\theta})-u(\bm{x}^{A}_{k})\bigr|^{2},
\label{eq:LA}
\end{equation}
where $u$ is the exact solution of section~\ref{reaction_diffusion_PDE4}. The anchors sit at the four peak centers and at the four points one peak width $\sigma_{\mathrm{p}}$ from each center along the coordinate directions. They select the branch (remark~\ref{remarks1_anchors}) and $\mathcal{L}_{A}$ enters~\eqref{eq:loss} with weight $w_{D}$.

\textbf{Composite loss:}\label{sec:loss}
The training objective is
\begin{align}
\mathcal{L}(\bm{\theta})&=w_{v}\mathcal{L}_{v}+w_{D}\mathcal{L}_{D}
+w_{N}\mathcal{L}_{N}+w_{f}\mathcal{L}_{f},
\label{eq:loss}\\
\mathcal{L}_{v}(\bm{\theta};\mathcal{T}_{Q})&=\frac{1}{N_{e}K^{2}}
\sum_{e=1}^{N_{e}}\sum_{i,j=1}^{K}\bigl|\widetilde{\mathcal{R}}^{e}_{ij}(\bm{\theta})\bigr|^{2},
\label{eq:Lv}
\end{align}
with $\mathcal{L}_{D}$ replaced by $\mathcal{L}_{A}$ for the reaction-diffusion problem. Each term is a mean rather than a sum, so the weights keep their meaning as $N_{f}$ grows during refinement. The weights are $(w_{v},w_{D},w_{N},w_{f})=(1,50,50,10)$ for all problems except reaction-diffusion, where they are $(1,100,50,5)$. They were fixed by preliminary experimentation, were not tuned per run and are identical in the two compared arms.

\subsection{Residual-based adaptive refinement}\label{Sec3.4_rar_collocations}\label{sec:adaptive}
\textbf{Indicator:}
After each training stage, a candidate pool $\mathcal{C}\subset\Omega$ with $|\mathcal{C}|\gg N_{f}$ is drawn uniformly at random and the indicator
\begin{equation}
\rho(\bm{x};\bm{\theta})=\bigl|R(\hat{u})(\bm{x})\bigr|
=\bigl|-\kappa\Delta\hat{u}(\bm{x};\bm{\theta})
+\mathcal{N}\bigl(\hat{u}(\bm{x};\bm{\theta})\bigr)-f(\bm{x})\bigr|,
\qquad\bm{x}\in\mathcal{C},
\label{eq:indicator}
\end{equation}
is evaluated on it. The indicator requires no derivatives beyond those in~\eqref{eq:Lf}. It is not an error estimator and no reliability or efficiency property is claimed for it. Its justification is that it can be evaluated at an arbitrary point, which~\eqref{eq:discres} cannot. Near $\mathcal{S}^{*}$, by remark~\ref{rem:notsmooth}, large values of $\rho$ reflect both approximation error and the $r^{\lambda_{1}-2}$ growth of the second derivatives and $\rho$ alone does not separate the two.

\textbf{Guard:}\label{sec:guard} Where this growth is severe, candidates close to $\mathcal{S}^{*}$ are excluded,
\begin{equation}
\mathcal{C}_{\mathrm{adm}}=\bigl\{\bm{x}\in\mathcal{C}:
\operatorname{dist}(\bm{x},\mathcal{S}^{*})\ge r_{\mathrm{g}}\bigr\}.
\label{eq:guard}
\end{equation}
For the crack, $r_{\mathrm{g}}=2\times10^{-3}$. At this radius
$|\partial_{xx}\hat{u}|,|\partial_{yy}\hat{u}|\sim r^{-3/2}\approx1.1\times10^{4}$,
so the round off error in their sum is of order $10^{-12}$. Inside the disk, the computed $\rho$ would rank points by round off error.

For the Motz problem the growth is genuine rather than numerical. On a trained network, the median of $\rho$ rises monotonically from $4.5\times10^{-4}$ at $r=0.5$ to $56$ at $r=10^{-2}$. Ranking against this five decade spread sends the whole insertion budget to the smallest admissible radius. The pool is therefore restricted by $r_{\mathrm{g}}=5\times10^{-2}$, placed where the median indicator (about $0.5$) is comparable to the 90th percentile at larger radii, this compresses the spread to about three decades. All other interior samples of the Motz problem, namely the initial set and the insertions of the uniform arm, satisfy only $r\ge2\times10^{-3}$. The two Motz arms therefore differ in the region from which insertions may be drawn.

For the semilinear problem, $r_{\mathrm{g}}=2\times10^{-3}$ is applied to every interior sample in both arms. No guard is applied to the cross problem, whose second derivatives grow only like $r^{-4/3}$, nor to the reaction-diffusion problem, whose solution is smooth. Where no guard is used, $\mathcal{C}_{\mathrm{adm}}=\mathcal{C}$.

\textbf{Selection:} Let $\bm{x}_{(1)},\dots,\bm{x}_{(n)}$, with $n=|\mathcal{C}_{\mathrm{adm}}|$, enumerate $\mathcal{C}_{\mathrm{adm}}$ in
nonincreasing order of $\rho$. Given the insertion count $m$, set
\begin{equation}
\mathcal{M}=\{\bm{x}_{(1)},\dots,\bm{x}_{(m)}\},\qquad
\mathcal{T}_{f}\leftarrow\mathcal{T}_{f}\cup\mathcal{M}.
\label{eq:select}
\end{equation}
The pool is redrawn in every round and the procedure is repeated for $n_{\mathrm{RAR}}$ rounds.

\textbf{Radius-normalized selection for the crack:} At the crack tip, $\rho$ grows approximately like $r^{-2.3}$ over four decades on a trained network. In a preliminary run, the ranking~\eqref{eq:select} placed $92\%$ of the first round insertions in $r<0.1$, in tight clusters and the relative $L^{2}$ error increased. For this problem the ranking is therefore replaced by three steps.

First (\emph{radial normalization}), with $r(\bm{x})$ the distance from $\bm{x}$ to the crack tip, $\mathcal{C}_{\mathrm{adm}}$ is divided into $n_{b}=12$ radial bands in $r$ with logarithmically spaced edges between $r_{\mathrm{g}}$ and $1.5$. Each indicator value is normalized by the median over its own band,
\begin{equation}
\hat{\rho}(\bm{x})=\rho(\bm{x})\big/\operatorname{med}\{\rho(\bm{y}):
\bm{y}\in\mathcal{C}_{\mathrm{adm}},\ \bm{y}\text{ in the band of }\bm{x}\}.
\label{eq:bandscore}
\end{equation}

Second (\emph{density sampling}), $\min\{n,\max\{8m,m+1000\}\}$ candidates are drawn without replacement
with probability proportional to $\hat{\rho}(\bm{x})/\overline{\hat{\rho}}+1$, where $\overline{\hat{\rho}}$ is the mean of $\hat{\rho}$ over $\mathcal{C}_{\mathrm{adm}}$. This is the residual-based adaptive distribution of~\citep{Wu2023} with exponent $1$ and offset $1$. Third (\emph{minimum-separation filter}), the drawn candidates are visited in decreasing order of $\hat{\rho}$. A candidate is accepted if it lies at distance at least $0.35\max\{r(\bm{x}),r_{\mathrm{g}}\}$ from every point already accepted. If fewer than $m$ points are accepted, the remaining drawn candidates fill the set.

\textbf{Uniform arm:} The uniform arm, against which the RAR arm is compared, replaces the selection step by $m$ points drawn uniformly from the same admissible region and leaves everything else unchanged; the indicator is not evaluated. The only exception to identical admissible regions is the Motz problem, as described under Guard.

\subsection{Algorithm, scope and training budget}\label{Sec3.5_algorithm_&_scope}
\begin{algorithm}[t]
\caption{VC-PINNs with residual-based adaptive refinement (VC-PINNs-RAR)}
\label{alg:main}
\begin{algorithmic}[1]
\Require data $f,g,h$; network \eqref{eq:enrichmap}-\eqref{eq:output};
partition $\mathcal{E}$ and test space $V_{K}$ \eqref{eq:testspace};
quadrature set $\mathcal{T}_{Q}$; point sets
$\mathcal{T}_{D},\mathcal{T}_{N},\mathcal{T}_{f}$ (and $\mathcal{T}_{A}$);
weights $w_{v},w_{D},w_{N},w_{f}$; rounds $n_{\mathrm{RAR}}$; insertion count
$m$; pool size $|\mathcal{C}|$; guard radius $r_{\mathrm{g}}$; iteration
counts $N^{0}_{\mathrm{Adam}},N_{\mathrm{Adam}},N_{\mathrm{LBFGS}}$.
\State Initialize $\bm{\theta}$.
\State Train $N^{0}_{\mathrm{Adam}}$ Adam iterations on \eqref{eq:loss}, then
up to $N_{\mathrm{LBFGS}}$ L-BFGS iterations.
\For{$k=1,\dots,n_{\mathrm{RAR}}$}
  \State Draw $\mathcal{C}\subset\Omega$ and form $\mathcal{C}_{\mathrm{adm}}$ by \eqref{eq:guard}.
  \State Evaluate $\rho$ on $\mathcal{C}_{\mathrm{adm}}$ by \eqref{eq:indicator}.
  \State Select $\mathcal{M}$ by \eqref{eq:select} (crack: by \eqref{eq:bandscore} and the steps following it); set $\mathcal{T}_{f}\leftarrow\mathcal{T}_{f}\cup\mathcal{M}$.
  \State Train $N_{\mathrm{Adam}}$ Adam iterations, then up to $N_{\mathrm{LBFGS}}$ L-BFGS iterations, on the updated \eqref{eq:loss}.
\EndFor
\State \Return $\hat{u}(\cdot;\bm{\theta})$.
\end{algorithmic}
\end{algorithm}

Algorithm~\ref{alg:main} is written in execution order. In each round the pool is scored and $\mathcal{T}_{f}$ enlarged, using the parameters left by the previous stage. The algorithm acts on $\mathcal{T}_{f}$ and hence on $\mathcal{L}_{f}$ and on nothing else. The partition $\mathcal{E}$, the number $K$ of test functions per direction, the quadrature set $\mathcal{T}_{Q}$, the sets $\mathcal{T}_{D},\mathcal{T}_{N},\mathcal{T}_{A}$, the architecture and the
weights are fixed for the whole run. In particular, for given $\bm{\theta}$, $\mathcal{L}_{v}$ is the same function in the last round as in the first. The method performs neither $h$- nor $p$-refinement.

The loss~\eqref{eq:loss} is minimized by Adam~\cite{Kingma2015} followed by L-BFGS~\cite{LiuNocedal1989} with a strong Wolfe line search~\cite{NocedalWright2006} in every stage. Adam is restarted at each stage, with learning rate $10^{-3}$ in the first stage and $10^{-4}$ thereafter, exponential decay by a factor of $0.9$ per $5000$ iterations and gradient norm clipping at $1$. L-BFGS uses a history of $50$, a gradient tolerance of $10^{-14}$ and a change tolerance of $10^{-20}$ and is capped at $N_{\mathrm{LBFGS}}$ iterations per stage. Training ends when the prescribed stages are complete. There is no early stopping and all reported errors are those of the final iterate.

The two arms of every benchmark share the seed, the architecture, the weights, the partition, the test space $V_{K}$ and the quadrature rule. They also share the collocation schedule: $N_{f}^{0}$ initial points, $m$ insertions in each of $n_{\mathrm{RAR}}$ rounds and hence the same $N_{f}$ in every round. Finally, they share the optimizer schedule: $N^{0}_{\mathrm{Adam}}+n_{\mathrm{RAR}} N_{\mathrm{Adam}}$ Adam iterations and $1+n_{\mathrm{RAR}}$ L-BFGS stages. The arms therefore differ only in how the $m$ points of each round are chosen and for the Motz problem, in the admissible region. The problem specific values are given with each benchmark in section~\ref{Sec5_neumerical_examples}.

\section{Architecture of the VC-PINNs-RAR framework}\label{Sec4_airchitecture}
Figure~\ref{fig:vcpinn-rar-architecture} summarizes the proposed VC-PINNs-RAR framework. It assembles the four components of section~\ref{Sec3_hybrid-VC-PINNs} a neural trial function, a discrete variational residual tested against local polynomials on a partitioned domain, a composite loss carrying the mixed boundary data and the refinement loop. In the forward pass, the coordinate $\bm{x}\in\Omega$ is enriched, mapped by the multiscale Fourier-feature embedding and propagated to $\hat{u}(\bm{x};\bm{\theta})$. Automatic differentiation supplies $\nabla\hat{u}$ for the variational residual and $\Delta\hat{u}$ for the strong-form residual. The weighted sum of both residuals and the boundary penalties is minimized by Adam followed by L-BFGS. Between stages, the loop augments the interior collocation set and training continues from the current parameters.

The trial function $\hat{u}=c_{u}\,\mathcal{F}_{\bm{\theta}}\circ\gamma\circ\Psi$ composes a problem specific enrichment $\Psi$, the multiscale Fourier-feature embedding $\gamma$ of~\eqref{eq:fourier} and the residual $\tanh$ network $\mathcal{F}_{\bm{\theta}}$ of~\eqref{eq:mlp}-\eqref{eq:output}, whose hidden layers carry skip connections $\bm{h}^{(i)}=\bm{h}^{(i-1)}+\tanh(\cdot)$. The frequency matrices $\bm{B}^{(s)}$, whose entries are independent draws from the normal distribution with mean $0$ and variance $\sigma_{s}^{2}$, are sampled once at $S=3$ scales and then held fixed, so they belong to the architecture rather than to $\bm{\theta}$. The multiple scales reduce the spectral bias of the network~\cite{Rahaman2019,Tancik2020,WangWangPerdikaris2021} and sharpen its resolution of localized features. All problems are steady, so the input is the spatial coordinate alone. Where the corner behavior is known, $\Psi$ appends the leading singular mode: $r^{2/3}$ on the cross-shaped and L-shaped domains, $r^{1/2}\cos(\theta/2)$ for the Motz problem and the pair $\bigl(r^{1/2}\cos(\theta/2),\,r^{1/2}\sin(\theta/2)\bigr)$ on the slit domain. The pair also carries the jump across the crack faces, which the plain coordinates cannot represent.

Following the Petrov-Galerkin construction, the test functions are drawn from $V_{K}\subset V$ of \eqref{eq:testspace}. The domain is partitioned into nonoverlapping elements, $\overline{\Omega}=\bigcup_{e}\overline{\Omega}_{e}$ and on each element the test functions are the tensor products~\eqref{eq:localtest} of the one-dimensional Legendre bubble functions $\varphi_{1},\dots,\varphi_{K}$ of~\eqref{eq:bubble}. The partition, the number of test functions per direction $K=8$ and the quadrature order are fixed for the entire run. Since $\varphi_{i}(\pm1)=0$, every test function vanishes on $\partial\Omega_{e}$, so no interelement or boundary term survives and the natural boundary functional vanishes identically, $B^{e}_{ij}\equiv0$ on $V_{K}$ (remark~\ref{rem:noneumann}). Testing therefore leaves the three term residual~\eqref{eq:discres}, split as
\begin{equation}\label{eq:res-terms}
  \widetilde{\mathcal{R}}^{e}_{ij}(\bm{\theta})
  = \widetilde{\mathcal{R}}^{e,\mathrm{diff}}_{ij}(\bm{\theta}) + \widetilde{\mathcal{R}}^{e,\mathrm{reac}}_{ij}(\bm{\theta}) - \widetilde{\mathcal{R}}^{e,\mathrm{src}}_{ij},
\end{equation}
with $\widetilde{\mathcal{R}}^{e,\mathrm{diff}}_{ij}$ the diffusion contribution, $\widetilde{\mathcal{R}}^{e,\mathrm{reac}}_{ij}$ that of the nonlinear reaction term (absent when the problem is linear) and $\widetilde{\mathcal{R}}^{e,\mathrm{src}}_{ij}$ the source. Element integrals use the tensor-product Gauss-Legendre rule~\eqref{eq:quadset} with $Q>K$ nodes per direction: $Q=24$ by default, $Q=48$ where a boundary layer must be integrated and $Q=80$ for the localized peaks of the reaction-diffusion problem.

Neither boundary condition is natural in the discrete formulation, so both are imposed by explicit penalties in \eqref{eq:loss}: $\mathcal{L}_{D}$ on $\mathcal{T}_{D}$ and $\mathcal{L}_{N}$ on $\mathcal{T}_{N}$, the latter penalizing the flux $\kappa\,\partial_{\bm{n}}\hat{u}-h$. With $w_{v}$ for the variational residual and $w_{f}$ for the collocation residual, the formulation carries four weights, given in section~\ref{sec:loss}. The parameters are obtained by minimizing \eqref{eq:loss} with Adam followed by L-BFGS with a strong Wolfe line search, one such pair per training stage.

The quadrature set $\mathcal{T}_{Q}$ is fixed and the only object the refinement modifies is the interior collocation set $\mathcal{T}_{f}$. After each stage, the indicator $\rho=|R(\hat{u})|$ of~\eqref{eq:indicator} is evaluated on a candidate pool $\mathcal{C}\subset\Omega$ to identify where the governing equation is least well satisfied. The strong-form residual is used since it can be evaluated at an arbitrary point, whereas the variational residual is indexed by element and test mode moreover, points inserted into $\mathcal{T}_{f}$ act on training through $\mathcal{L}_{f}$. Where the singular growth of the second derivatives is severe, candidates inside the guard radius $r_{\mathrm{g}}$ of the singular point are discarded, as there the indicator grows toward the singular point regardless of the local error (remark~\ref{rem:notsmooth} and section~\ref{sec:guard}). The $m$ highest ranked admissible points, selected by~\eqref{eq:select}, are appended to $\mathcal{T}_{f}$ and training continues for $n_{\mathrm{RAR}}$ rounds (algorithm~\ref{alg:main}). The uniform arm differs only in this selection step: it draws the same number of points uniformly from the same admissible region and reaches the same final $N_{f}$ under the same optimizer schedule. The two exceptions, the admissible region of the Motz problem and the uniform arm of the reaction-diffusion problem, are stated in section~\ref{Sec3.5_algorithm_&_scope}. section~\ref{Sec5_neumerical_examples} compares the two arms.

\begin{figure}[!t]
\centering
\resizebox{\textwidth}{!}{%
\begin{tikzpicture}[
    >=Stealth,
    font=\small,
    layerbox/.style   = {draw=slate, rounded corners, fill=slatefill,
                         minimum width=1.9cm, minimum height=0.9cm,
                         align=center},
    varblock/.style   = {draw=ochre, thick, rounded corners, fill=ochrefill,
                         minimum width=5.4cm, minimum height=1.4cm,
                         align=center},
    resblock/.style   = {draw=plum, thick, rounded corners, fill=plumfill,
                         minimum width=4.6cm, minimum height=1.3cm,
                         align=center},
    collblock/.style  = {draw=plum, thick, rounded corners, fill=plumfill,
                         minimum width=4.6cm, minimum height=1.3cm,
                         align=center},
    adbox/.style      = {draw=plum, thick, rounded corners, fill=plumfill,
                         minimum width=2.3cm, minimum height=1.5cm,
                         align=center},
    tfnode/.style     = {draw=ochre, circle, fill=ochrefill,
                         minimum size=11mm, inner sep=0pt},
    tfopt/.style      = {draw=ochre, rounded corners, fill=ochrefill,
                         minimum width=3.6cm, minimum height=2.0cm,
                         align=center},
    psibox/.style     = {draw=slate, rounded corners, fill=slatefill,
                         minimum width=0.9cm, minimum height=1.5cm,
                         align=center},
    fourier/.style    = {draw=slate, rounded corners, fill=slatefill,
                         minimum width=0.9cm, minimum height=1.9cm,
                         align=center},
    neuron/.style     = {circle, draw=slate, fill=slatefill,
                         minimum size=7mm, inner sep=0pt,
                         font=\scriptsize},
    inpneuron/.style  = {circle, draw=slate, fill=slatefill,
                         minimum size=8mm, inner sep=0pt},
    lossstyle/.style  = {ellipse, draw=moss, very thick, fill=moss,
                         text=white, minimum width=5.6cm,
                         minimum height=1.6cm, align=center,
                         font=\bfseries},
    rarblock/.style   = {draw=rust, thick, rounded corners, fill=rustfill,
                         minimum width=3.9cm, minimum height=2.1cm,
                         align=center},
    flow/.style       = {->, thick, slate},
    varflow/.style    = {->, thick, ochre!80!black},
    opflow/.style     = {->, thick, plum},
    lossflow/.style   = {->, thick, moss},
    rarflow/.style    = {->, very thick, rust},
    hop/.style        = {fill=figpaper, draw=none,
                         minimum width=0.2cm, minimum height=0.3cm,
                         inner sep=0pt}
]

% ---------------- background ----------------
\fill[figpaper] (-3.4,-7.8) rectangle (20.2,8.6);

% ---------------- title ----------------
\node[font=\Large\bfseries, slate] at (8.4,7.7)
    {Variational-collocation PINNs with residual-based adaptive
     refinement (VC-PINNs-RAR)};

\node[layerbox] (input)   at (0,6.0)   {Input\\$\bm{x}$};
\node[layerbox] (hidden)  at (2.6,6.0) {Residual\\hidden layers};
\node[layerbox] (output)  at (5.2,6.0) {Output\\$\hat{u}$};
\node[varblock] (varform) at (10.0,6.0)
    {Weak form on $V_K$\\[2pt]
     $a(\hat{u},v)+\big(\mathcal{N}(\hat{u}),v\big)_{\Omega}=\ell(v)$};
\node[layerbox] (testf)   at (15.0,6.0) {Test\\functions $v^{e}_{ij}$};

\draw[flow] (input)  -- (hidden);
\draw[flow] (hidden) -- (output);
\draw[flow] (output) -- (varform);
\draw[varflow] (testf) -- (varform);

\node[draw=slate, rounded corners, fill=slatefill!35,
      minimum width=7.0cm, minimum height=4.4cm] (nnbox) at (2.6,2.0) {};

\node[inpneuron] (x)     at (-0.3,2.0) {$\bm{x}$};
\node[psibox]    (psi)   at ( 0.8,2.0) {$\Psi$};
\node[fourier]   (gamma) at ( 1.9,2.0) {$\gamma$};

\foreach \i in {1,...,5}
    \node[neuron] (h1\i) at (3.1,3.7-\i*0.55) {$\tanh$};
\foreach \i in {1,...,4}
    \node[neuron] (h2\i) at (4.3,3.4-\i*0.55) {$\tanh$};

\node[neuron, minimum size=11mm, font=\scriptsize] (u) at (5.7,2.0)
    {$\hat{u}(\bm{x};\bm{\theta})$};

\draw[slate!70] (x) -- (psi);
\draw[slate!70] (psi) -- (gamma);
\foreach \i in {1,...,5} \draw[slate!60] (gamma) -- (h1\i);
\foreach \i in {1,...,5}
    \foreach \j in {1,...,4} \draw[slate!35] (h1\i) -- (h2\j);
\foreach \j in {1,...,4} \draw[slate!60] (h2\j) -- (u);

\draw[->, slate, thick]
    (h11.north) to[out=70,in=110] node[above, font=\scriptsize, slate]
    {skip: $\bm{h}^{(i)}=\bm{h}^{(i-1)}+\tanh(\cdot)$} (h21.north);

\node[font=\scriptsize, slate, align=center] at (2.6,-0.45)
    {$\hat{u}=c_{u}\,\mathcal{F}_{\bm{\theta}}\circ\gamma\circ\Psi$};

\node[adbox] (ad) at (7.9,2.0)
    {\textbf{\footnotesize Automatic}\\\textbf{\footnotesize differentiation}\\[1pt]
     $\nabla\hat{u}$\\[1pt] $\Delta\hat{u}$};
\draw[flow] (u) -- (ad);

\node[resblock] (res) at (11.8,3.3)
    {\textbf{Variational residual}\\[2pt]
     {\footnotesize$\widetilde{\mathcal{R}}^{e}_{ij}=\widetilde{\mathcal{R}}^{e,\mathrm{diff}}_{ij}+\widetilde{\mathcal{R}}^{e,\mathrm{reac}}_{ij}-\widetilde{\mathcal{R}}^{e,\mathrm{src}}_{ij}$}};
\node[font=\scriptsize, plum!80!black] at (12.6,2.25)
    {$B^{e}_{ij}\equiv0$ on $V_K$};

\draw[opflow] (ad.north) |- (res.west);

\node[collblock] (coll) at (12.8,0.3)
    {\textbf{Strong-form residual} on $\mathcal{T}_{f}$\\[2pt]
     $R(\hat{u})=-\kappa\Delta\hat{u}+\mathcal{N}(\hat{u})-f$};

\node[tfnode] (gradvk) at (15.3,4.1) {$\nabla v^{e}_{ij}$};
\node[tfnode] (vk)     at (15.3,2.5) {$v^{e}_{ij}$};
\node[tfopt]  (leg)    at (18.1,3.3)
    {Legendre bubbles\\[2pt]
     $\varphi_{i}=P_{i+1}-P_{i-1}$\\
     $\hat{v}_{ij}=\varphi_{i}\otimes\varphi_{j}$\\
     on fixed $\mathcal{E}$, $K$, $\mathcal{T}_{Q}$};

\draw[varflow] (leg.west) -- (vk.east);
\draw[varflow] (vk)  -- (gradvk);
\draw[varflow] (vk.west)     -- ([yshift=-4mm]res.east);
\draw[varflow] (gradvk.west) -- ([yshift= 4mm]res.east);

\node[lossstyle] (loss) at (7.4,-2.4)
    {Composite loss $\mathcal{L}(\bm{\theta})$\\[1pt]
     {\footnotesize$w_{v}\mathcal{L}_{v}+w_{D}\mathcal{L}_{D}
      +w_{N}\mathcal{L}_{N}+w_{f}\mathcal{L}_{f}$}};

\draw[lossflow] ([xshift=5mm]res.south west) coordinate (vtop)
    -- ++(0,-3.5) to[out=-90,in=60] (loss.north east);

\draw[lossflow]
    let \p1 = (u.south), \p2 = (loss.center),
        \p3 = (loss.east), \p4 = (loss.north),
        \n1 = {\x3-\x2}, \n2 = {\y4-\y2},
        \n3 = {\n2*sqrt(1-((\x1-\x2)/\n1)^2)}
    in (\p1) -- (\x1,\y2+\n3);
\node[font=\scriptsize\itshape, moss, align=right, anchor=east]
    at (5.5,-0.95)
    {$\mathcal{L}_{D}$, $\mathcal{L}_{N}$\\ on
     $\mathcal{T}_{D},\mathcal{T}_{N}$};

\draw[lossflow] (coll.south) to[out=-90,in=0] (loss.east);

\node[hop] at (vtop |- coll.west) {};
\draw[opflow] (ad.south) |- (coll.west);
\node[font=\scriptsize, plum, anchor=east] at (7.8,0.75) {nested};

% ================ RAR pipeline ================
\node[rarblock] (rar1) at (15.8,-5.2)
    {Indicator on pool $\mathcal{C}\subset\Omega$\\[2pt]
     $\rho=\big|R(\hat{u})\big|$};
\node[rarblock] (rar2) at (11.1,-5.2)
    {Guard \& rank\\[2pt]
     $\mathcal{C}_{\mathrm{adm}}=\{\operatorname{dist}(\bm{x},\mathcal{S}^{*})\ge r_{\mathrm{g}}\}$\\
     top-$m$ $\rightarrow\mathcal{M}$};
\node[rarblock] (rar3) at (6.4,-5.2)
    {Augment collocation set\\[2pt]
     $\mathcal{T}_{f}\leftarrow\mathcal{T}_{f}\cup\mathcal{M}$};

\draw[rarflow, rounded corners=6pt]
    (coll.east) -| (rar1.north);

\draw[rarflow] (rar1) -- (rar2);
\draw[rarflow] (rar2) -- (rar3);

\draw[rarflow, rounded corners=8pt]
    (rar3.west) -| (-2.2,2.0) -- (nnbox.west);
\node[font=\scriptsize\itshape, rust, align=center] at (-1.35,-1.4)
    {retrain on\\augmented\\ $\mathcal{T}_{f}$};

\node[draw=rust, dashed, thick, rounded corners,
      fit=(rar1)(rar2)(rar3), inner sep=11pt,
      label={[rust, font=\bfseries]below:%
             RAR: adaptive collocation ($\mathcal{T}_{f}$ only)}]
      (rarbox) {};

% ---------------- outer frame ----------------
\node[draw=slate, dashed, thick, rounded corners,
      fit={(-3.2,-7.6) (20.0,8.4)}, inner sep=3pt] {};
\end{tikzpicture}%
}
\caption{Architecture of the proposed framework. A multiscale Fourier-feature embedding $\gamma$, whose frequency matrices are drawn once and held fixed, acts on the enriched coordinate $\bm{z}=\Psi(\bm{x})$ and feeds a residual $\tanh$ network whose output, scaled by $c_{u}$, is the trial function $\hat{u}(\bm{x};\bm{\theta})$, $\Psi$ appends the leading singular mode where the corner behavior is known and is the identity otherwise. One application of automatic differentiation supplies $\nabla\hat{u}$ for the variational residual $\widetilde{\mathcal{R}}^{e}_{ij}$, assembled on the fixed quadrature set $\mathcal{T}_{Q}$ against the Legendre bubble test functions $v^{e}_{ij}$, a
second, nested application supplies $\Delta\hat{u}$ for the strong-form residual $R(\hat{u})$ on the collocation set $\mathcal{T}_{f}$. As every $v^{e}_{ij}$ vanishes on $\partial\Omega_{e}$, the natural boundary functional $B^{e}_{ij}$ vanishes identically and the Neumann data reach the loss only through the penalty $\mathcal{L}_{N}$. The composite loss is the weighted sum of the variational residual, the collocation term and the two boundary penalties. The RAR loop branches from the strong-form residual, the only loss component that can be evaluated at an arbitrary interior point. It scores a candidate pool $\mathcal{C}$, discards candidates inside the guard radius $r_{\mathrm{g}}$, inserts the $m$ highest ranked points into $\mathcal{T}_{f}$ and continues training from the current parameters. The element partition, the number $K$ of test functions per direction and the quadrature set $\mathcal{T}_{Q}$ are fixed throughout, so the scheme performs neither $h$- nor $p$-refinement, adaptivity acts on $\mathcal{T}_{f}$ alone.}
\label{fig:vcpinn-rar-architecture}
\end{figure}

% Neumerical Examples Block %
\section{Numerical examples}\label{Sec5_neumerical_examples}
We compare the VC-PINNs with uniform insertion (the uniform arm) and with residual-based adaptive refinement (VC-PINNs-RAR, the RAR arm) on five second-order elliptic benchmarks whose solutions are singular, nearly singular, or strongly localized. The first problem is the Laplace equation on a slit square with traction-free crack faces, whose tip has interior angle $2\pi$. The second is the Laplace equation on a cross-shaped domain with four re-entrant corner singularities and mixed Dirichlet-Neumann boundary conditions. The third is an Allen-Cahn type reaction-diffusion problem~\cite{AllenCahn1979} on the square with homogeneous Neumann data and four localized Gaussian peaks. The fourth is a monotone semilinear problem on an L-shaped domain, in which a re-entrant corner singularity exists with a thin boundary layer. The fifth is the classical Motz problem, whose singularity arises where the boundary condition changes type on a straight edge. Exact solutions, closed form or manufactured, are available for the first three problems, for the fourth and fifth, a high resolution finite element reference is used. In every benchmark problem the uniform collocation and RAR arms are budget matched, with the same final collocation count, the same total Adam iterations and the same number of L-BFGS stages, they differ in how the inserted points are chosen and for the Motz problem, in the region from which they are drawn.

All experiments are run in PyTorch~\cite{Paszke2019} in double precision on a single NVIDIA RTX~5080 GPU, with Adam followed by L-BFGS (section~\ref{Sec3.5_algorithm_&_scope}). The loss weights were $(w_v,w_D,w_N,w_f)=(1,50,50,10)$ for all benchmarks except the reaction-diffusion problem, where $(w_v,w_D,w_N,w_f)=(1,100,50,5)$ was used, there $w_D$ acts on interior anchor points rather than on a Dirichlet boundary.

% PDE 1
\subsection{Laplace equation on a slit domain with traction-free crack faces}\label{creak_slit_PDE1}
We consider the Laplace equation on the slit square
\begin{equation}
\Delta u = 0 \text{ in } \Omega = (-1,1)^2 \setminus \Sigma \text{ where } \Sigma = \{(x,0):0\le x\le 1\},
\end{equation}
with homogeneous Neumann data prescribed on the two faces of the slit and Dirichlet data taken from the exact solution on the outer boundary. This problem models antiplane shear (Mode~III) deformation of a cracked elastic body~\cite{Anderson2017}. In crack-tip polar coordinates with the branch cut aligned with the slit, $\theta\in(0,2\pi)$, the exact solution is
\begin{equation}
u(r,\theta)=r^{1/2}\cos\!\left(\frac{\theta}{2}\right).
\label{eq:crack_exact}
\end{equation}
The crack tip has interior angle $\omega=2\pi$, so the leading singular exponent is $\lambda_{1}=\frac{\pi}{\omega}=\tfrac12$ (table~\ref{tab:exponents}). Hence $u\in H^{s}(\Omega)$ for every $s<3/2$, however, $u\notin H^{2}(\Omega)$ and $|\nabla u|\sim r^{-1/2}$ as $r\to0$.

This problem presents two major challenges for neural-network solvers. First, smooth activation functions struggle to approximate the non smooth behavior $r^{1/2}$ near the crack tip, causing the approximation error to localize around the singularity while the optimization process remains dominated by the smooth bulk region. Second, $u$ jumps across the slit, taking the values $\pm\sqrt{x}$ on the two faces and no network that is continuous in $(x,y)$ can represent this jump, whatever its capacity (remark~\ref{rem:crackenrich}).

To address these issues, the computational domain is partitioned into four rectangular elements that meet on the coordinate axes, with the slit on an element interface, so that no element crosses the discontinuity. In addition, the harmonic crack-tip coordinates
\begin{equation}
s_1=r^{1/2}\cos\!\left(\frac{\theta}{2}\right),
\qquad
s_2=r^{1/2}\sin\!\left(\frac{\theta}{2}\right),
\end{equation}
are appended to the network input. These enriched features encode both the leading singular mode and the branch cut discontinuity that cannot be represented using $(x,y)$ alone.

The trial space is parameterized by a multiscale Fourier-feature embedding with $n_{F}=64$ frequencies at each of the scales $\{\sigma_{1},\sigma_{2},\sigma_{3}\}=\{1,5,15\}$, followed by six $\tanh$ hidden layers of width $100$, the last five with residual connections. The test space uses $K=8$ Legendre bubble functions and $Q=24$ Gauss-Legendre quadrature points per direction. The training objective combines the variational residual, boundary penalties and a strong-form collocation term,
\begin{equation}
\mathcal{L}
=
\mathcal{L}_v
+
w_D\mathcal{L}_D
+
w_N\mathcal{L}_N
+
w_f\mathcal{L}_f,
\end{equation}
with weights $(w_v,w_D,w_N,w_f) = (1,50,50,10).$ Training runs for thirteen stages, an initial stage followed by twelve rounds, with each stage combining an Adam phase and a subsequent L-BFGS phase. The collocation set starts with $1500$ points and gains $200$ points per round, ending at $|\mathcal{T}_{f}|=3900$ after $150{,}000$ Adam iterations in total.

The uniform and RAR arms differ only in how the newly inserted collocation points are selected. In the uniform arm, points are sampled uniformly throughout the admissible region. The RAR arm instead scores a candidate pool of $10^{5}$ points by the strong-form residual. Direct ranking by $|R|$ is ineffective. The exact solution is harmonic, so the residual equals, up to sign, the Laplacian of the approximation error, which grows approximately like $r^{-2.3}$ near the crack tip (section~\ref{Sec3.4_rar_collocations}). The largest residuals therefore mark proximity to the singularity rather than large local error.

To mitigate this effect, points are selected by the three step rule of section~\ref{Sec3.4_rar_collocations}, radial normalization of the residual by its median in twelve logarithmically spaced bands~\eqref{eq:bandscore}, density sampling and a minimum-separation filter, here with $r_{\mathrm{g}}=2\times10^{-3}$.

\begin{table}[htbp]
\centering
\caption{Crack problem on the slit square (Mode~III). All errors are reported in percent (\%) and bold marks the more accurate arm in each row. Both arms use the same collocation schedule ($|\mathcal{T}_{f}|=1500\to3900$), $150{,}000$ Adam iterations and thirteen L-BFGS stages, one training run per arm. Errors are evaluated on a uniform $301\times301$ grid on $[-1,1]^2$. The relative $L^\infty$ error is $\max|u-\hat u|/\max|u|$ over the grid, where $\max|u|=2^{1/4}\cos(\pi/8)\approx1.099$ is attained at the corners $(1,\pm1)$, the $H^1$ semi-norm excludes grid points with $r\le10^{-3}$.}
\label{tab:crack_results}
\begin{tabular}{lcc}
\toprule
Metric & VC-PINNs (uniform) & VC-PINNs-RAR \\
\midrule
Relative $L^{2}$ error           & $0.4138$ & $\mathbf{0.3545}$ \\
Relative $L^{\infty}$ error      & $2.462$  & $\mathbf{1.889}$  \\
Relative $H^{1}$ semi-norm error & $2.374$  & $\mathbf{1.442}$  \\
\bottomrule
\end{tabular}
\end{table}

Both arms resolve a solution with an unbounded gradient and a jump across the slit, with relative $L^{2}$ errors below $0.5\%$ (table~\ref{tab:crack_results}). This indicates that the element partition and the crack-tip enrichment together capture the singular structure of the solution; as $s_{1}$ coincides with the exact solution~\eqref{eq:crack_exact}, the remaining error reflects the discretization rather than the representation of the singularity. At identical collocation and iteration budgets, RAR reduces the relative $L^{2}$ error by $14\%$ (a factor of $1.17$), the relative $L^{\infty}$ error by $23\%$ (a factor of $1.30$) and the relative $H^{1}$ semi-norm error by $39\%$ (a factor of $1.65$). The largest improvement occurs in the $H^1$ semi-norm, which measures the error in the gradient, where the singularity resides.

The spatial error distributions show where these improvements occur. With uniform insertion, the error remains concentrated around the crack tip and extends into a broad lobe away from the singular region (Fig.~\ref{fig:PDE1_figures}C). RAR suppresses these secondary structures and produces a nearly radially symmetric error pattern around the crack tip (Fig.~\ref{fig:PDE1_figures}I). This is consistent with the adaptive strategy allocating collocation points to regions where the approximation error is most significant and thus making more effective use of the same collocation budget.

The adaptive collocation set is graded toward the crack tip while maintaining adequate coverage of the far field (Fig.~\ref{fig:PDE1_figures}F). This behavior closely resembles mesh grading in adaptive finite element methods. In the loss histories (Fig.~\ref{fig:PDE1_figures}J and K), the jump in $\mathcal{L}_{f}$ at an insertion is larger in the RAR arm in most rounds; its inserted points are chosen for their large residual.

\begin{figure}[!htbp]
\centering
%---------------- Row 1 ----------------%
\begin{subfigure}{0.32\textwidth}
    \textbf{A}\\[0.5mm]
    \includegraphics[width=\linewidth]{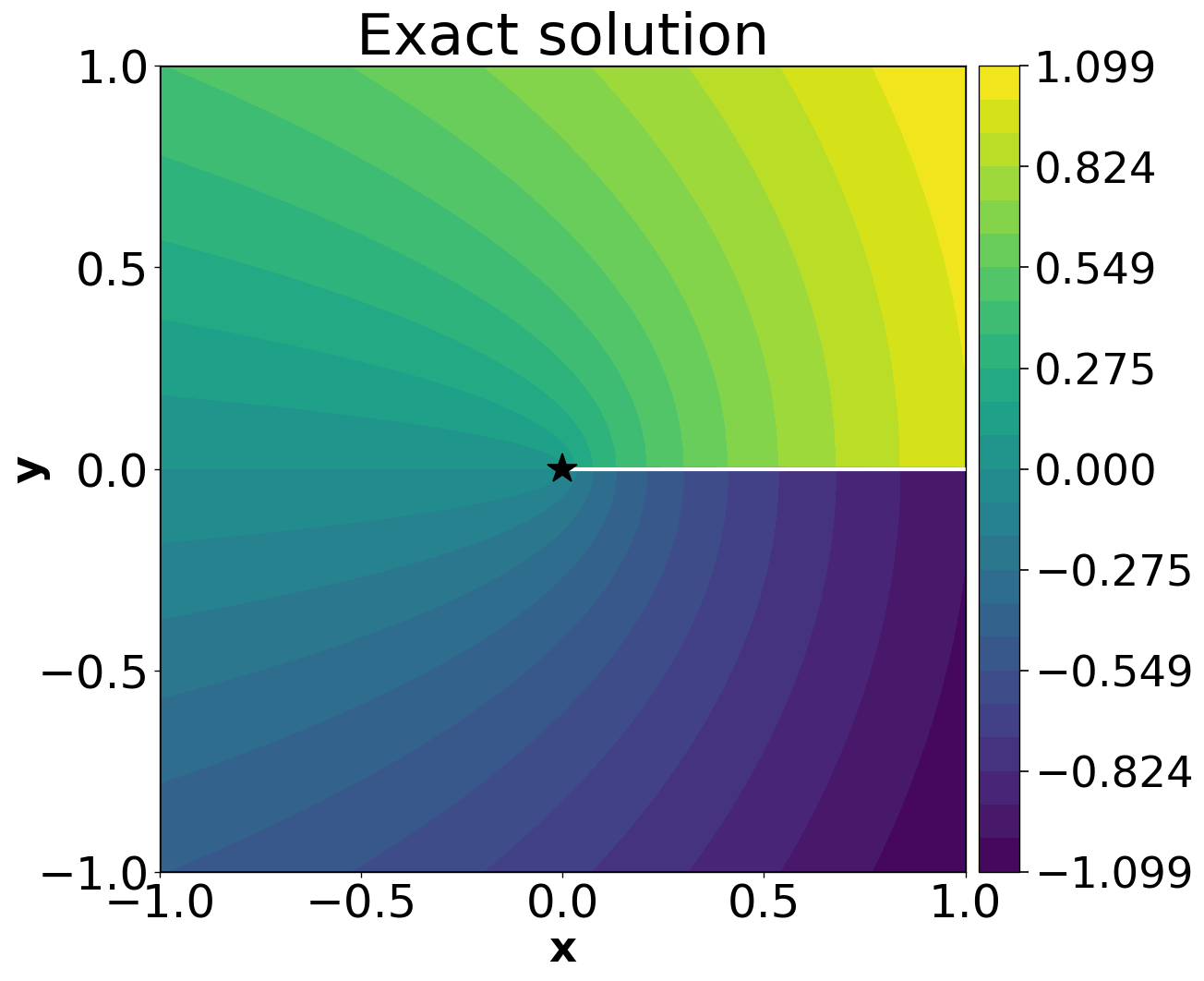}
\end{subfigure}
\hfill
\begin{subfigure}{0.32\textwidth}
    \textbf{B}\\[0.5mm]
    \includegraphics[width=\linewidth]{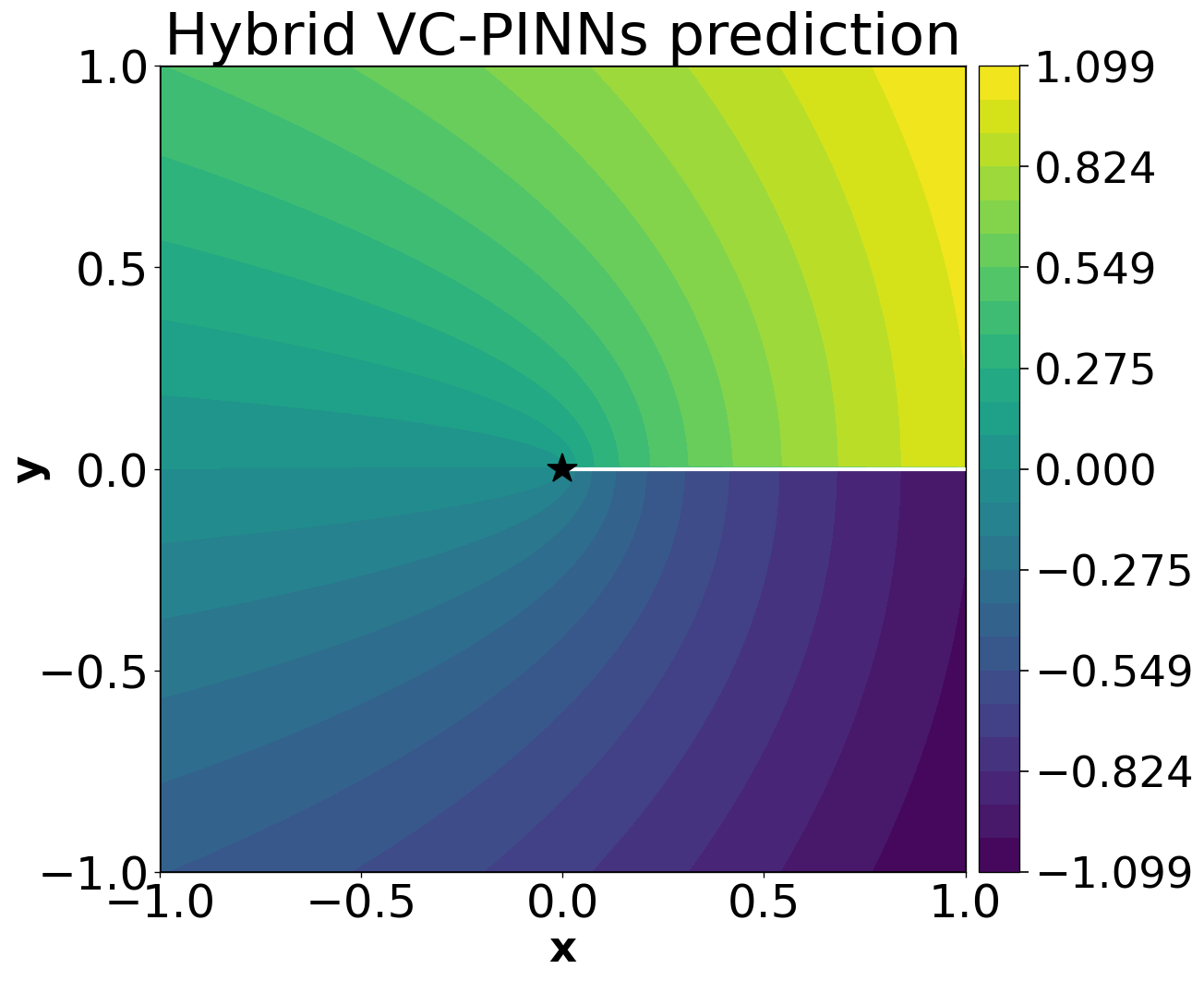}
\end{subfigure}
\hfill
\begin{subfigure}{0.32\textwidth}
    \textbf{C}\\[0.5mm]
    \includegraphics[width=\linewidth]{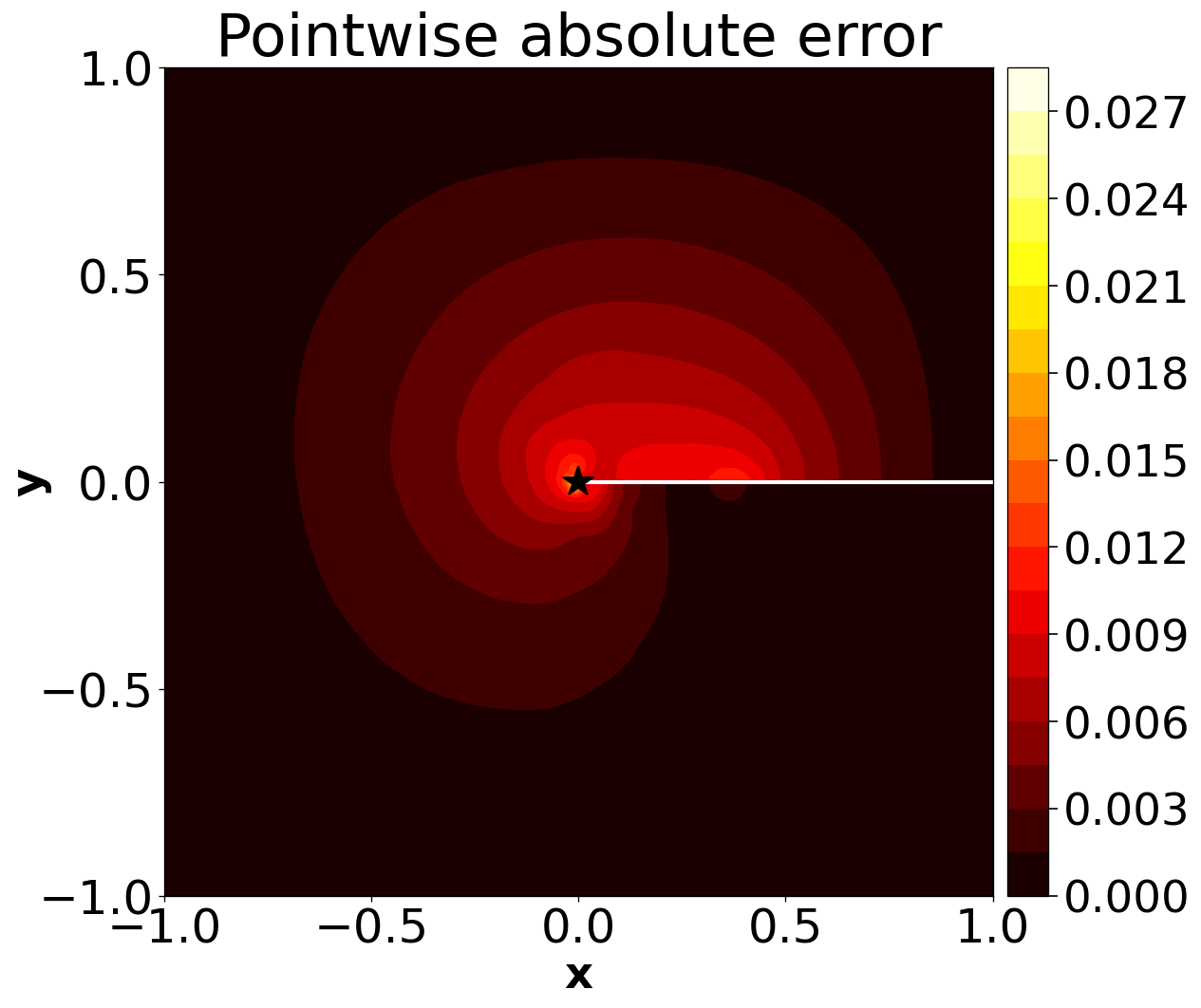}
\end{subfigure}

%---------------- Row 2 ----------------%
\begin{subfigure}{0.32\textwidth}
    \textbf{D}\\[0.5mm]
    \includegraphics[width=\linewidth]{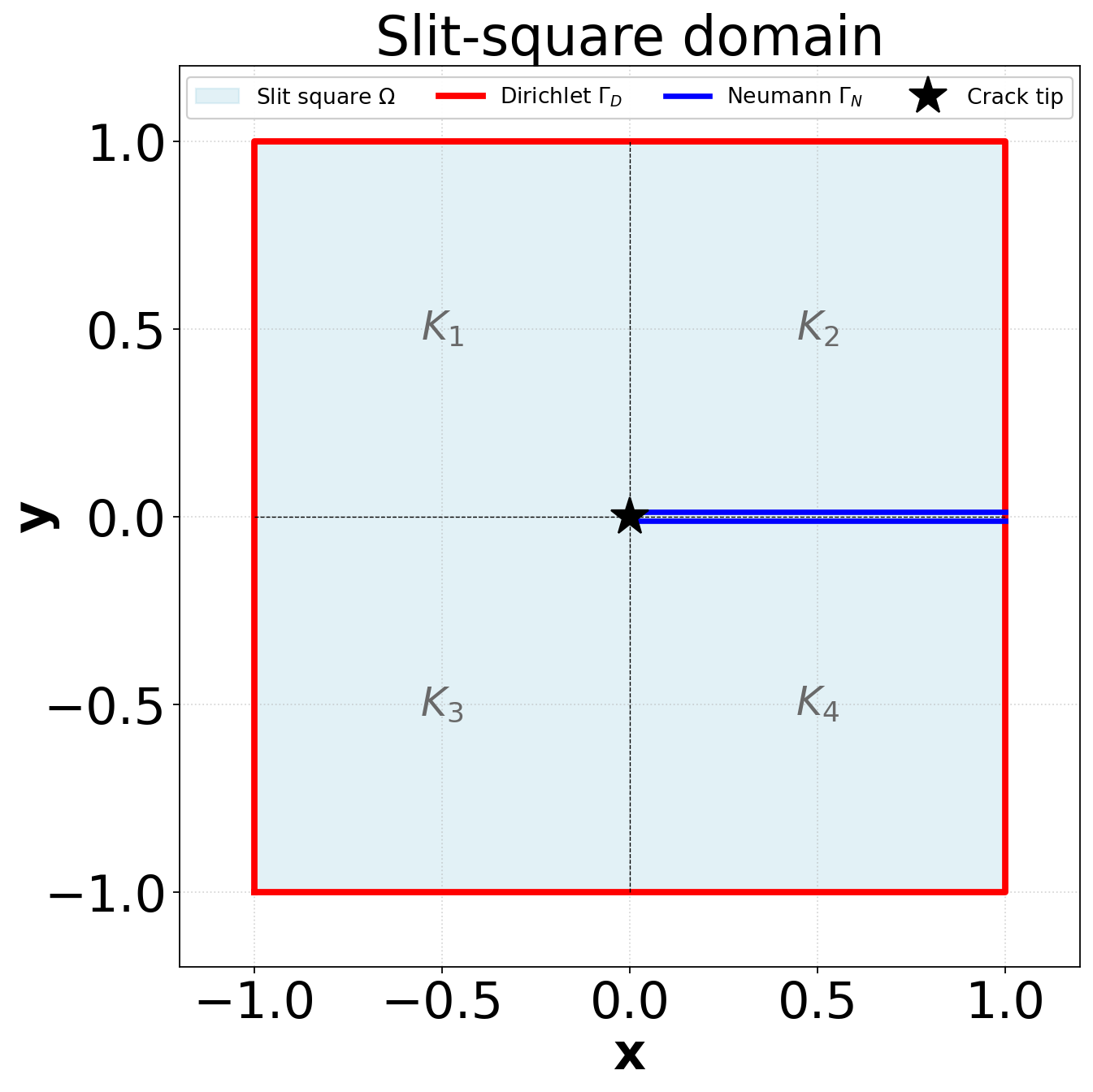}
\end{subfigure}
\hfill
\begin{subfigure}{0.32\textwidth}
    \textbf{E}\\[0.5mm]
    \includegraphics[width=\linewidth]{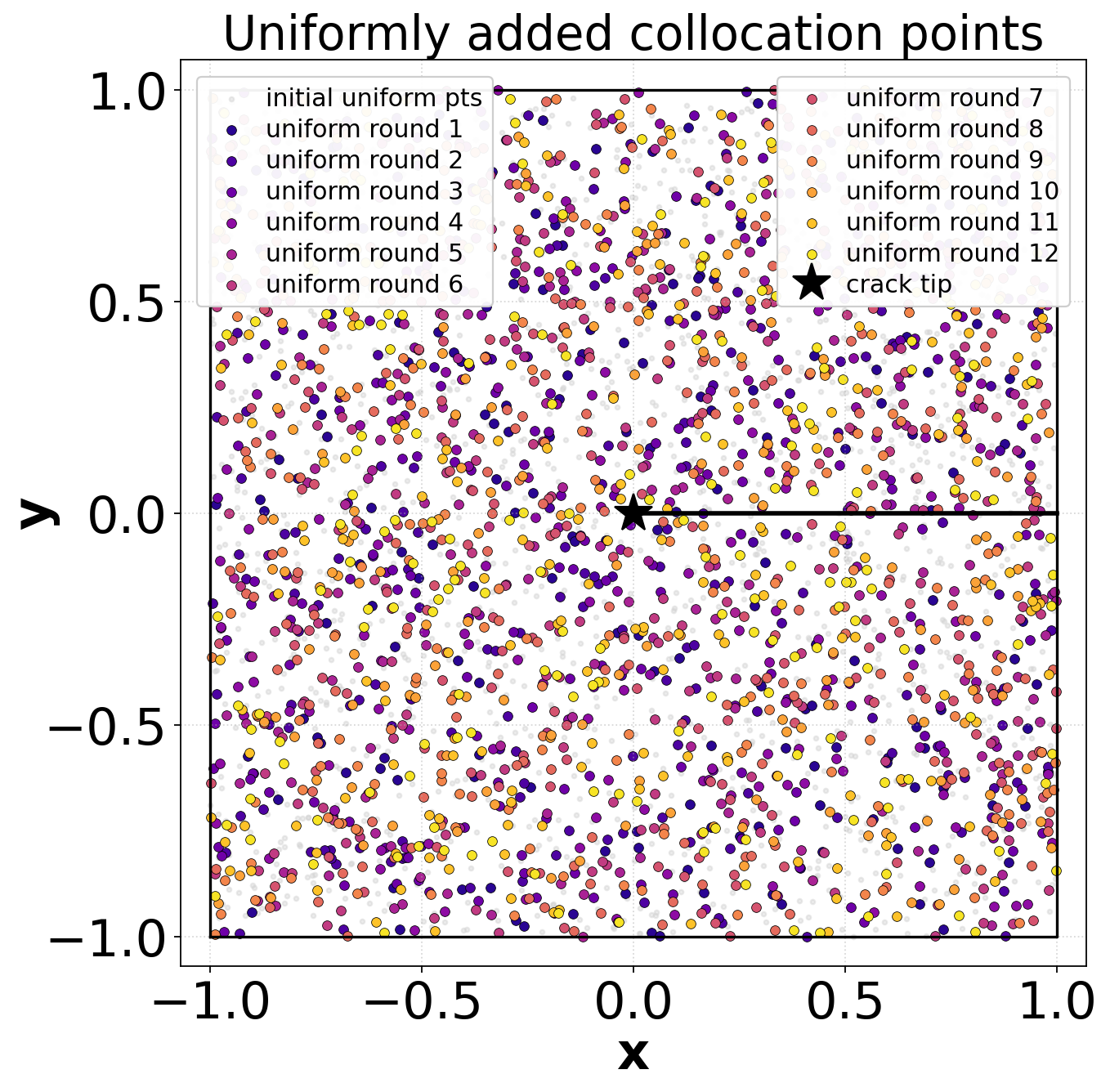}
\end{subfigure}
\hfill
\begin{subfigure}{0.32\textwidth}
    \textbf{F}\\[0.5mm]
    \includegraphics[width=\linewidth]{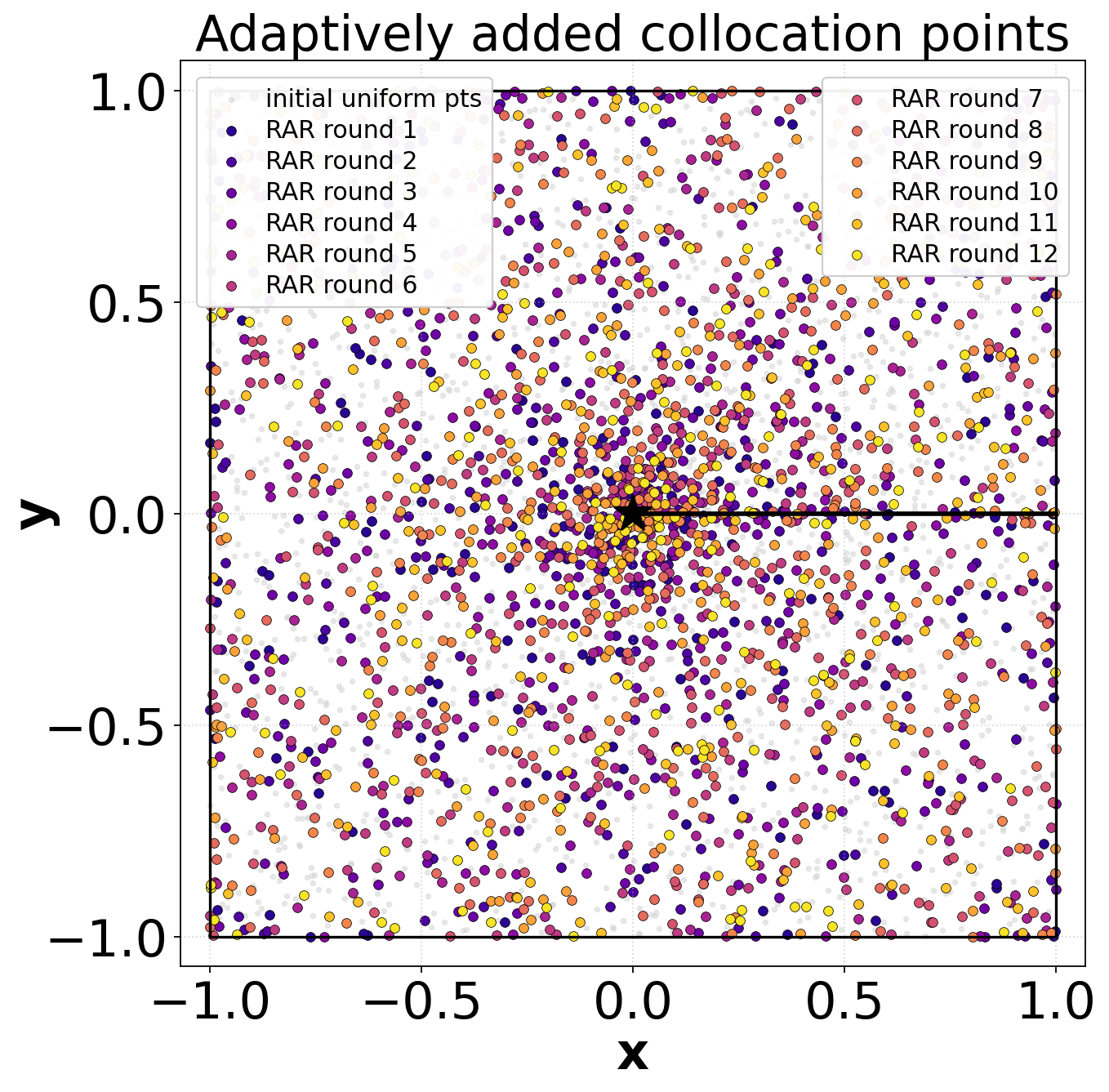}
\end{subfigure}

%---------------- Row 3 ----------------%
\begin{subfigure}{0.32\textwidth}
    \textbf{G}\\[0.5mm]
    \includegraphics[width=\linewidth]{crack_face_exact.png}
\end{subfigure}
\hfill
\begin{subfigure}{0.32\textwidth}
    \textbf{H}\\[0.5mm]
    \includegraphics[width=\linewidth]{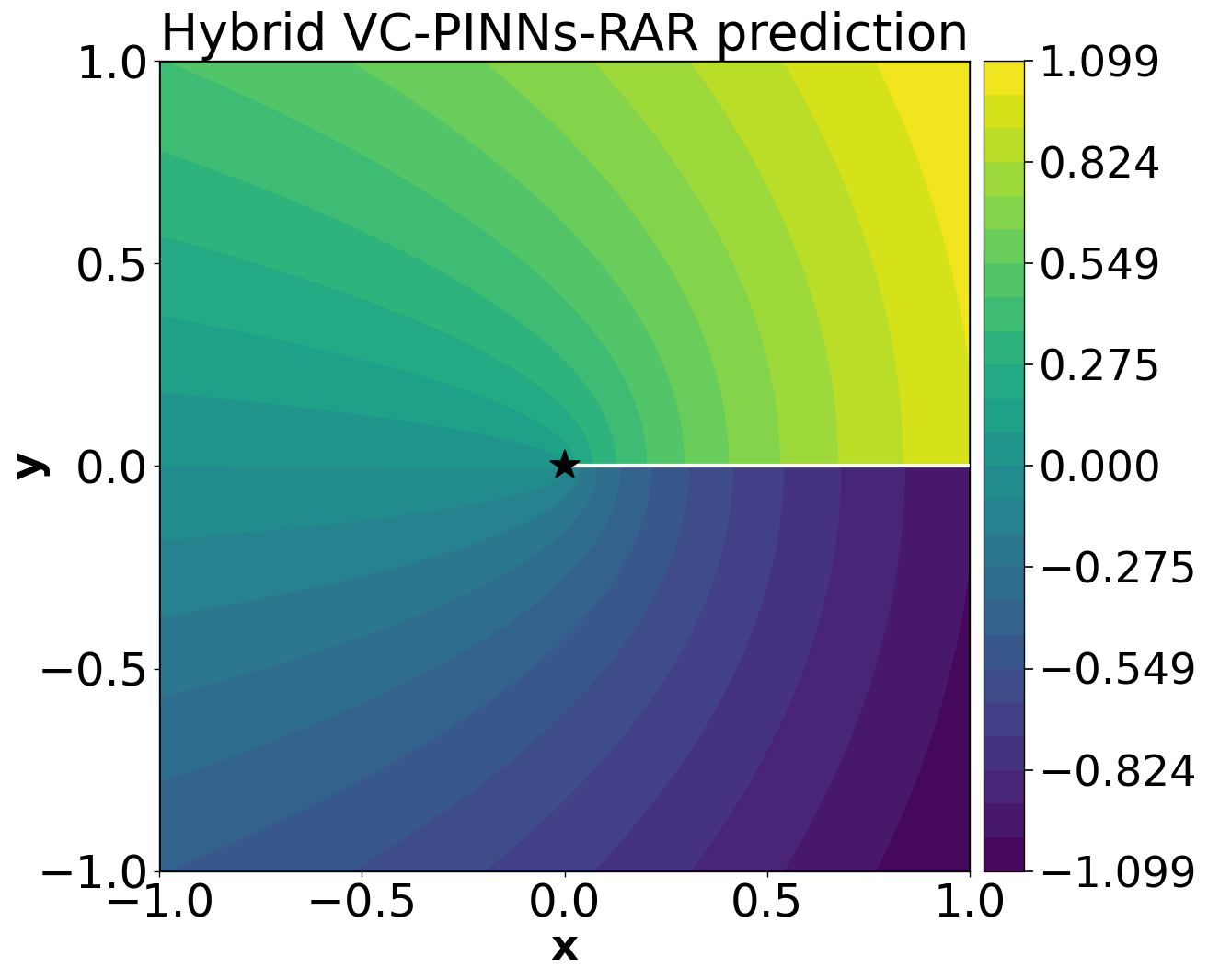}
\end{subfigure}
\hfill
\begin{subfigure}{0.32\textwidth}
    \textbf{I}\\[0.5mm]
    \includegraphics[width=\linewidth]{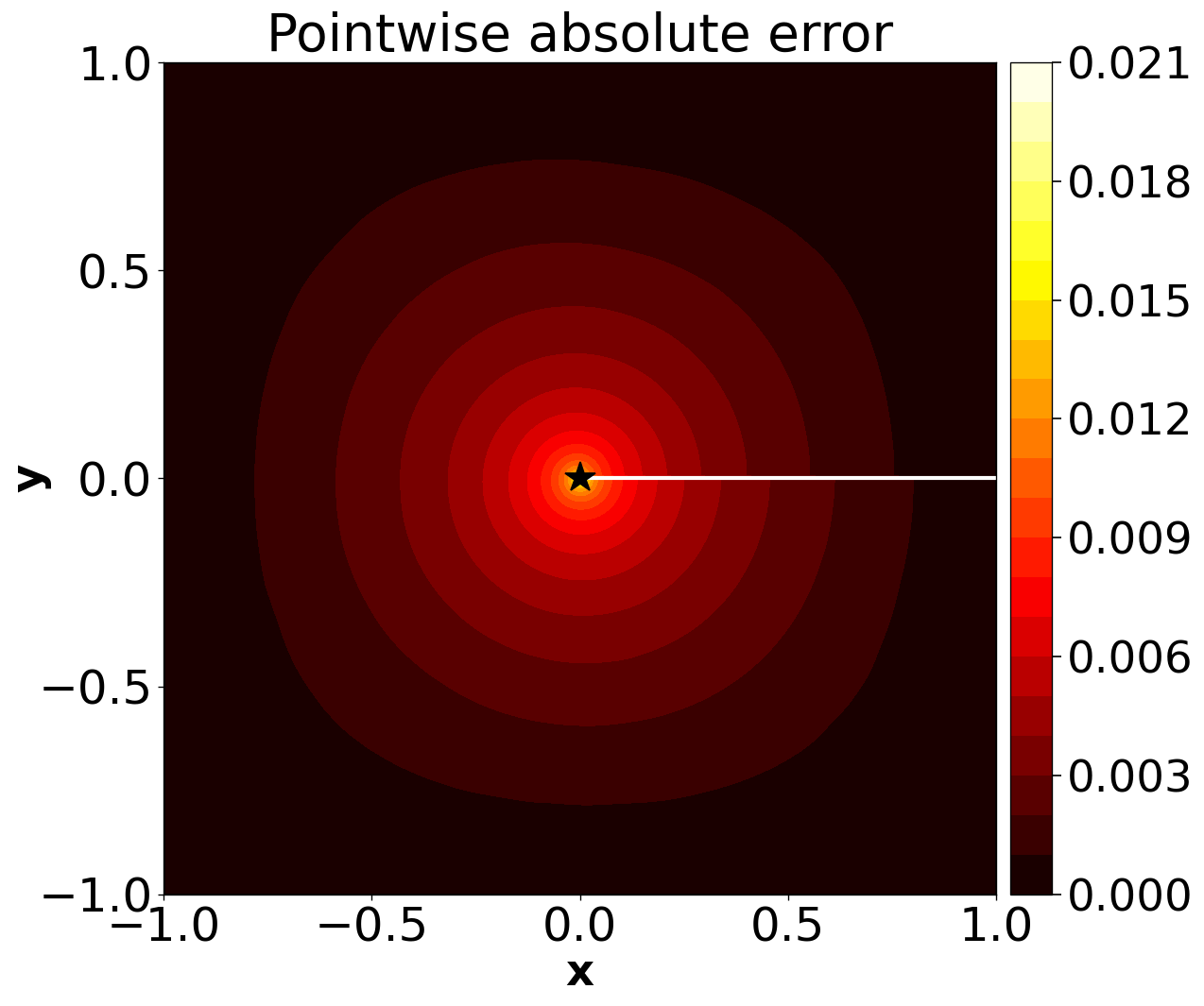}
\end{subfigure}

%---------------- Row 4 ----------------%
\begin{subfigure}{0.44\textwidth}
    \textbf{J}\\[0.5mm]
    \includegraphics[width=\linewidth]{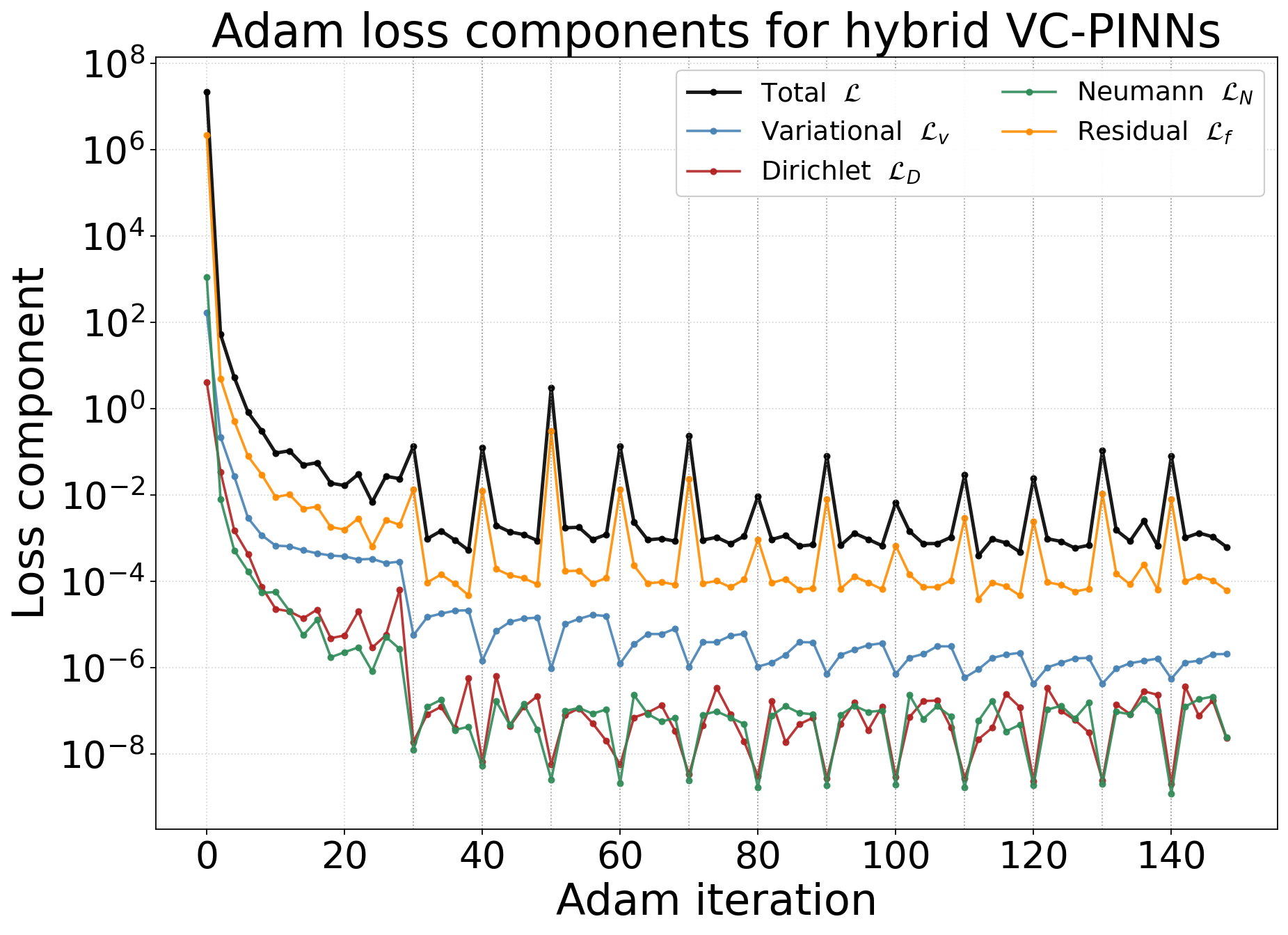}
\end{subfigure}
\hspace{0.04\textwidth}
\begin{subfigure}{0.44\textwidth}
    \textbf{K}\\[0.5mm]
    \includegraphics[width=\linewidth]{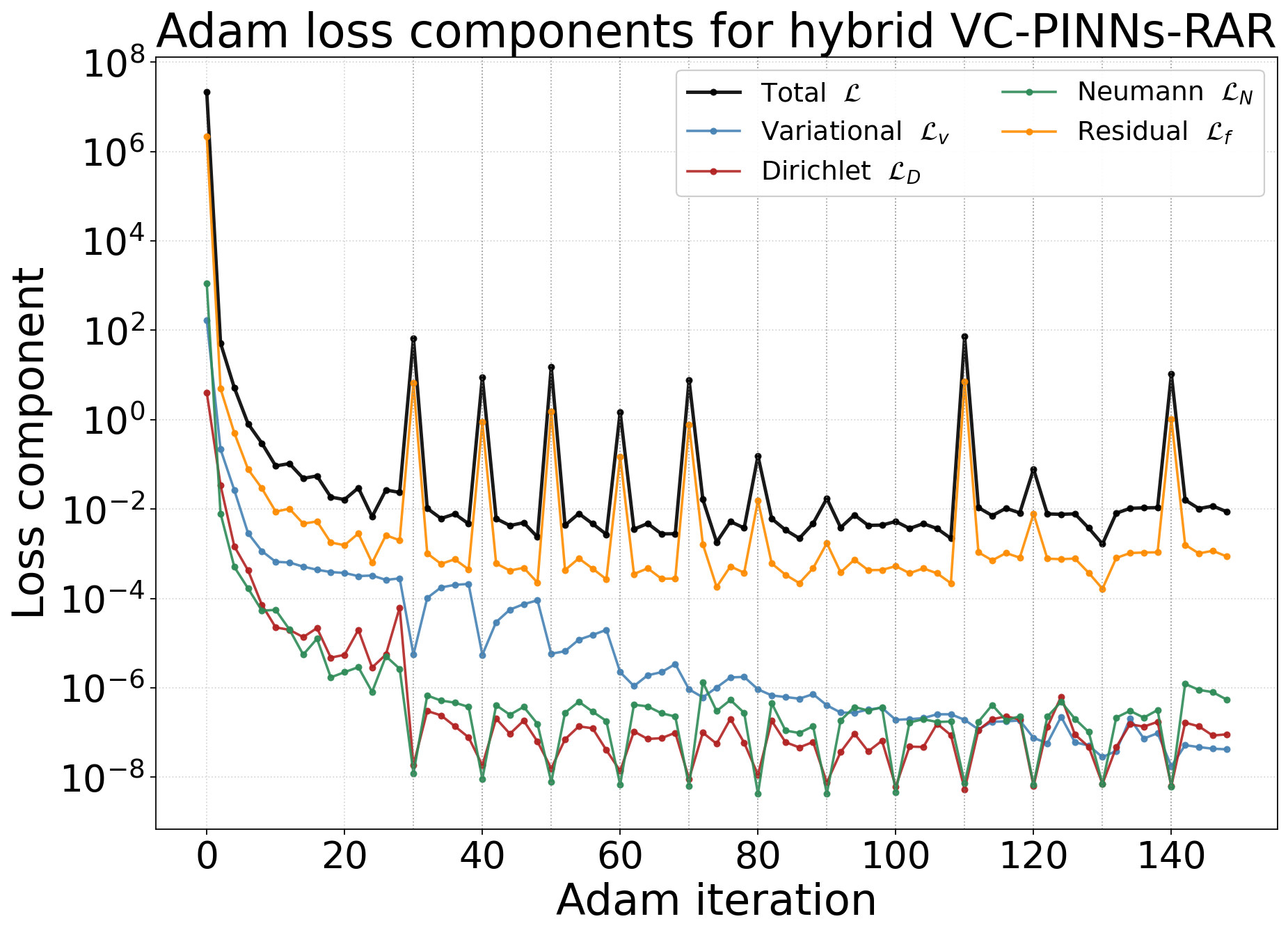}
\end{subfigure}
\vspace{-8pt}
\caption{Crack problem on the slit square (Mode~III). \textbf{(A-C)} Exact solution, VC-PINNs (uniform) prediction and pointwise absolute error. \textbf{(D-F)} Domain with boundary conditions and element partition, uniform collocation set and adaptive collocation set after twelve rounds, $|\mathcal{T}_{f}|=1500\to3900$. \textbf{(G-I)} Exact solution, VC-PINNs-RAR prediction and pointwise absolute error; panels C and I have separate color scales. \textbf{(J-K)} Adam loss components for the uniform and the RAR arm.}\label{fig:PDE1_figures}
\end{figure}

%PDE2
\subsection{Laplace equation on a cross-shaped domain with four re-entrant corners}\label{plus_shaped_PDE2}
This example tests whether RAR can resolve several singularities simultaneously and more stringently, rank them: the four corners of the cross share the same exponent yet carry different amplitudes, so an effective adaptive scheme must distribute points in proportion to their strength rather than merely locate the singular points.

We consider $\Delta u = 0$ on the cross-shaped domain $\Omega=\big((-1,1)\times(-\tfrac13,\tfrac13)\big)\cup\big((-\tfrac13,\tfrac13)\times(-1,1)\big)$, the union of a horizontal and a vertical bar. Its four re-entrant corners $C_1=(-\tfrac13,\tfrac13)$, $C_2=(\tfrac13,\tfrac13)$, $C_3=(\tfrac13,-\tfrac13)$ and $C_4=(-\tfrac13,-\tfrac13)$ each have interior angle $\omega=\tfrac{3\pi}{2}$; by classical corner regularity theory~\cite{Grisvard2011}, the leading singular exponent is therefore $\lambda_{1}=\frac{\pi}{\omega}=\tfrac23$ (table~\ref{tab:exponents}). Dirichlet data are prescribed on the four bar ends and Neumann data on the eight side walls, both read from the exact solution
\begin{equation}
u=\sum_{i=1}^{4}c_i\,r_i^{2/3}\sin\!\Big(\tfrac23\big(\theta_i-\theta_{0,i}\big)\Big),
\qquad (c_1,c_2,c_3,c_4)=(1,\,0.5,\,2,\,0.75),
\label{eq:cross_exact}
\end{equation}
where $(r_i,\theta_i)$ are polar coordinates at $C_i$ with the angular branch cut placed along the bisector of the removed quadrant and $\theta_{0,i}$ is the angle of one adjacent edge. Each mode is harmonic, so $f\equiv0$ exactly; each also vanishes on both edges meeting at its own corner, since $\sin(\tfrac23\cdot\tfrac{3\pi}{2})=0$.

Near $C_i$ the remaining three terms are analytic, so the local behavior is governed by the $i$-th mode alone. Hence $u\in H^{s}(\Omega)$ for every $s<5/3$; however, $u\notin H^{2}(\Omega)$, with $|\nabla u|\sim\tfrac23 c_i r_i^{-1/3}$ as $r_i\to0$. The regularity is lost at four singular points at once and each corner's gradient blow-up scales with its own amplitude. Unlike benchmarks whose difficulty is manufactured through a localized source, the obstruction here comes from the geometry rather than the data. The problem models steady heat conduction in a cross-shaped section with a prescribed heat flux on the side walls and prescribed temperatures at the bar ends, the re-entrant corners being the sites of flux concentration.

For uniform collocation, the difficulty is that of the single-corner case multiplied by four: a uniform budget assigns each corner neighborhood a share proportional only to its area and the four singular regions divide that share among them. Since $f\equiv0$, the strong-form residual measures purely the failure of the network to be harmonic and therefore peaks at the corners. The unequal amplitudes turn this into a ranking problem: the strongest corner $C_3$ contributes most to the global loss and the weakest, $C_2$, least.

The domain is tiled by three rectangular elements, the full vertical bar together with the left and right side bars, so that no element leaves $\Omega$. The network input is enriched with the powers $r_{i}^{2/3}$ of the four corner distances, which expose the radial rate of every singularity while leaving the angular structure to be learned. The trial space is a multiscale Fourier-feature embedding followed by six $\tanh$ hidden layers of width $100$, the last five with residual connections; the test space uses $K=8$ Legendre bubble functions and $Q=24$ Gauss-Legendre nodes per direction. The objective combines the variational residual, boundary penalties and a strong-form collocation term with weights $(w_v,w_D,w_N,w_f)=(1,50,50,10)$, using $300$ Dirichlet points per bar end and $200$ Neumann points per side wall. Both arms start from the same $8000$ collocation points, insert $200$ points in each of twelve rounds to reach $|\mathcal{T}_{f}|=10{,}400$ and are trained for $150{,}000$ Adam iterations and thirteen L-BFGS stages. They differ only in placement: the uniform arm draws the inserted points uniformly and the RAR arm takes the $200$ largest-residual points from a candidate pool of $25{,}000$ points.

\begin{table}[t]
\centering
\caption{Cross problem. All errors are reported in percent (\%) and bold marks the more accurate arm in each row. Both arms use the same collocation schedule ($|\mathcal{T}_{f}|=8000\to10{,}400$), $150{,}000$ Adam iterations and thirteen L-BFGS stages; one training run per arm. Errors are evaluated at the points of a uniform $301\times301$ grid on $[-1,1]^2$ that lie in $\overline{\Omega}$. The relative $L^{\infty}$ error is $\max|u-\hat u|/\max|u|$ over these points, where $\max|u|\approx3.577$; the $H^{1}$ semi-norm uses the network gradient from automatic differentiation and excludes points within $10^{-3}$ of a corner.}
\label{PDE_2_table}
\begin{tabular}{lcc}
\toprule
Metric & VC-PINNs (uniform) & VC-PINNs-RAR \\
\midrule
Relative $L^{2}$ error           & $5.078$ & $\mathbf{0.1601}$ \\
Relative $L^{\infty}$ error      & $13.85$ & $\mathbf{1.283}$  \\
Relative $H^{1}$ semi-norm error & $32.36$ & $\mathbf{2.248}$  \\
\bottomrule
\end{tabular}
\end{table}

RAR reduces the relative $L^{2}$ error from $5.078\%$ to $0.1601\%$ (a factor of about $32$), the relative $H^{1}$ semi-norm error by a factor of about $14$ and the relative $L^{\infty}$ error by a factor of about $11$ (table~\ref{PDE_2_table}; Fig.~\ref{fig:PDE2_figures}C and I). The remaining error of the RAR arm is largest around $C_{1}$. The inserted points (Fig.~\ref{fig:PDE2_figures}F) reach all four corners, most densely $C_{3}$, the corner of largest amplitude; however, a large share of them lies along the bar ends and side walls and in small clusters in the interior. Since the corner locations are already encoded in the input features, this example tests how the residual distributes points among the four corners rather than whether it finds them.

\begin{figure}[!htbp]
\centering
%---------------- Row 1 ----------------%
\begin{subfigure}{0.32\textwidth}
    \textbf{A}\\[0.5mm]
    \includegraphics[width=\linewidth]{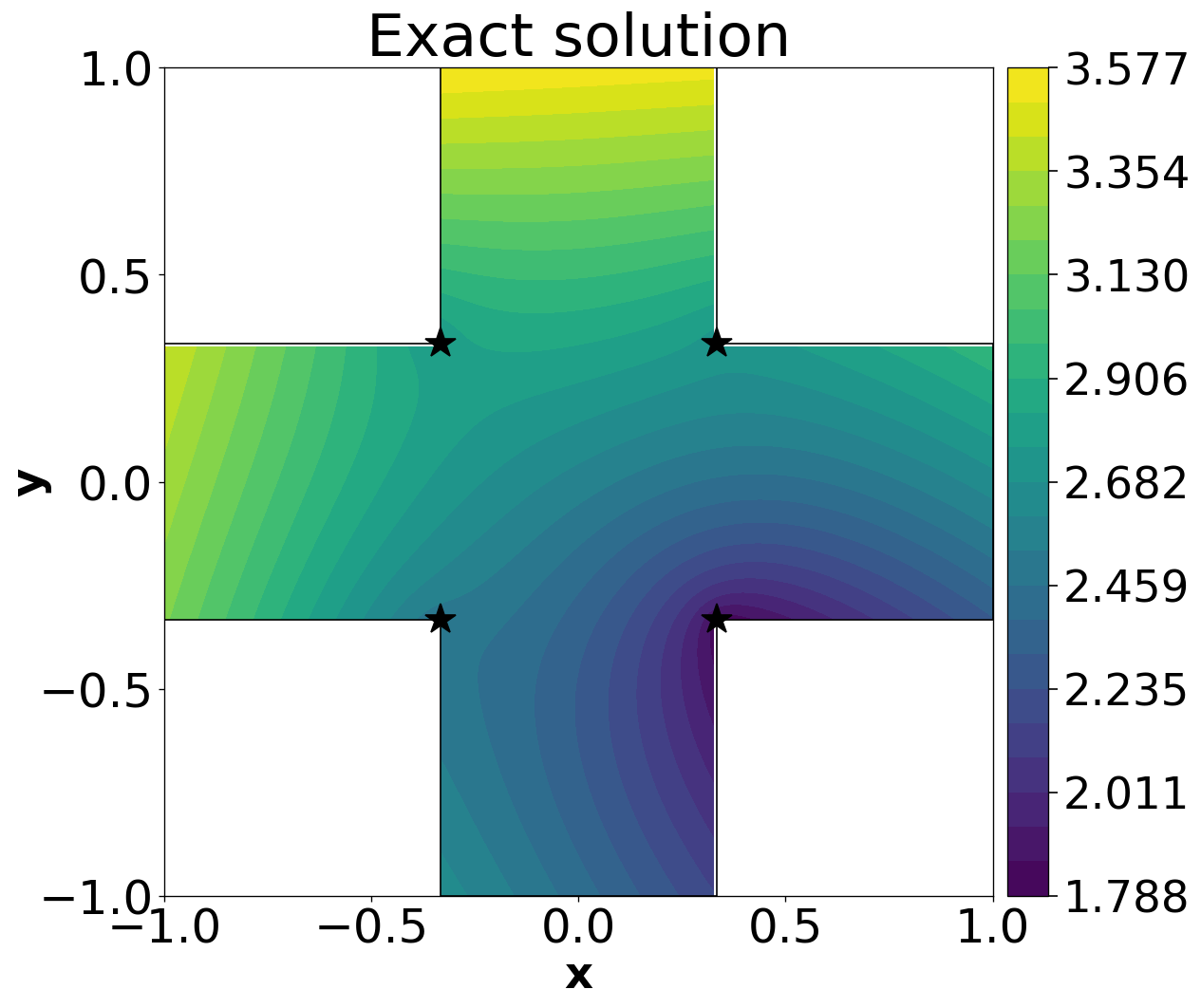}
\end{subfigure}
\hfill
\begin{subfigure}{0.32\textwidth}
    \textbf{B}\\[0.5mm]
    \includegraphics[width=\linewidth]{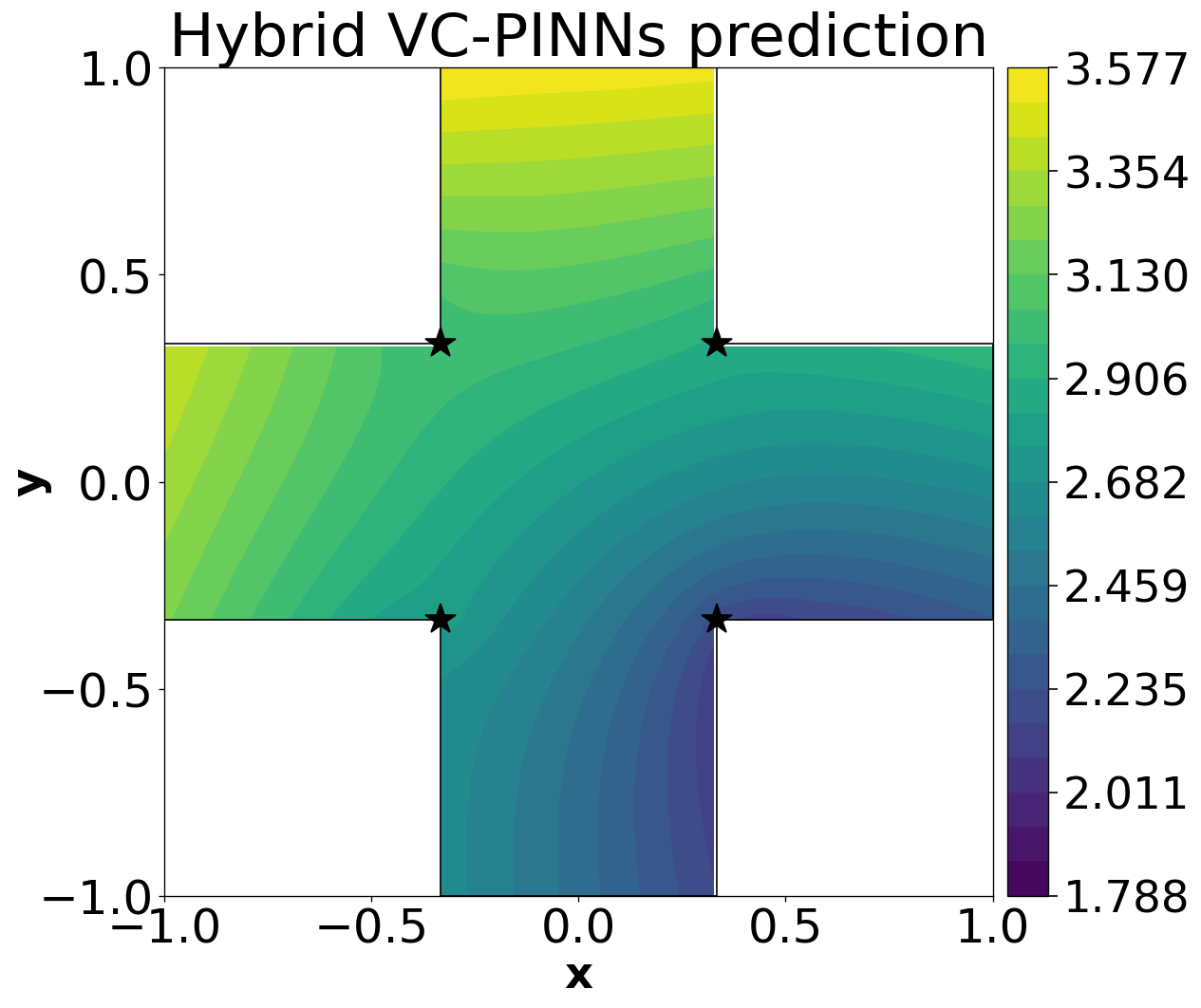}
\end{subfigure}
\hfill
\begin{subfigure}{0.32\textwidth}
    \textbf{C}\\[0.5mm]
    \includegraphics[width=\linewidth]{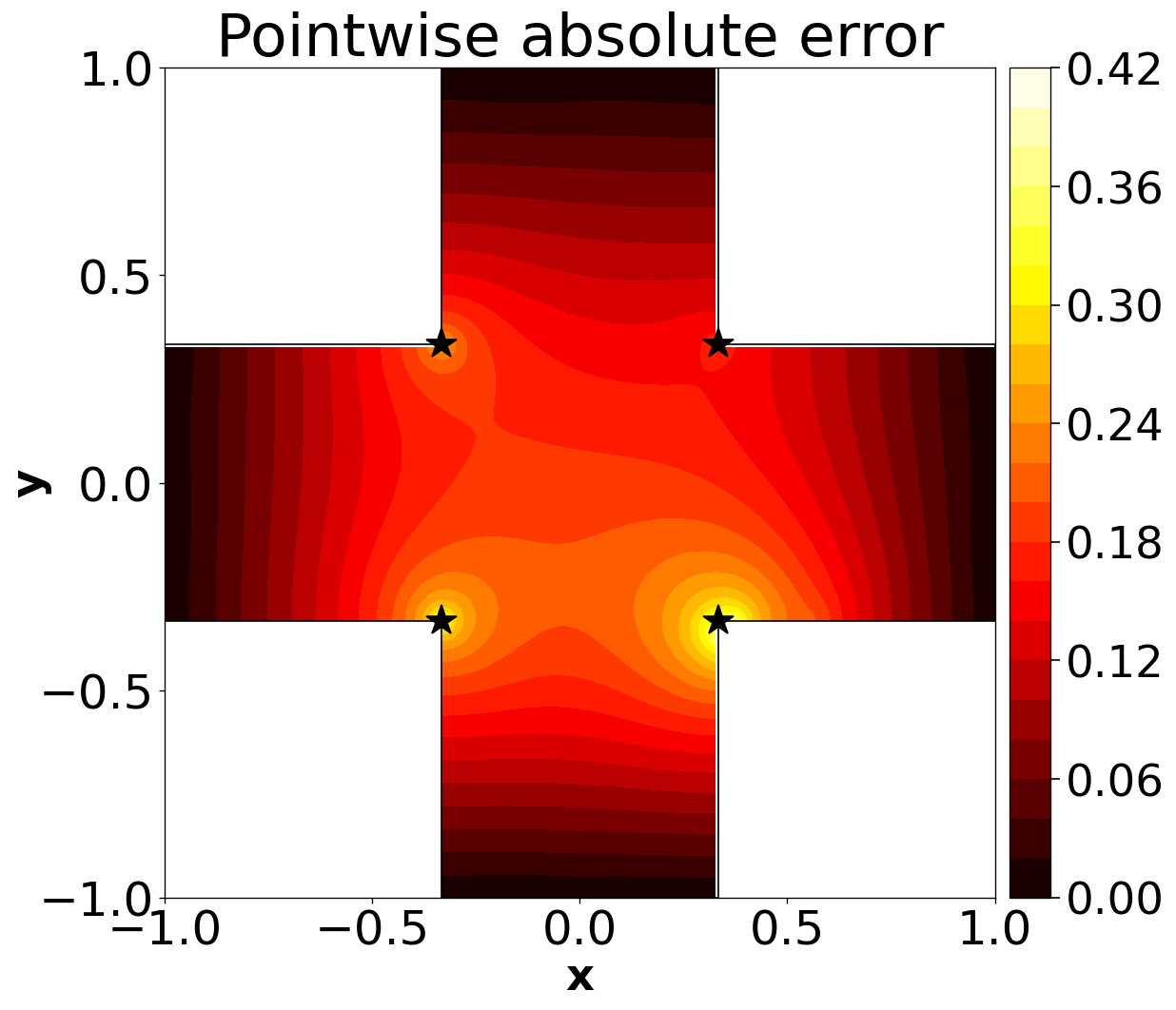}
\end{subfigure}

%---------------- Row 2 ----------------%
\begin{subfigure}{0.32\textwidth}
    \textbf{D}\\[0.5mm]
    \includegraphics[width=\linewidth]{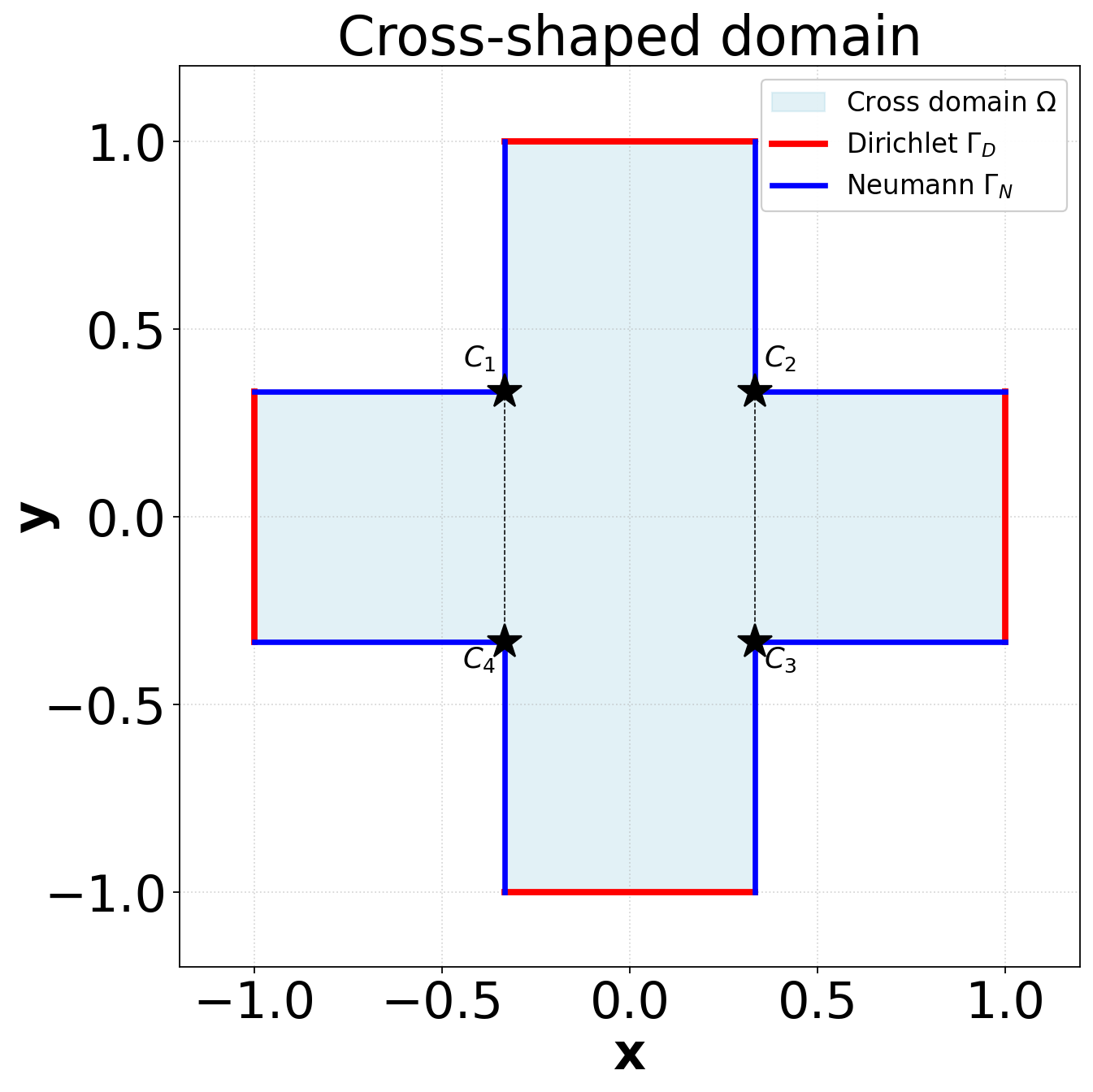}
\end{subfigure}
\hfill
\begin{subfigure}{0.32\textwidth}
    \textbf{E}\\[0.5mm]
    \includegraphics[width=\linewidth]{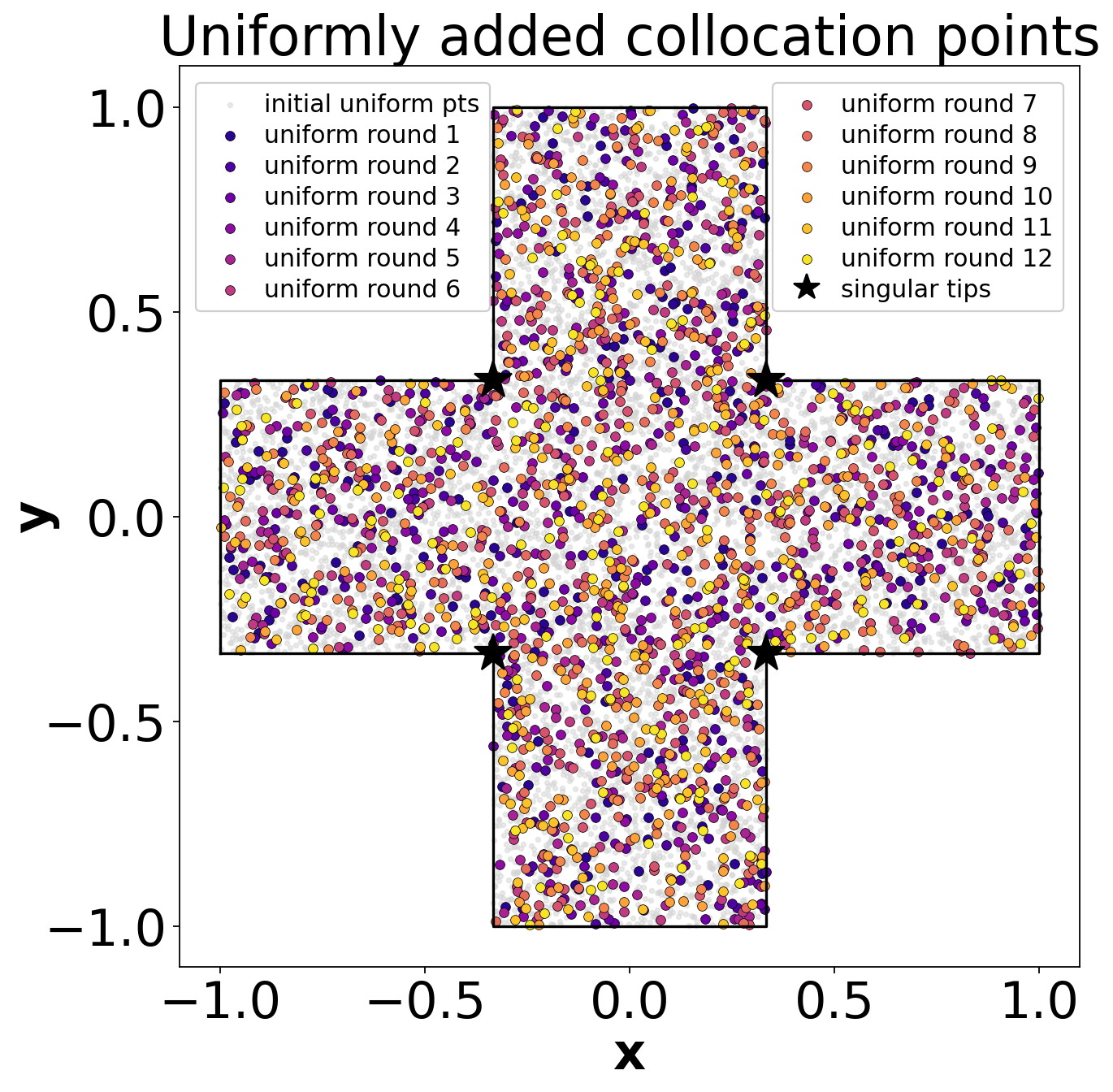}
\end{subfigure}
\hfill
\begin{subfigure}{0.32\textwidth}
    \textbf{F}\\[0.5mm]
    \includegraphics[width=\linewidth]{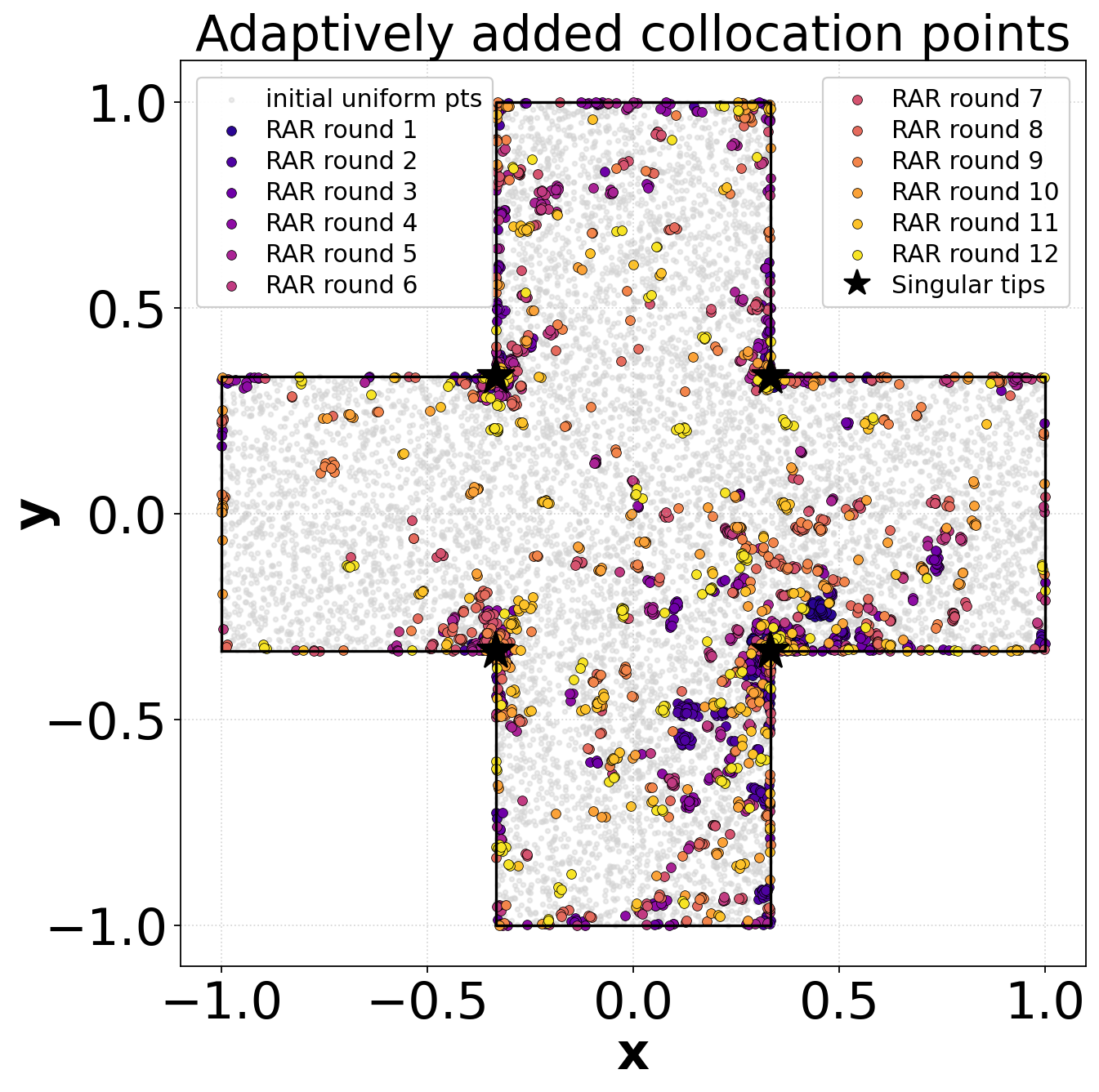}
\end{subfigure}

%---------------- Row 3 ----------------%
\begin{subfigure}{0.32\textwidth}
    \textbf{G}\\[0.5mm]
    \includegraphics[width=\linewidth]{plus_exact.png}
\end{subfigure}
\hfill
\begin{subfigure}{0.32\textwidth}
    \textbf{H}\\[0.5mm]
    \includegraphics[width=\linewidth]{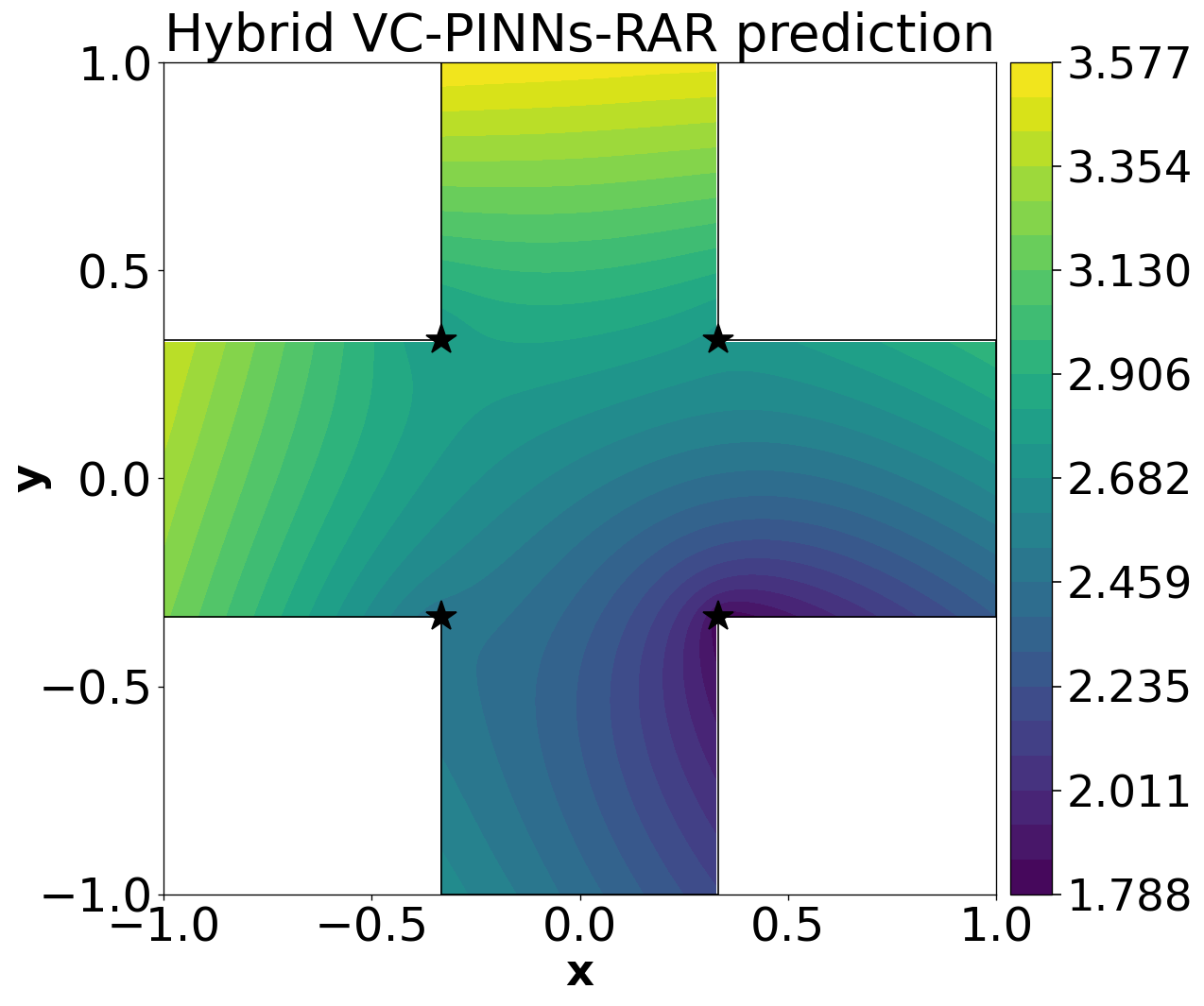}
\end{subfigure}
\hfill
\begin{subfigure}{0.32\textwidth}
    \textbf{I}\\[0.5mm]
    \includegraphics[width=\linewidth]{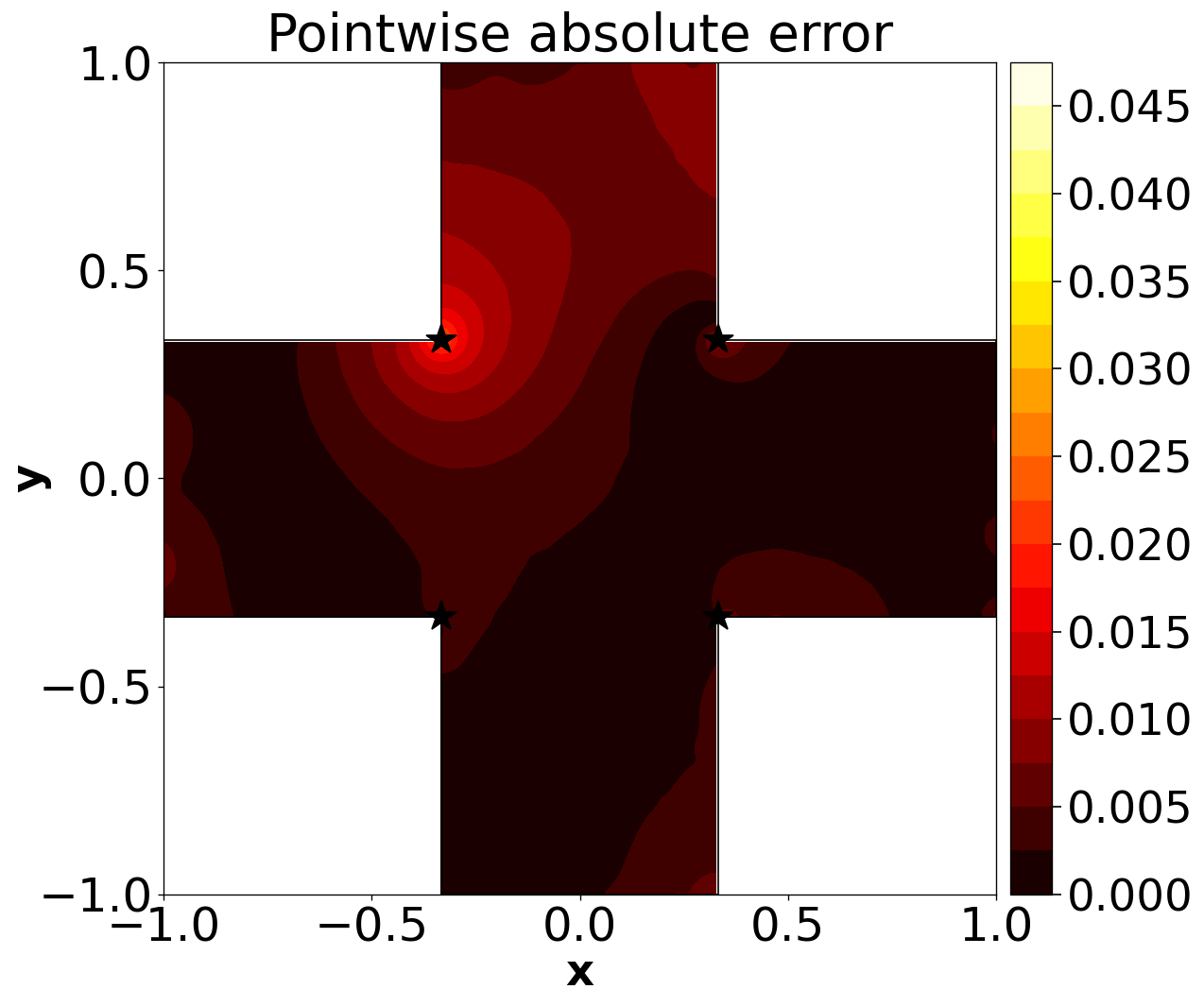}
\end{subfigure}

%---------------- Row 4 ----------------%
\begin{subfigure}{0.44\textwidth}
    \textbf{J}\\[0.5mm]
    \includegraphics[width=\linewidth]{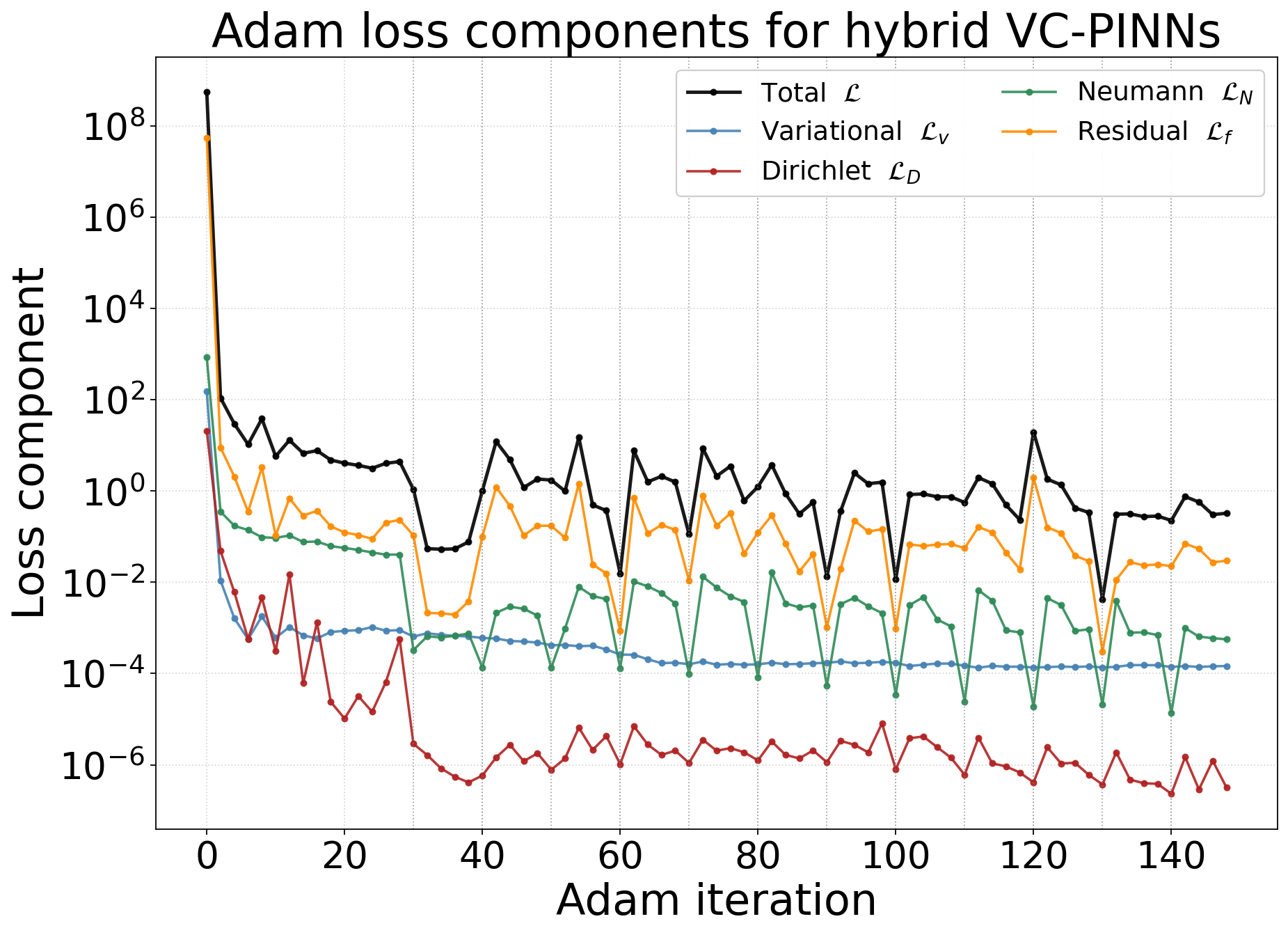}
\end{subfigure}
\hspace{0.04\textwidth}
\begin{subfigure}{0.44\textwidth}
    \textbf{K}\\[0.5mm]
    \includegraphics[width=\linewidth]{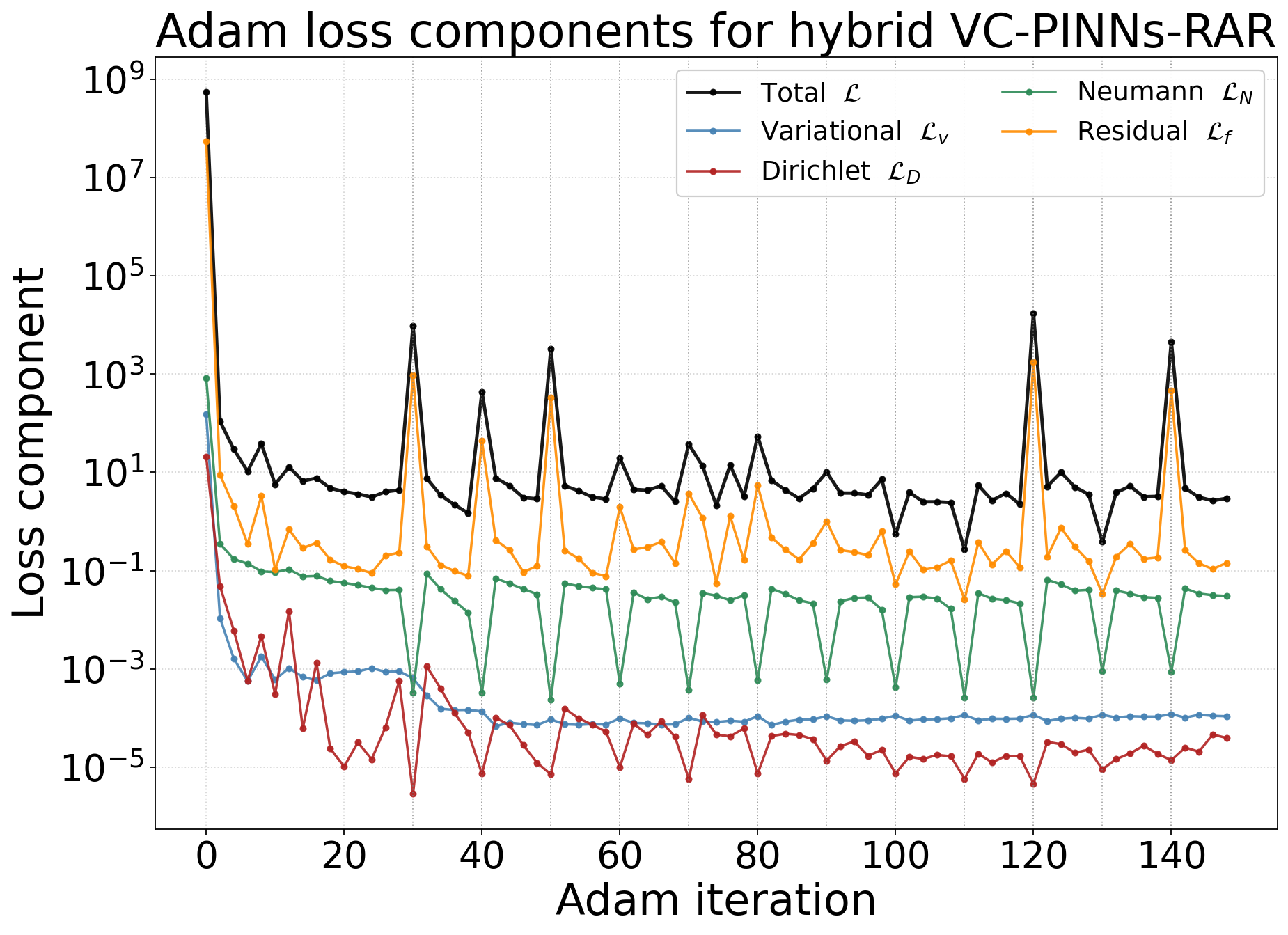}
\end{subfigure}
\vspace{-8pt}
\caption{Cross problem: Laplace equation on the cross-shaped domain. \textbf{(A--C)} Exact solution, VC-PINNs (uniform) prediction and pointwise absolute error. \textbf{(D-F)} Domain with boundary conditions and element partition, uniform collocation set and adaptive collocation set after twelve rounds, $|\mathcal{T}_{f}|=8000\to10{,}400$. \textbf{(G-I)} Exact solution, VC-PINNs-RAR prediction and pointwise absolute error; panels C and I have separate color scales. \textbf{(J-K)} Adam loss components for the uniform and the RAR arm.}\label{fig:PDE2_figures}
\end{figure}

%PDE3
\subsection{Nonlinear reaction-diffusion with four localized peaks}\label{reaction_diffusion_PDE4}
The preceding examples carry singularities whose locations are dictated by the geometry and are supplied to the network through input enrichment. This example removes both advantages: the solution structure is localized in the interior rather than at the boundary, its position is not encoded anywhere in the architecture and the equation is nonlinear. It therefore tests whether residual-driven refinement can locate and resolve sharp features that the trial space knows nothing about.

We consider the steady reaction-diffusion equation
\begin{equation}
-\varepsilon^{2}\Delta u + u^{3} - u = f \quad\text{in } \Omega=(-1,1)^{2},
\qquad \partial_{\bm{n}}u=0 \ \text{ on } \partial\Omega,
\label{eq:rd_pde}
\end{equation}
with $\varepsilon=0.01$, the homogeneous Neumann condition modeling a closed system with zero flux across the boundary. The source $f$ is obtained by substituting into~\eqref{eq:rd_pde} the exact solution
\begin{equation}
u=\sum_{k=1}^{4}A_k\exp\!\Big(-\frac{\|\bm{x}-\bar{\bm{x}}_{k}\|^{2}}{2\sigma_{\mathrm{p}}^{2}}\Big),
\qquad \sigma_{\mathrm{p}}=0.05,\quad (A_{1},A_{2},A_{3},A_{4})=(1,-1,1,-1),
\label{eq:rd_exact}
\end{equation}
with centers $\bar{\bm{x}}_{1}=(-0.6,0.4)$, $\bar{\bm{x}}_{2}=(0.5,-0.3)$, $\bar{\bm{x}}_{3}=(0.2,0.7)$ and $\bar{\bm{x}}_{4}=(-0.4,-0.6)$, one per quadrant. Since every center lies at least $0.3$ from $\partial\Omega$, the exact solution satisfies the homogeneous Neumann condition to within $10^{-6}$, which is negligible at the accuracy levels reported below. The problem is a steady state of the Allen-Cahn equation, each peak representing a localized phase domain embedded in an otherwise homogeneous medium; the solution is nearly zero over the bulk and rises to $\pm1$ only within narrow neighborhoods of the centers, where it carries very steep gradients.

Three features make this problem hard for uniform collocation. The four $\sigma_{\mathrm{p}}$-radius disks cover under $1\%$ of the domain, so a uniform collocation set places almost all of its points in the flat region where the residual is already small, while the few points near a peak are too sparse to resolve its gradient. Since the reaction term $u^{3}-u$ is bistable, the optimization landscape admits the spurious low-energy state $u\approx0$, toward which an under-resolved model is naturally drawn. Bistability under purely Neumann data also leaves the solution branch undetermined, so a small set of interior anchors (the center of each peak and four neighbors one peak width away, twenty points in all) is PINNs based to the values of the exact solution with weight $w_{D}=100$ to select the intended branch. The anchors localize the peaks yet do not resolve them: given the branch, an under-resolved peak still produces a large strong-form residual and RAR exploits that signal.

The square is partitioned into four rectangular elements, its quadrants, each carrying $K=8$ Legendre bubble functions per direction. The quadrature order is raised to $Q=80$ nodes per direction, giving eight to ten nodes across the $3\sigma_{\mathrm{p}}$ support of each peak; at the order $Q=24$ used for the singular benchmarks, the peaks would be invisible to the variational residual. Unlike in the preceding examples, the network input is not enriched: the trial space is a multiscale Fourier-feature network acting on $(x,y)$ alone, with four $\tanh$ hidden layers of width $200$, the last three with residual connections. The objective combines the variational residual, the Neumann penalty, the anchor term and the collocation term with weights $(w_v,w_D,w_N,w_f)=(1,100,50,5)$, using $200$ Neumann points per edge. The RAR arm begins from $10{,}000$ uniformly drawn collocation points and, in each of ten rounds, inserts the $300$ points with the largest strong-form residual from a candidate pool of $30{,}000$ uniformly drawn points. It reaches $|\mathcal{T}_{f}|=13{,}000$ after $140{,}000$ Adam iterations and eleven L-BFGS stages. Table~\ref{tab:PDE4_table} reports the errors.

\begin{table}[t]
\centering
\caption{Reaction-diffusion problem. All errors are reported in percent (\%) and bold marks the more accurate arm in each row. The RAR arm grows its collocation set from $10{,}000$ to $13{,}000$ points over ten rounds of $300$ insertions, with $140{,}000$ Adam iterations and eleven L-BFGS stages. The uniform arm is trained on a fixed set of uniformly drawn collocation points without insertion rounds. One training run per arm. Errors are evaluated on a uniform $401\times401$ grid on $[-1,1]^2$ and the $H^{1}$ semi-norm is computed with central differences on the same grid. As $\max|u|=1$, the relative $L^{\infty}$ error equals the absolute one.}
\label{tab:PDE4_table}
\begin{tabular}{lcc}
\toprule
Metric & VC-PINNs (uniform) & VC-PINNs-RAR \\
\midrule
Relative $L^{2}$ error           & $16.00$    & $\mathbf{1.015}$  \\
Relative $L^{\infty}$ error      & $26.50$    & $\mathbf{0.6986}$ \\
Relative $H^{1}$ semi-norm error & $64.46$    & $\mathbf{4.738}$  \\
\bottomrule
\end{tabular}
\end{table}

The diagnostic result is the gap between the moderate $L^{2}$ error of the uniform arm and its very large $H^{1}$ error. That network locates all four peaks and recovers their approximate amplitudes (Fig.~\ref{fig:PDE4_figures}B); however, it does not recover their gradients, so its pointwise error concentrates in the peak cores (Fig.~\ref{fig:PDE4_figures}C, up to about $0.27$) while the smooth background is accurate. Redirecting the same final number of collocation points with the residual indicator reduces the $L^{2}$, $H^{1}$ and $L^{\infty}$ errors by factors of roughly $16$, $14$ and $38$, respectively (table~\ref{tab:PDE4_table}). The remaining error collapses to a low-amplitude pattern spread over the domain rather than a set of cores (Fig.~\ref{fig:PDE4_figures}I, up to about $0.007$). Successive rounds cluster points at the four centers (Fig.~\ref{fig:PDE4_figures}F). That the largest gain appears in the gradient measure confirms that the refinement resolves the peak structure itself rather than merely lowering the global error and since both runs end at $13{,}000$ points the difference is attributable to placement alone.

\begin{figure}[!htbp]
\centering
%---------------- Row 1 ----------------%
\begin{subfigure}{0.32\textwidth}
    \textbf{A}\\[0.5mm]
    \includegraphics[width=\linewidth]{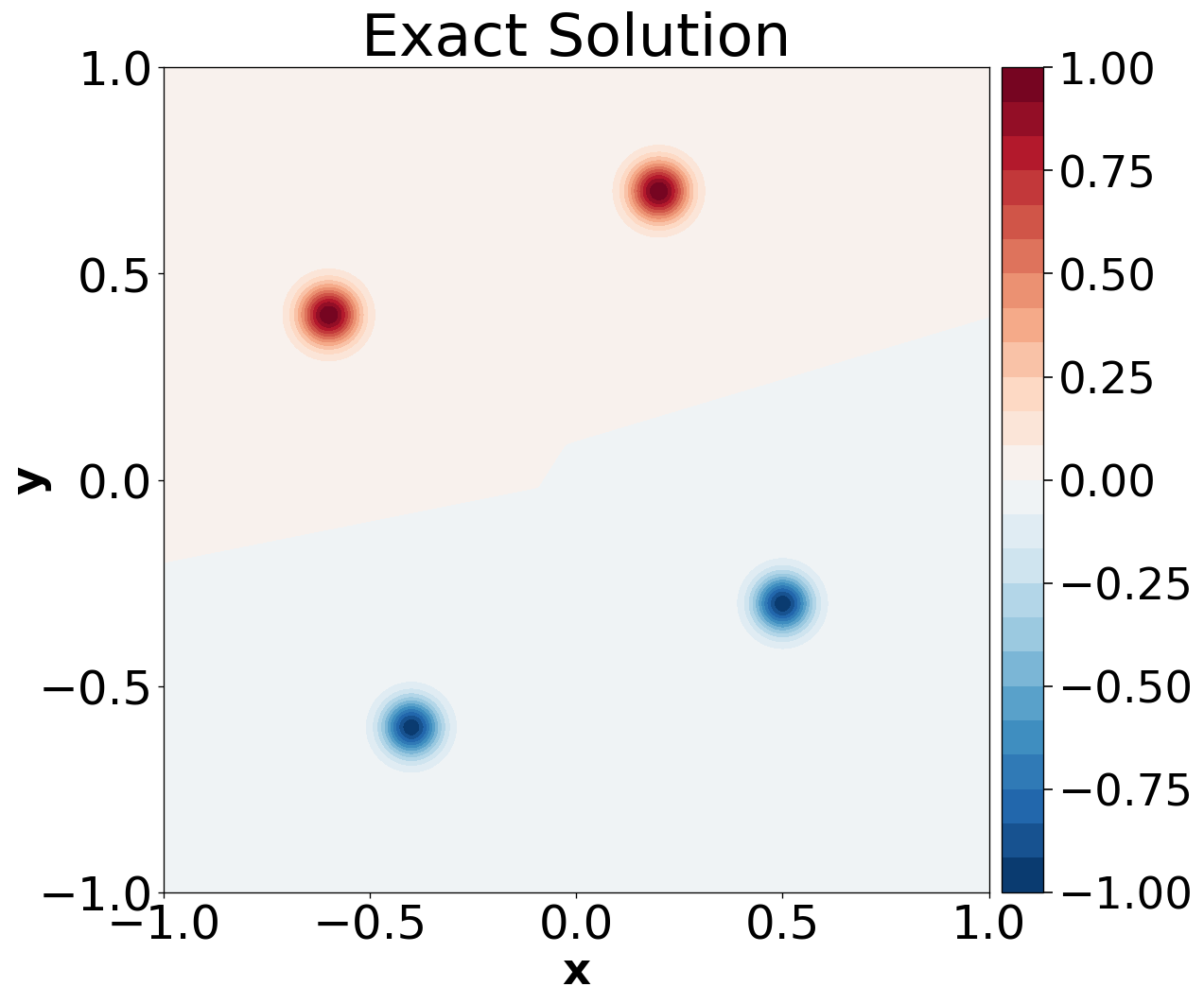}
\end{subfigure}
\hfill
\begin{subfigure}{0.32\textwidth}
    \textbf{B}\\[0.5mm]
    \includegraphics[width=\linewidth]{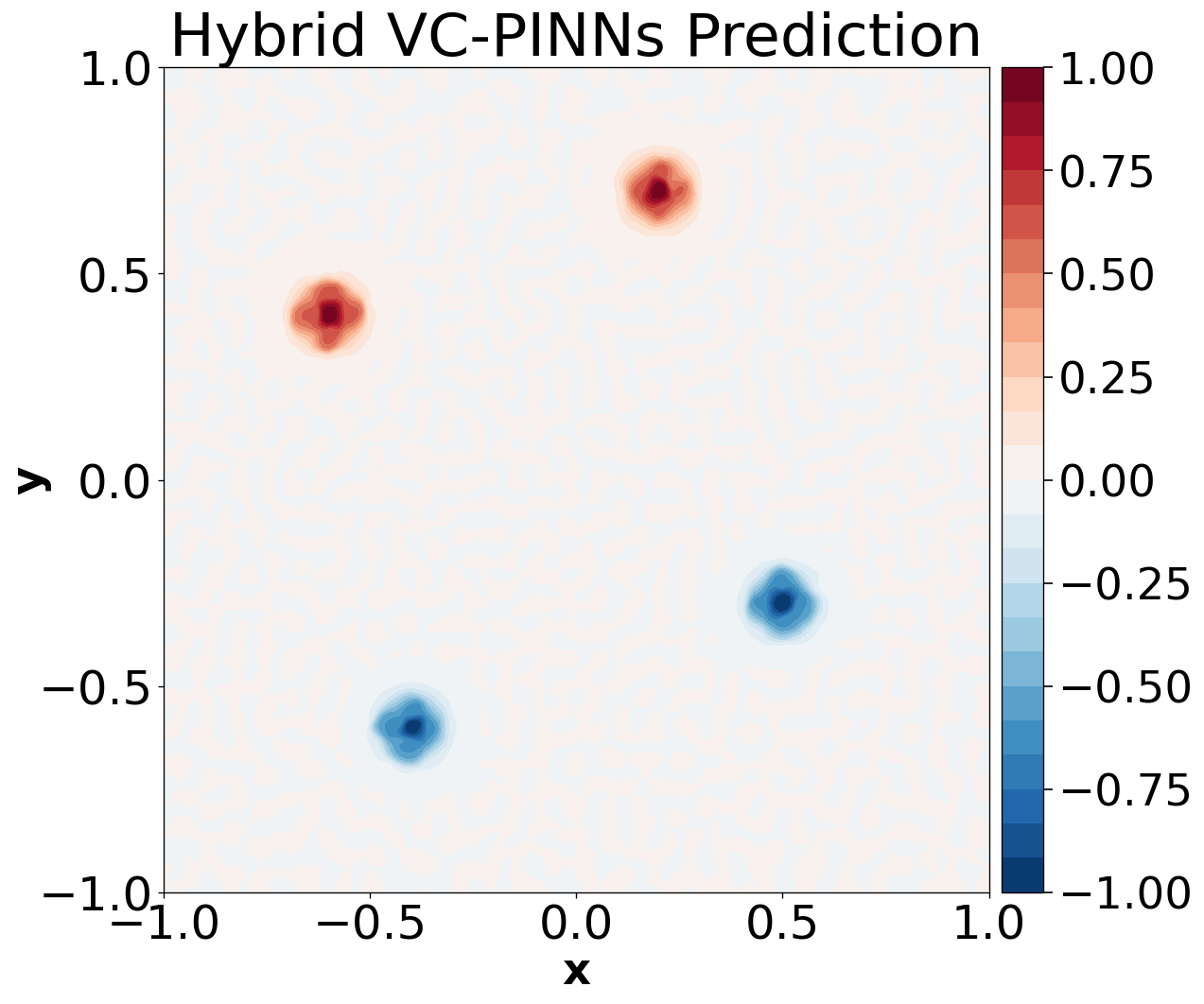}
\end{subfigure}
\hfill
\begin{subfigure}{0.32\textwidth}
    \textbf{C}\\[0.5mm]
    \includegraphics[width=\linewidth]{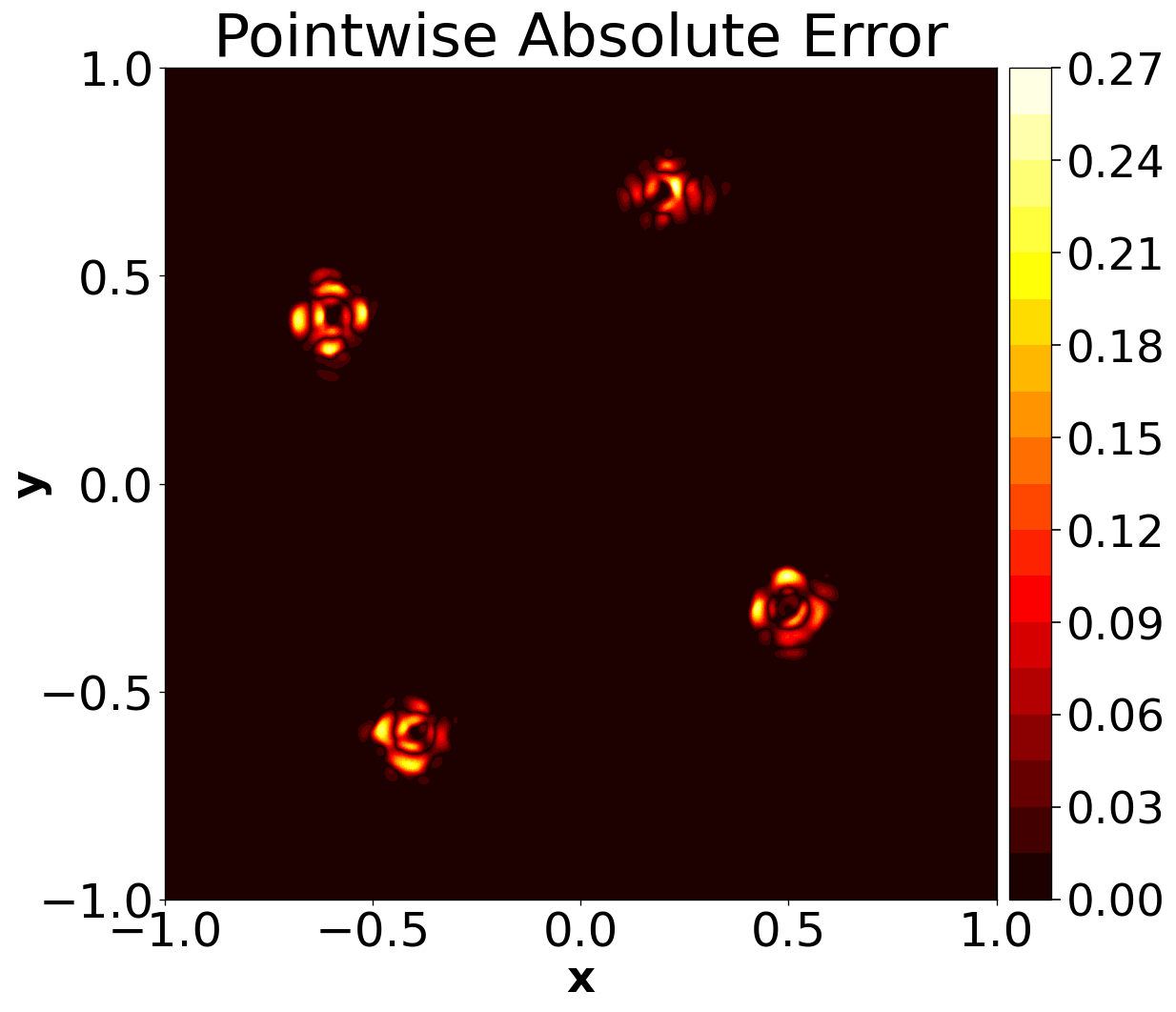}
\end{subfigure}

%---------------- Row 2 ----------------%
\begin{subfigure}{0.32\textwidth}
    \textbf{D}\\[0.5mm]
    \includegraphics[width=\linewidth]{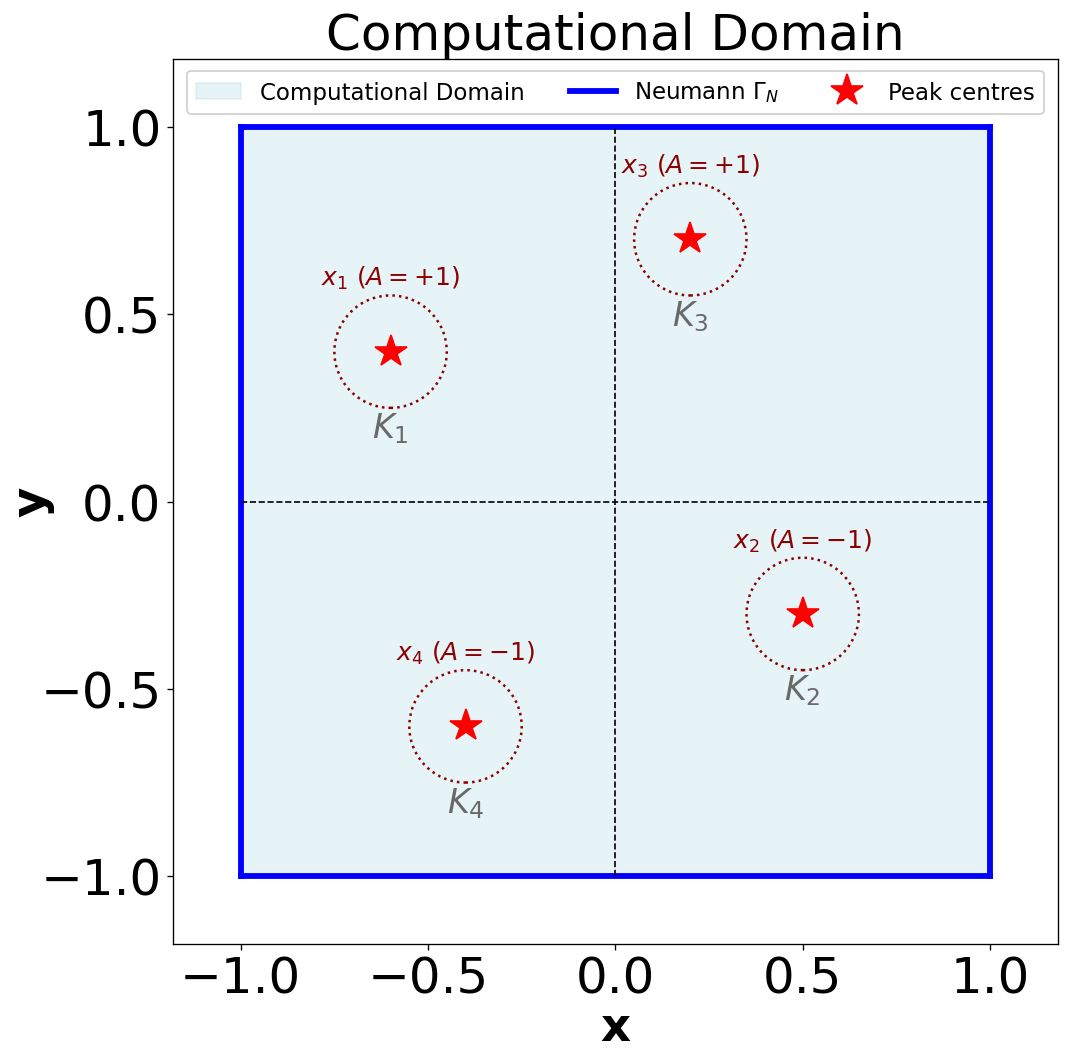}
\end{subfigure}
\hfill
\begin{subfigure}{0.32\textwidth}
    \textbf{E}\\[0.5mm]
    \includegraphics[width=\linewidth]{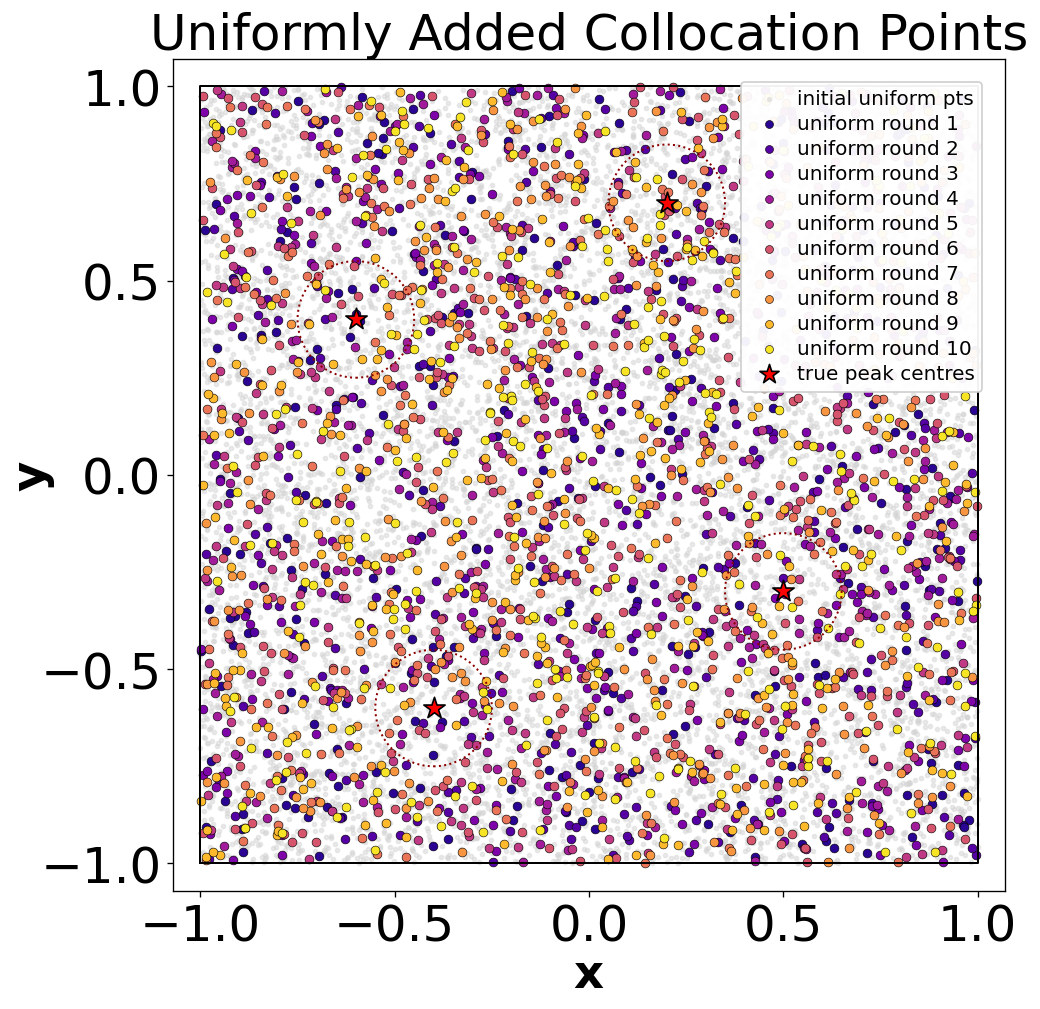}
\end{subfigure}
\hfill
\begin{subfigure}{0.32\textwidth}
    \textbf{F}\\[0.5mm]
    \includegraphics[width=\linewidth]{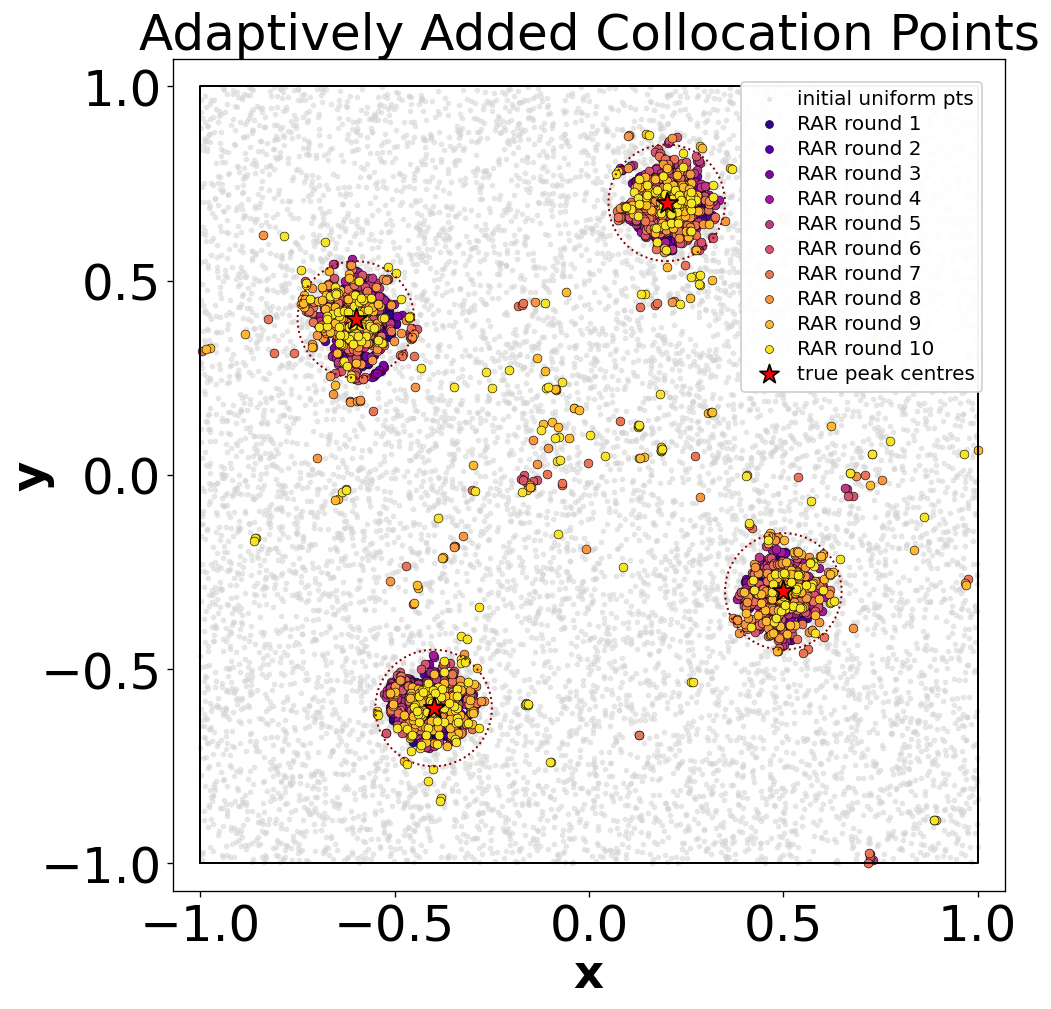}
\end{subfigure}

%---------------- Row 3 ----------------%
\begin{subfigure}{0.32\textwidth}
    \textbf{G}\\[0.5mm]
    \includegraphics[width=\linewidth]{non_linear_exact_solution.png}
\end{subfigure}
\hfill
\begin{subfigure}{0.32\textwidth}
    \textbf{H}\\[0.5mm]
    \includegraphics[width=\linewidth]{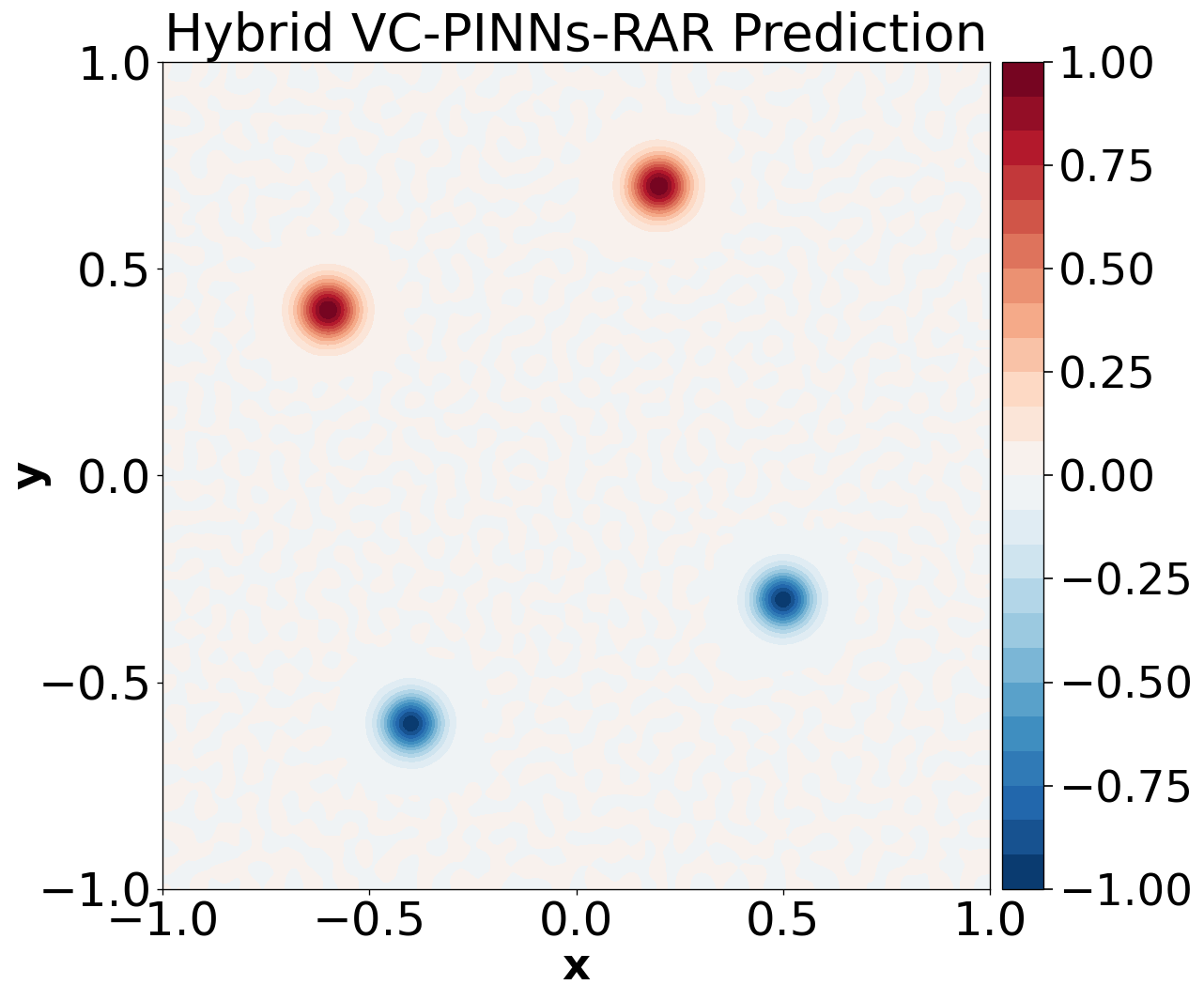}
\end{subfigure}
\hfill
\begin{subfigure}{0.32\textwidth}
    \textbf{I}\\[0.5mm]
    \includegraphics[width=\linewidth]{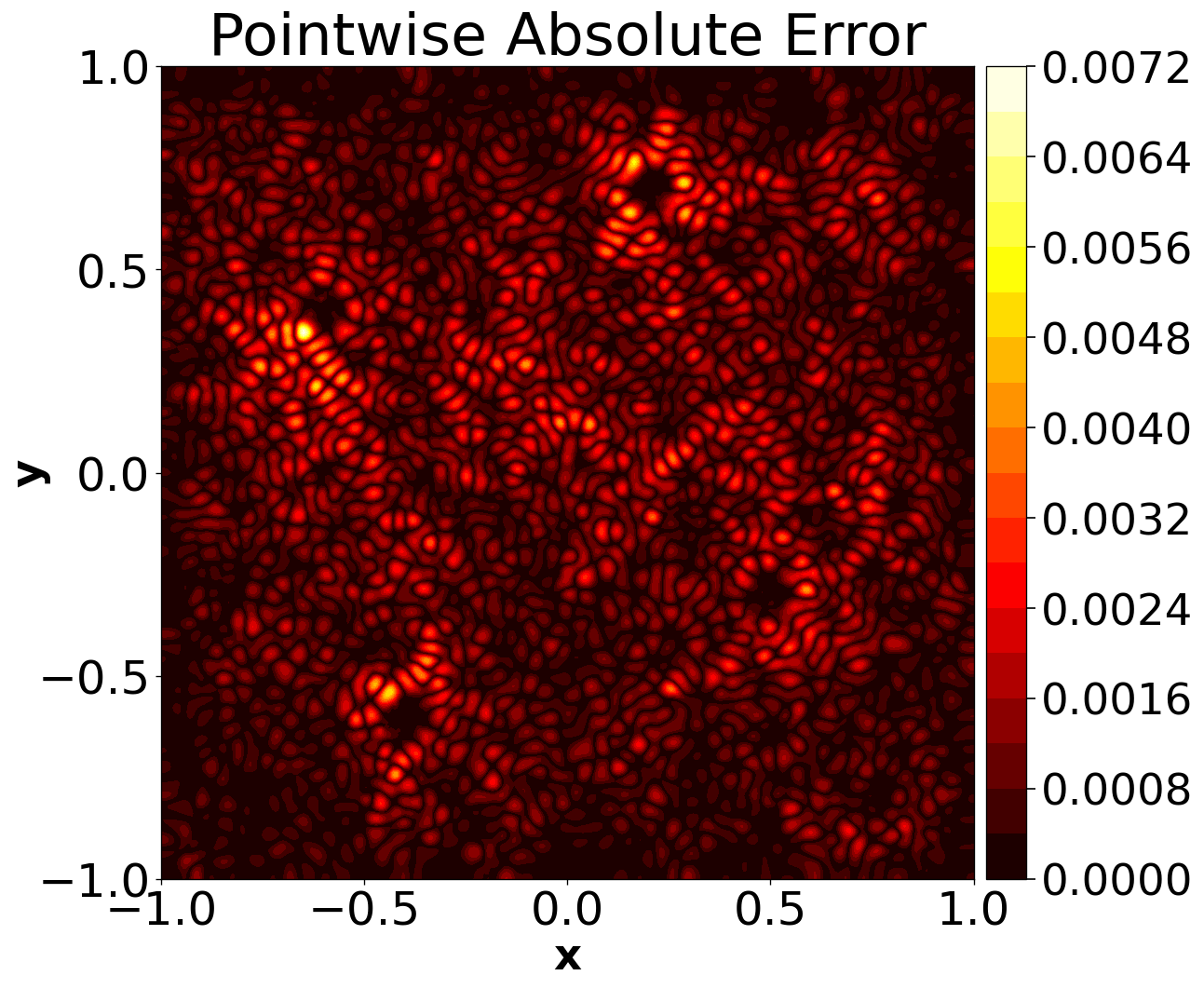}
\end{subfigure}

%---------------- Row 4 ----------------%
\begin{subfigure}{0.44\textwidth}
    \textbf{J}\\[0.5mm]
    \includegraphics[width=\linewidth]{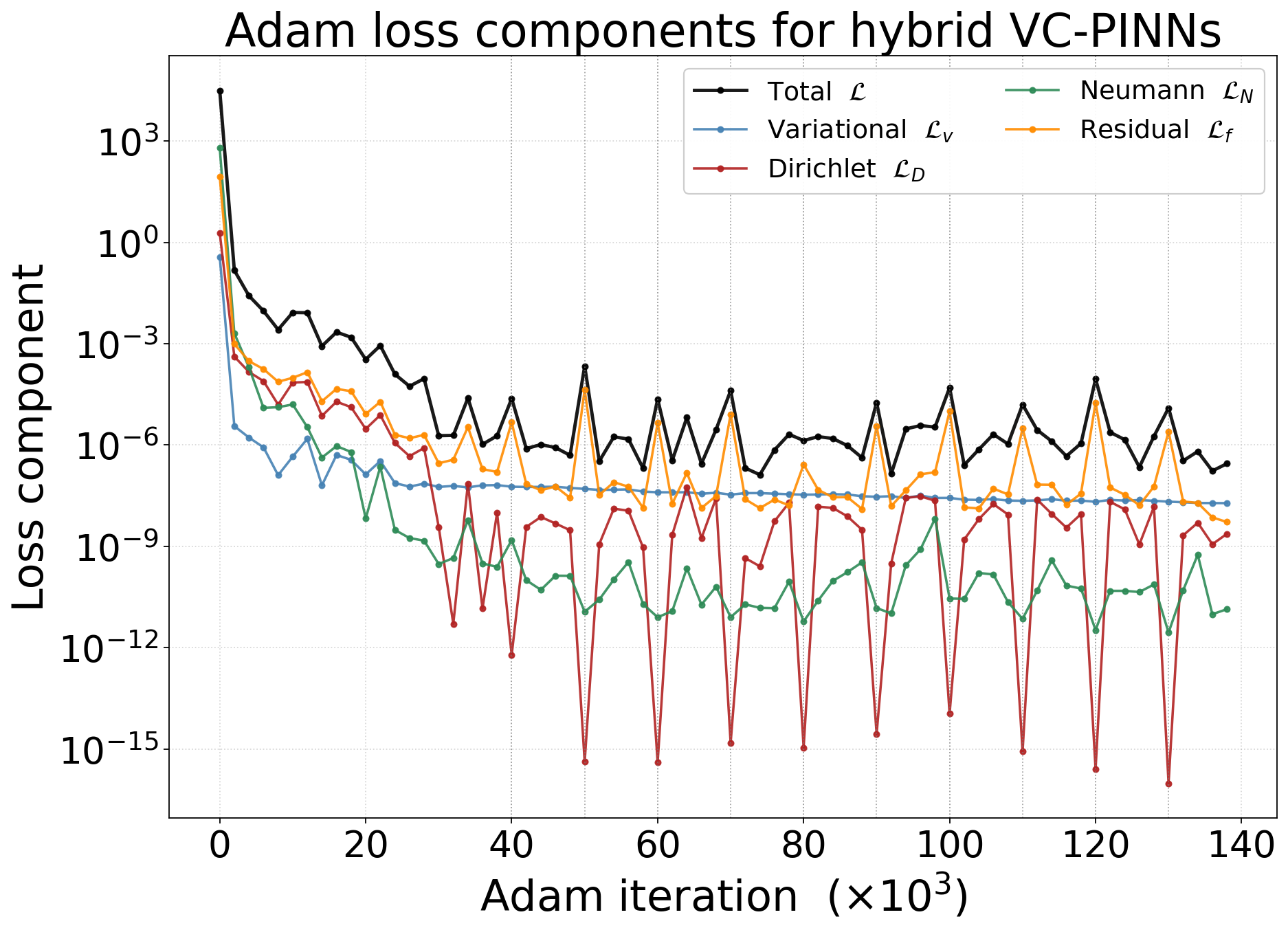}
\end{subfigure}
\hspace{0.04\textwidth}
\begin{subfigure}{0.44\textwidth}
    \textbf{K}\\[0.5mm]
    \includegraphics[width=\linewidth]{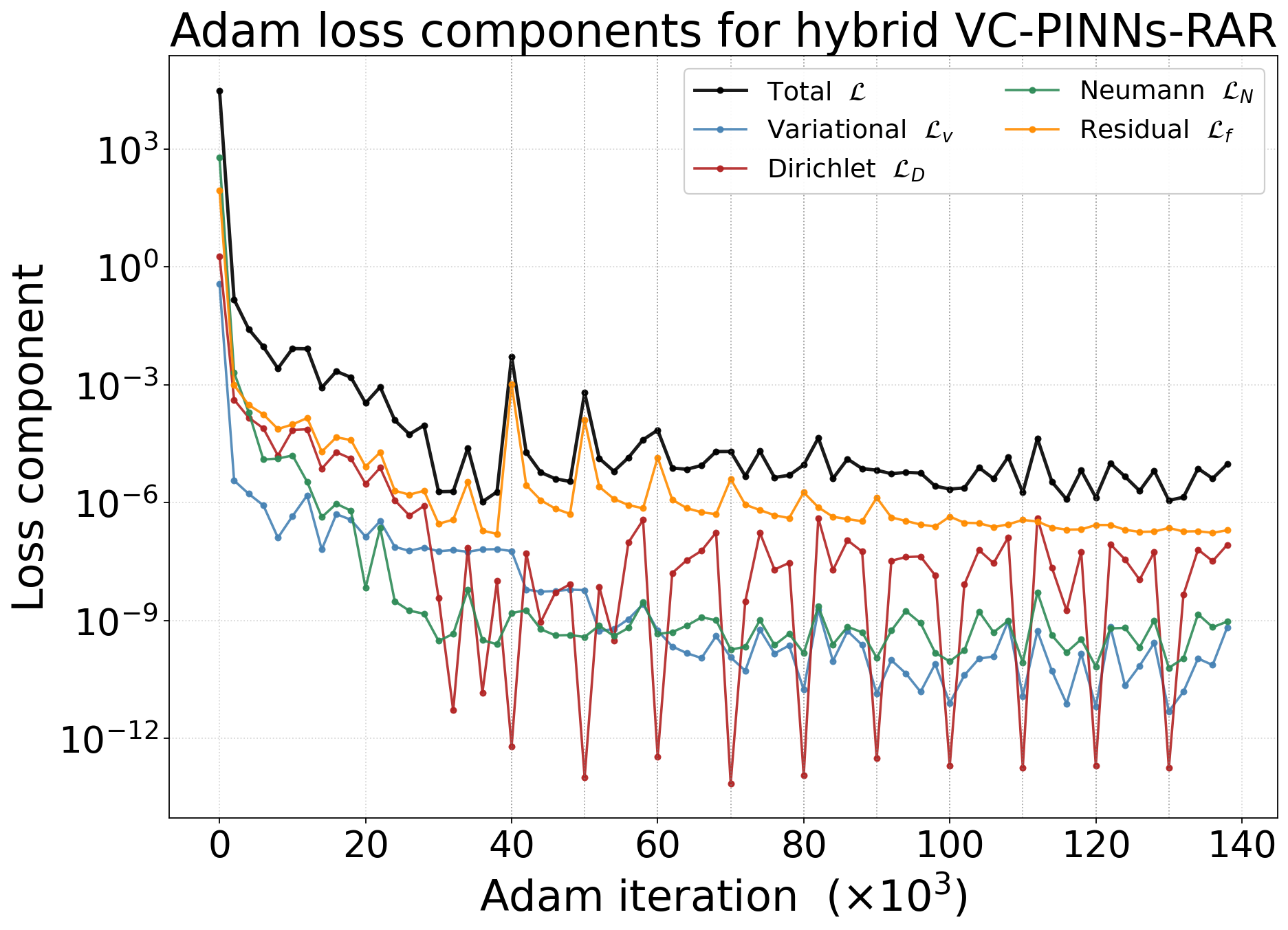}
\end{subfigure}
\caption{Reaction-diffusion problem with four localized peaks. \textbf{(A-C)} Exact solution, VC-PINNs (uniform) prediction and pointwise absolute error. \textbf{(D-F)} Domain with the four peak centers and element partition, uniform collocation set and adaptive collocation set after ten rounds, $|\mathcal{T}_{f}|=10{,}000\to13{,}000$. \textbf{(G-I)} Exact solution, VC-PINNs-RAR prediction and pointwise absolute error. \textbf{(J-K)} Adam loss components for the uniform and the RAR arm.}\label{fig:PDE4_figures}
\end{figure}

%PDE5
\subsection{Monotone semilinear reaction-diffusion on an L-shaped domain}\label{semilinear_PDE5}
The preceding examples each carry one kind of singular feature. This one carries two of different dimension at the same time and admits no closed from solution, so it tests whether a single scalar indicator can distribute refinement between a curve and a point without any manufactured solution to guide it.

We solve the singularly perturbed semilinear problem
\begin{equation}
-\varepsilon^{2}\Delta u+u^{3}=1 \ \text{ in } \Omega, \qquad
u=0 \ \text{ on } \Gamma_{D}, \qquad
\partial_{\bm{n}}u=0 \ \text{ on } \Gamma_{N},
\label{eq:semilinear_pde}
\end{equation}
with $\varepsilon=0.05$ on $\Omega=(-1,1)^{2}\setminus([0,1]\times[0,1])$, where $\Gamma_{D}=\{(x,0):0\le x\le1\}\cup\{(0,y):0\le y\le1\}$ are the two re-entrant edges and $\Gamma_{N}$ the outer boundary. The right hand side is the constant $f\equiv1$. Nothing is manufactured, the network is given only $f$, the boundary data and the geometry.

Writing $A(u)=-\varepsilon^{2}\Delta u+u^{3}$ on $V=\{v\in H^{1}(\Omega):v|_{\Gamma_D}=0\}$, the pointwise identity $(u^{3}-v^{3})(u-v)=(u-v)^{2}(u^{2}+uv+v^{2})\ge0$ gives $\langle A(u)-A(v),u-v\rangle\ge\varepsilon^{2}|u-v|_{H^{1}}^{2}$, so $A$ is strictly monotone and being coercive and hemi continuous it admits a unique weak solution by the Browder-Minty theorem. No branch has to be selected, so this example uses no interior anchors, the refinement locates the features from the residual alone.

The solution has two distinct sharp features. Away from $\Gamma_{D}$, the diffusion term is negligible and $u$ approaches the root $u\equiv1$ of $u^{3}=1$. Along $\Gamma_{D}$, the homogeneous data forces a boundary layer of width $\frac{\varepsilon}{\sqrt{3}}\approx0.029$, a codimension one feature of total length $2$. At the origin, the interior angle $\tfrac{3\pi}{2}$ produces the corner behavior $u\sim r^{2/3}$, a point feature. A scalar indicator must allocate points between a curve and a point using one number per candidate. As in section~\ref{plus_shaped_PDE2}, the leading corner mode is supplied by appending $r^{2/3}$ to the input, so the network acts on $(x,y,r^{2/3})$, the layer receives no analogous treatment and must be learned.

Errors are measured against a quadratic ($\mathbb{P}_{2}$) finite element reference computed on a mesh graded toward $\Gamma_{D}$ and the corner. It is obtained by Newton's method with continuation in $\varepsilon$ from $0.4$ down to $0.05$, extrapolated across refinement levels and sampled on a $601\times601$ grid. The reference is used only post hoc and never enters the loss, grid points within $10^{-3}$ of the corner are excluded from the $H^{1}$ semi-norm, where the reference gradient is itself unbounded.

A second check bypasses the reference entirely. Along a straight portion of $\Gamma_{D}$ and away from the corner, the solution depends only on the wall distance $\zeta$ and \eqref{eq:semilinear_pde} reduces to $-\varepsilon^{2}u''+u^{3}=1$ with $u(0)=0$, $u(\infty)=1$. Multiplying by $u'$ and integrating once gives the first integral
\begin{equation}
\tfrac{1}{2}\varepsilon^{2}(u')^{2}=\tfrac14\big(u^{4}-4u+3\big)
=\tfrac14(1-u)^{2}\big(u^{2}+2u+3\big),
\label{eq:first_integral}
\end{equation}
which gives the exact wall gradient $u'(0)=\sqrt{3/2}/\varepsilon=24.4949\ldots$ and the tail $1-u\sim1.4558\,e^{-\sqrt{3}\zeta/\varepsilon}$. This profile solves the two dimensional equation exactly in the strip, so away from the corner both the reference and the network must reproduce it.

Both arms use a multiscale Fourier-feature network with six residual hidden layers of width $128$ and Fourier scales $\{\sigma_{1},\sigma_{2},\sigma_{3}\}=\{1,10,30\}$, the L-shaped domain is tiled by three rectangular elements (unit squares), each carrying $K=8$ Legendre bubble functions and $Q=48$ Gauss-Legendre nodes per direction. The loss weights are $(w_v,w_D,w_N,w_f)=(1,50,50,10)$, with $600$ Dirichlet and $1000$ Neumann points. Both arms begin from $4000$ collocation points, insert $300$ per round over twelve rounds to $|\mathcal{T}_{f}|=7600$ and train for $150{,}000$ Adam iterations and thirteen L-BFGS stages, drawing candidates from the same admissible region $\{r\ge2\times10^{-3}\}$. They differ only in selection, the uniform arm draws its $300$ points at random from that region, whereas the RAR arm scores a candidate pool of $60{,}000$ points by $\rho=|-\varepsilon^{2}\Delta\hat{u}+\hat{u}^{3}-1|$ and retains the largest $300$.

\begin{table}[t]
\centering
\caption{Semilinear problem. All errors are reported in percent (\%) and bold marks the more accurate arm in each row. Both arms use the same collocation schedule ($|\mathcal{T}_{f}|=4000\to7600$), $150{,}000$ Adam iterations and thirteen L-BFGS stages, with no interior anchors; one training run per arm. Errors are measured against the finite element reference on its $601\times601$ grid and the $H^{1}$ semi-norm excludes grid points within $10^{-3}$ of the corner. As $0\le u\le1$ with $\max u=1$, the relative $L^{\infty}$ error equals the absolute one.}
\label{tab:PDE_5_table}
\begin{tabular}{lcc}
\toprule
Metric & VC-PINNs (uniform) & VC-PINNs-RAR \\
\midrule
Relative $L^{2}$ error           & $2.403$ & $\mathbf{0.1904}$ \\
Relative $L^{\infty}$ error      & $55.85$ & $\mathbf{3.546}$ \\
Relative $H^{1}$ semi-norm error & $39.00$ & $\mathbf{3.366}$ \\
\bottomrule
\end{tabular}
\end{table}

The spread between the three error measures of the uniform arm is diagnostic rather than incidental. As $u\approx1$ over most of $\Omega$ and departs from $1$ only in a strip of width $O(\varepsilon)$, a network that reproduces the constant state yet fails to resolve the layer (Fig.~\ref{fig:PDE5}B and C) already attains a small $L^{2}$ error, that norm does not weight a set of measure $O(\varepsilon)$, whereas the $H^{1}$ semi-norm does and it reports a relative gradient error of $39.00\%$. Over twelve rounds the uniform arm reduces its relative $L^{2}$ error from $4.382\%$ to $2.403\%$ and its $H^{1}$ semi-norm error from $50.35\%$ to $39.00\%$ however, only $135$ of its $3600$ inserted points lie within $0.05$ of $\Gamma_{D}$ (Fig.~\ref{fig:PDE5}E), close to the $3.4\%$ share of the domain that this strip occupies, residual-based insertion places $909$ points there.

The layer check based on~\eqref{eq:first_integral} is more direct than any norm. At the stations $x=0.3$, $0.5$ and $0.7$ on $\{(x,0):0<x<1\}$, the uniform arm returns wall gradients of $12.62$, $16.75$ and $15.03$ against the exact $24.4949$, too low by $48.5\%$, $31.6\%$ and $38.6\%$ and its wall normal profile deviates from the exact one by up to $25.93\%$. The RAR arm returns $24.50$, $24.51$ and $24.47$, within $0.12\%$ of the exact value and its profile deviates by less than $0.42\%$. With the same point and iteration budget, it also reduces the relative $L^{2}$, $L^{\infty}$ and $H^{1}$ semi-norm errors by factors of roughly $13$, $16$ and $12$ (table~\ref{tab:PDE_5_table}, Fig.~\ref{fig:PDE5}C and I). Since the layer check does not use the finite element reference, it is evidence that the network has resolved the layer rather than matched a particular discretization.

The point distribution shows how one scalar handles two feature types. Early rounds are dominated by the layer, inserted points forming a dense band along both segments of $\Gamma_{D}$ together with a tight cluster at the origin (Fig.~\ref{fig:PDE5}F), later rounds spread into the bulk once the layer residual has fallen to the background level. This is the amplitude ordered behavior of section~\ref{plus_shaped_PDE2}, here operating across features of different dimension. Inserted points affect training only since the objective retains a collocation term alongside the variational one, in a purely variational formulation, points ranked by $|R(\hat{u})|$ would have nothing to act on and refinement would require an element level estimator whose spatial resolution is coarser than the layer itself.

\begin{figure}[!htbp]
\centering
%---------------- Row 1 ----------------%
\begin{subfigure}{0.32\textwidth}
    \textbf{A}\\[0.5mm]
    \includegraphics[width=\linewidth]{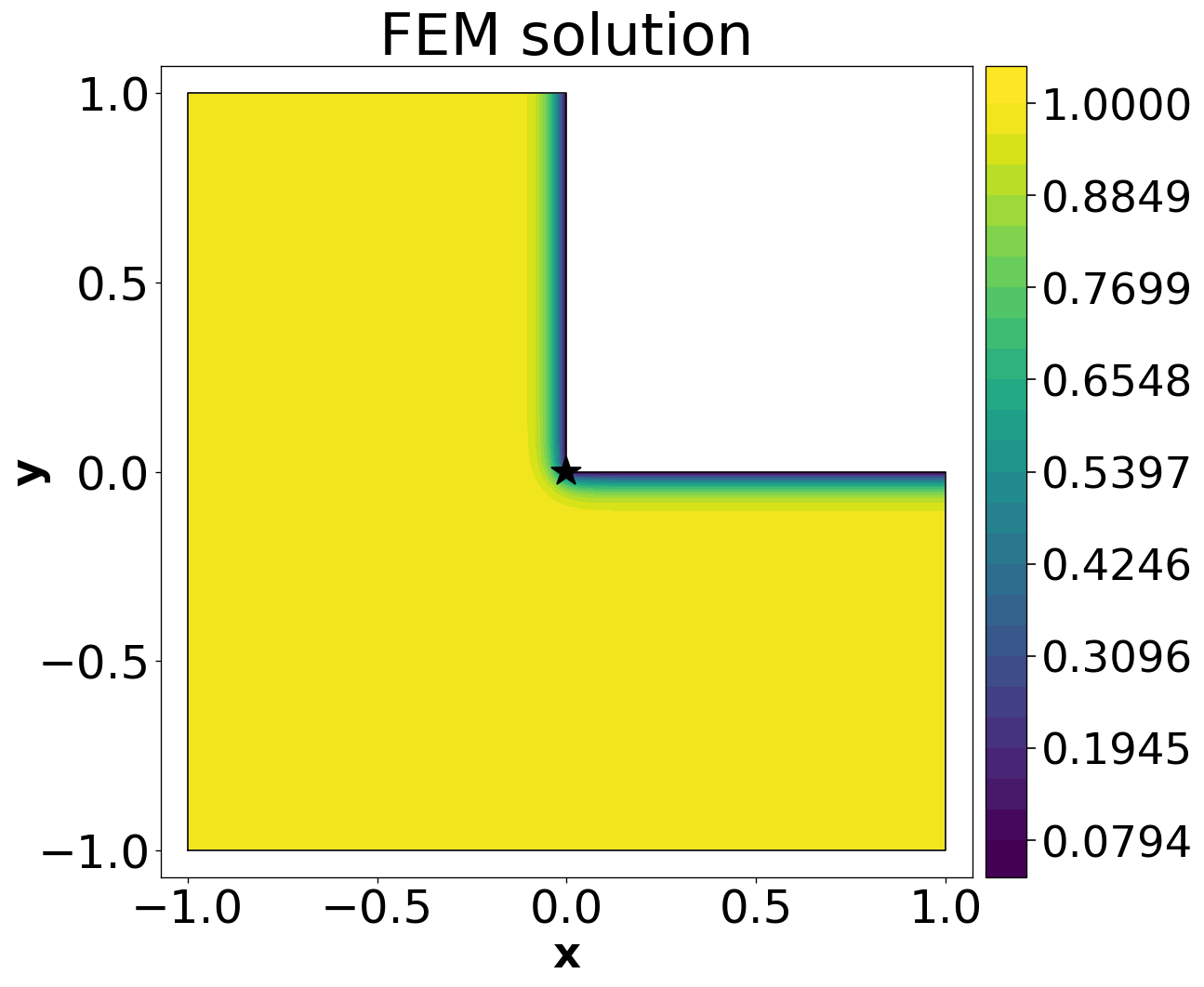}
\end{subfigure}
\hfill
\begin{subfigure}{0.32\textwidth}
    \textbf{B}\\[0.5mm]
    \includegraphics[width=\linewidth]{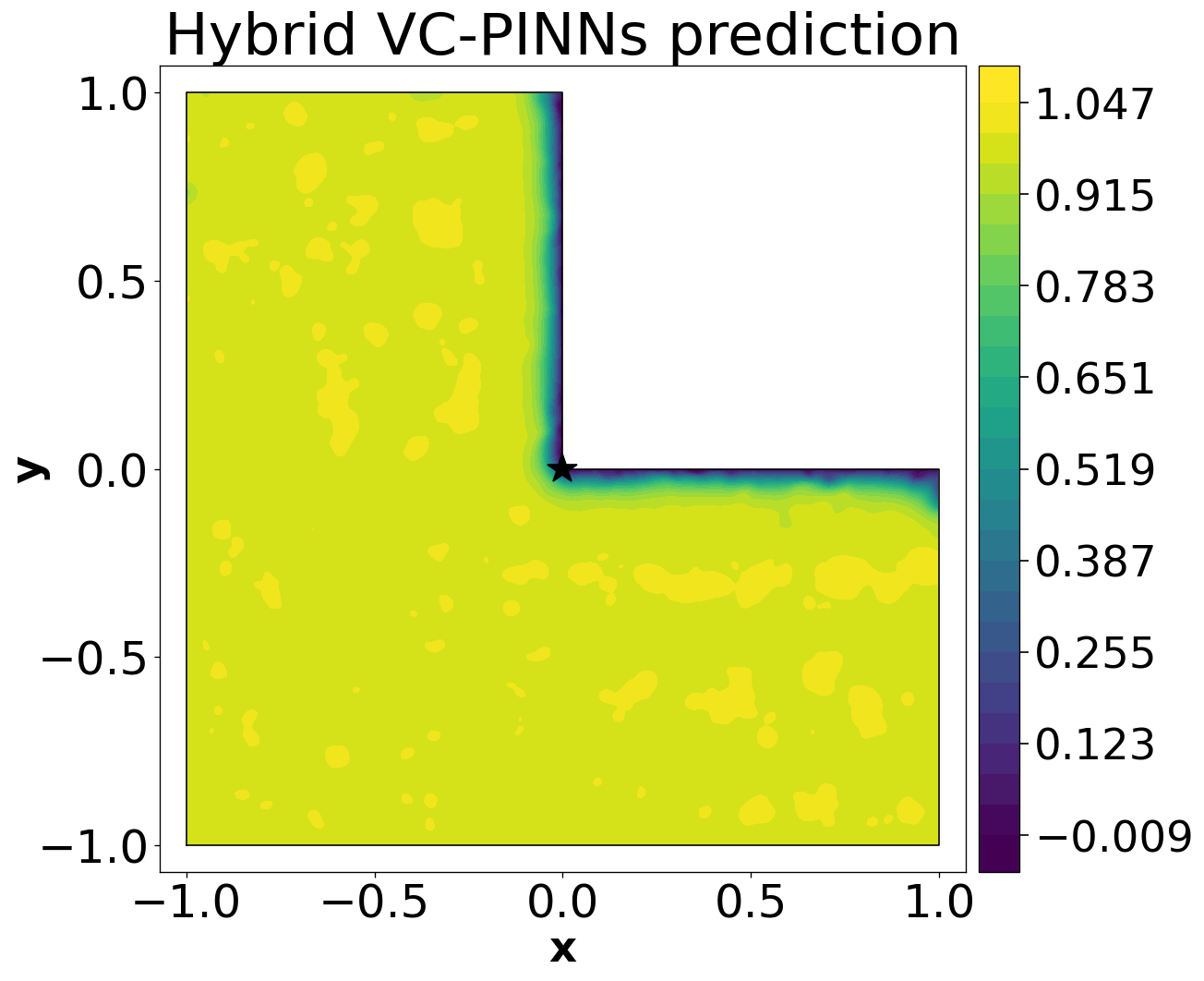}
\end{subfigure}
\hfill
\begin{subfigure}{0.32\textwidth}
    \textbf{C}\\[0.5mm]
    \includegraphics[width=\linewidth]{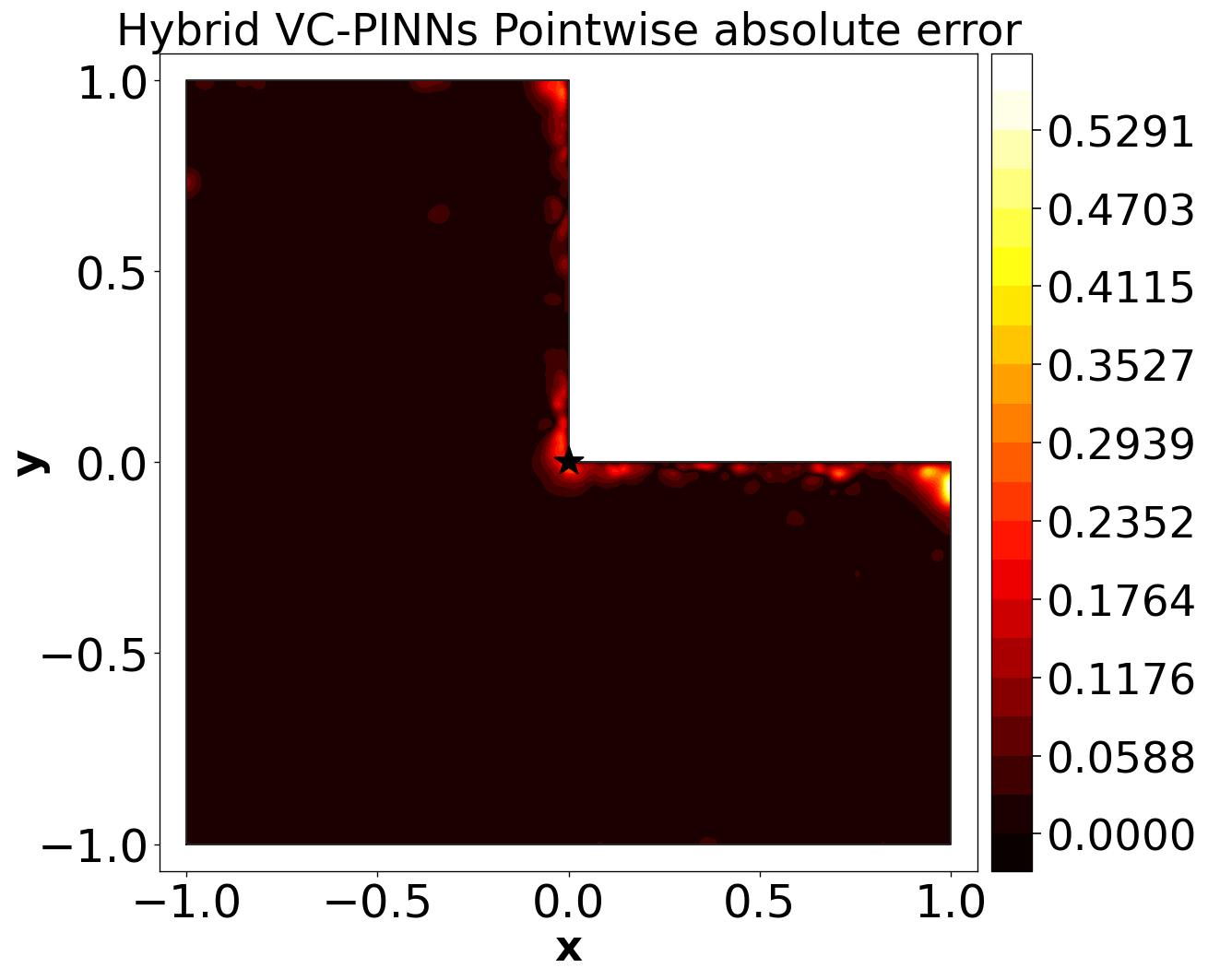}
\end{subfigure}

%---------------- Row 2 ----------------%
\begin{subfigure}{0.32\textwidth}
    \textbf{D}\\[0.5mm]
    \includegraphics[width=\linewidth]{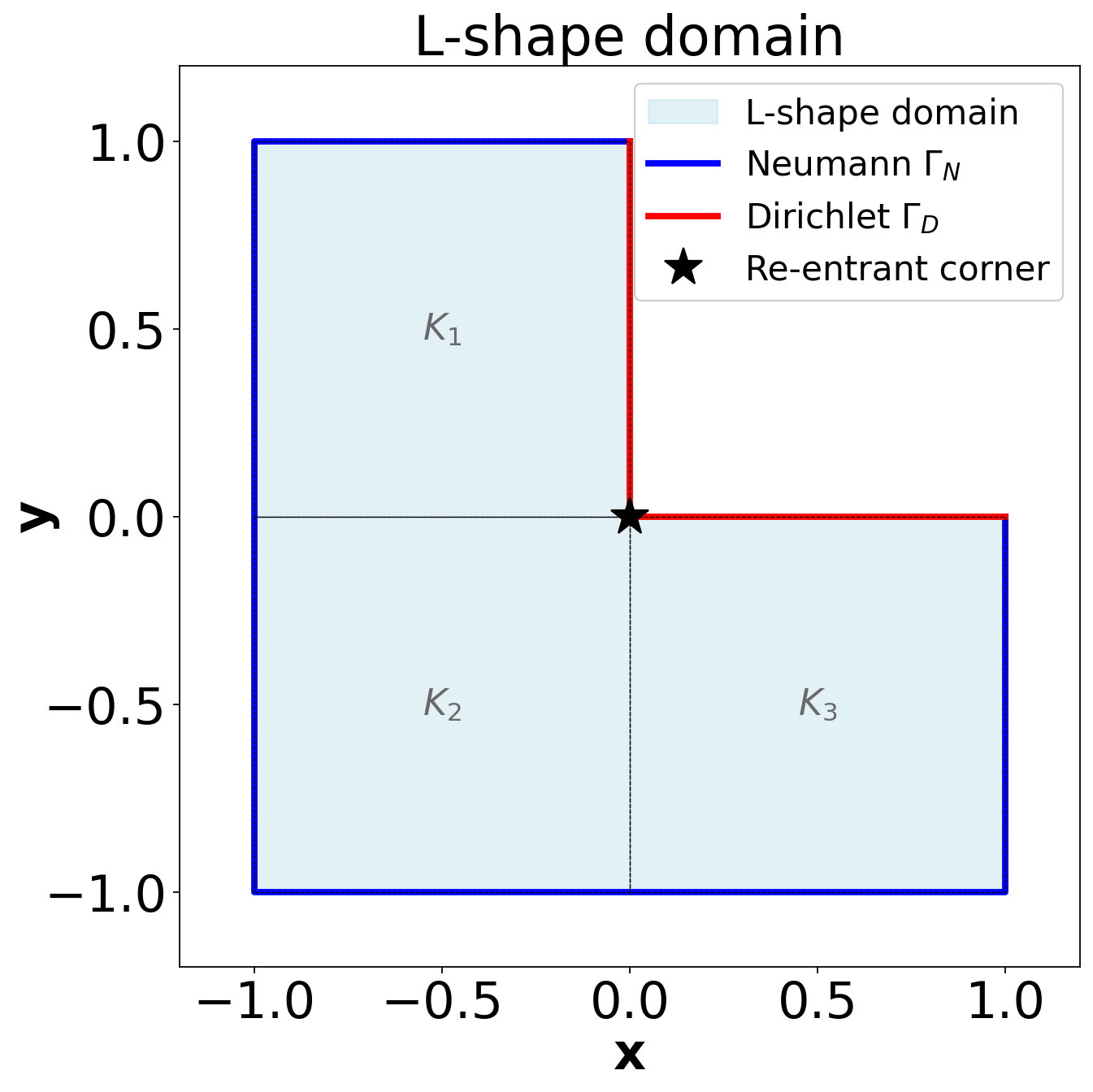}
\end{subfigure}
\hfill
\begin{subfigure}{0.32\textwidth}
    \textbf{E}\\[0.5mm]
    \includegraphics[width=\linewidth]{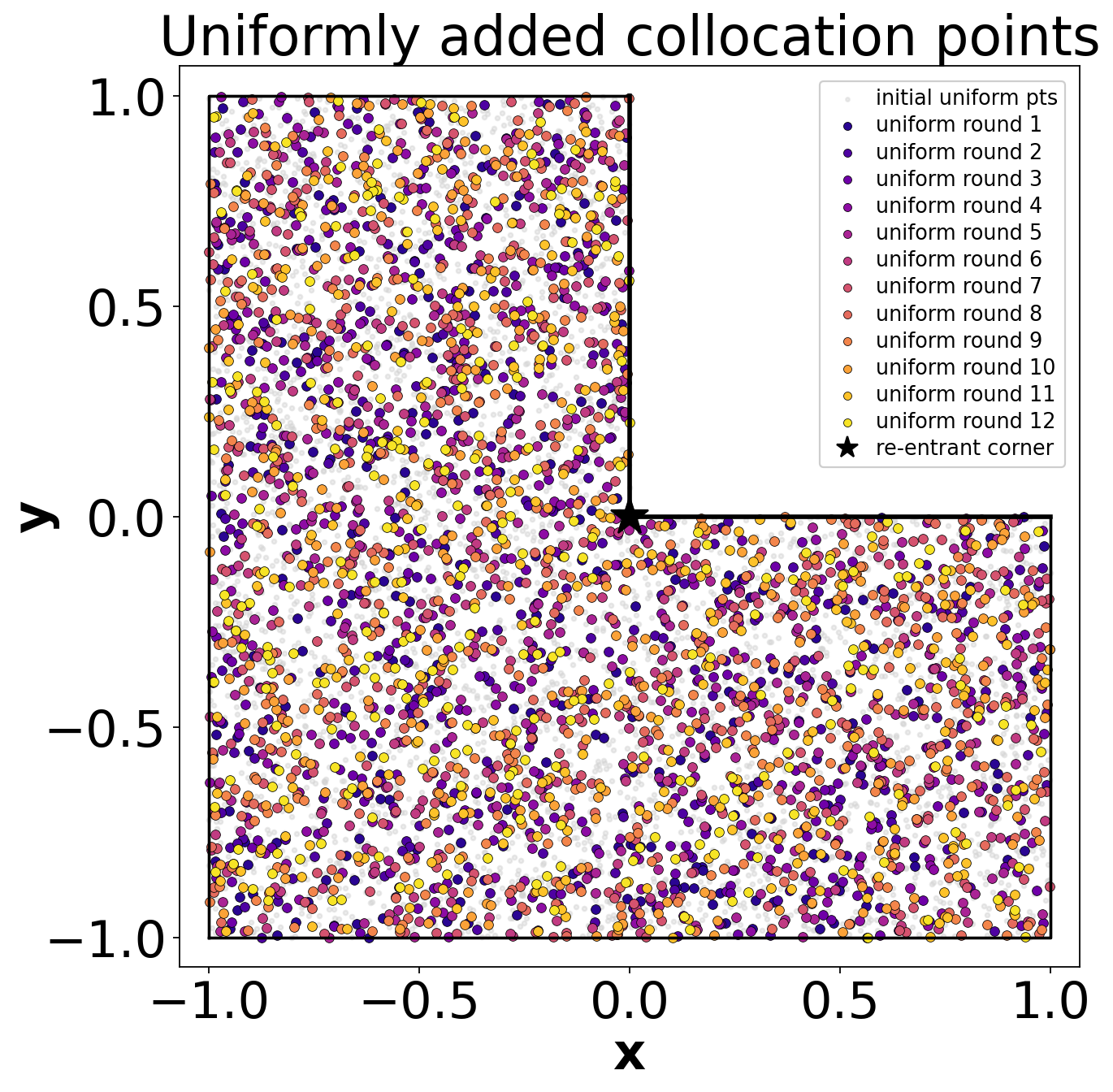}
\end{subfigure}
\hfill
\begin{subfigure}{0.32\textwidth}
    \textbf{F}\\[0.5mm]
    \includegraphics[width=\linewidth]{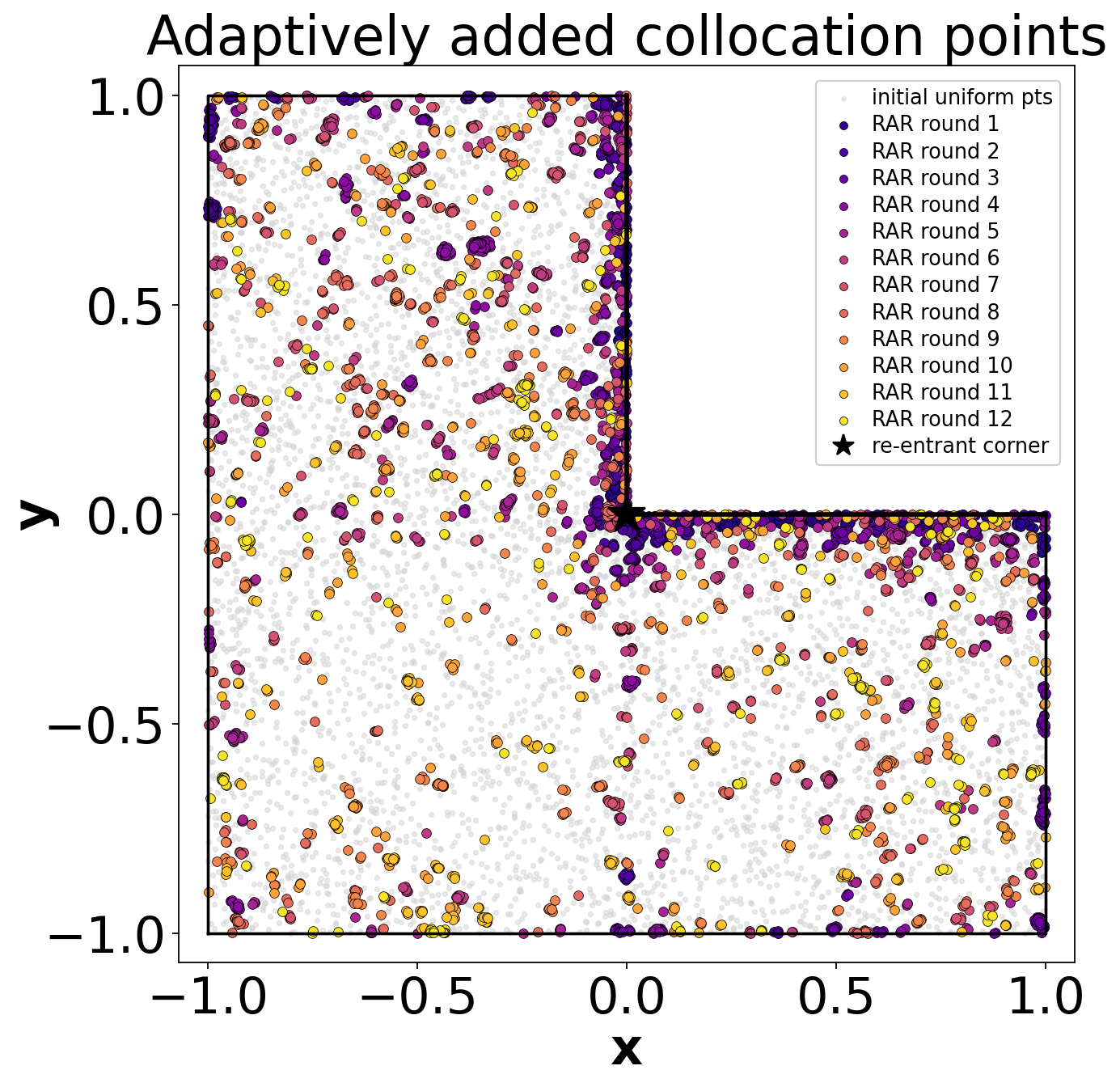}
\end{subfigure}

%---------------- Row 3 ----------------%
\begin{subfigure}{0.32\textwidth}
    \textbf{G}\\[0.5mm]
    \includegraphics[width=\linewidth]{semilinear_fem_solution.png}
\end{subfigure}
\hfill
\begin{subfigure}{0.32\textwidth}
    \textbf{H}\\[0.5mm]
    \includegraphics[width=\linewidth]{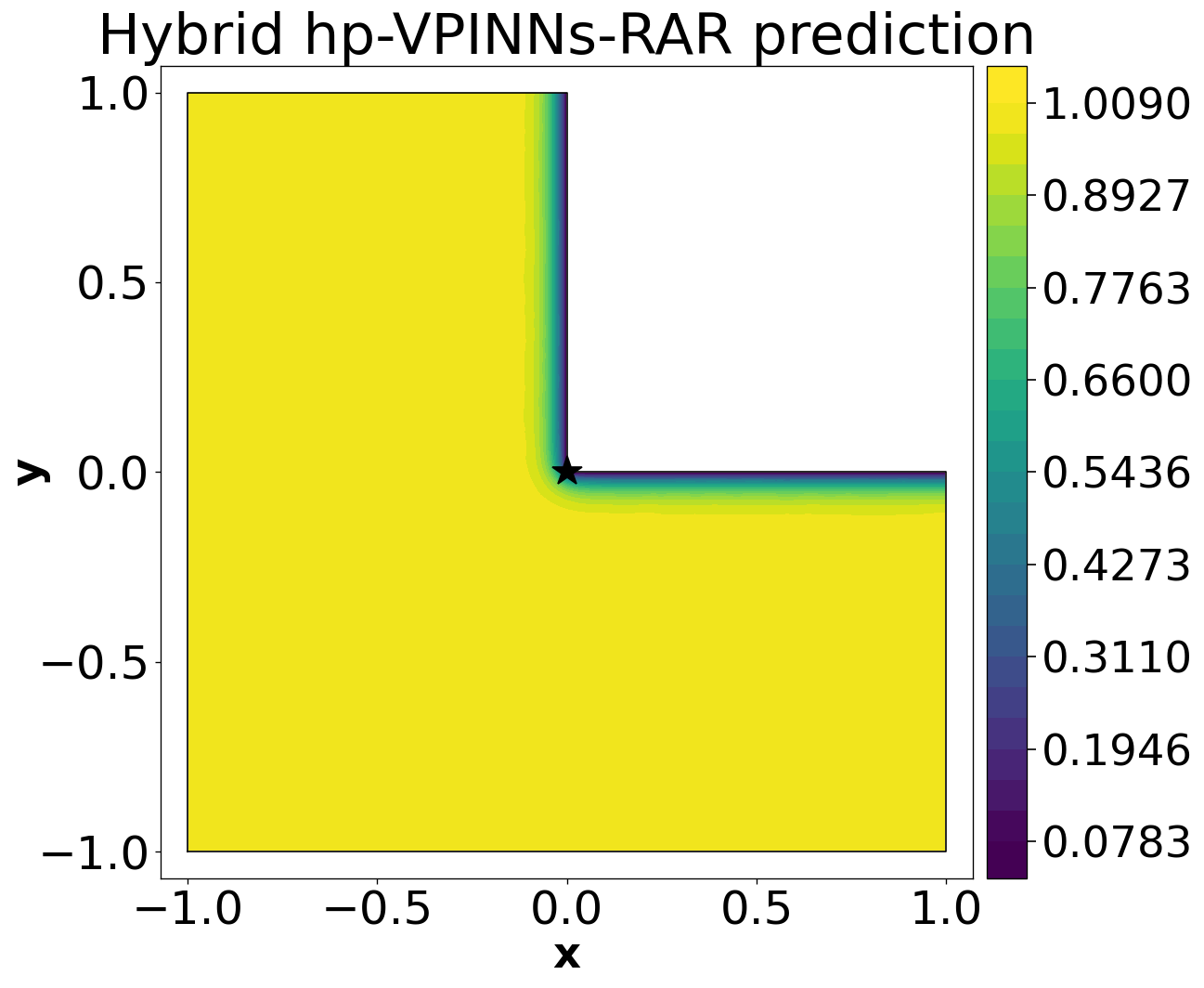}
\end{subfigure}
\hfill
\begin{subfigure}{0.32\textwidth}
    \textbf{I}\\[0.5mm]
    \includegraphics[width=\linewidth]{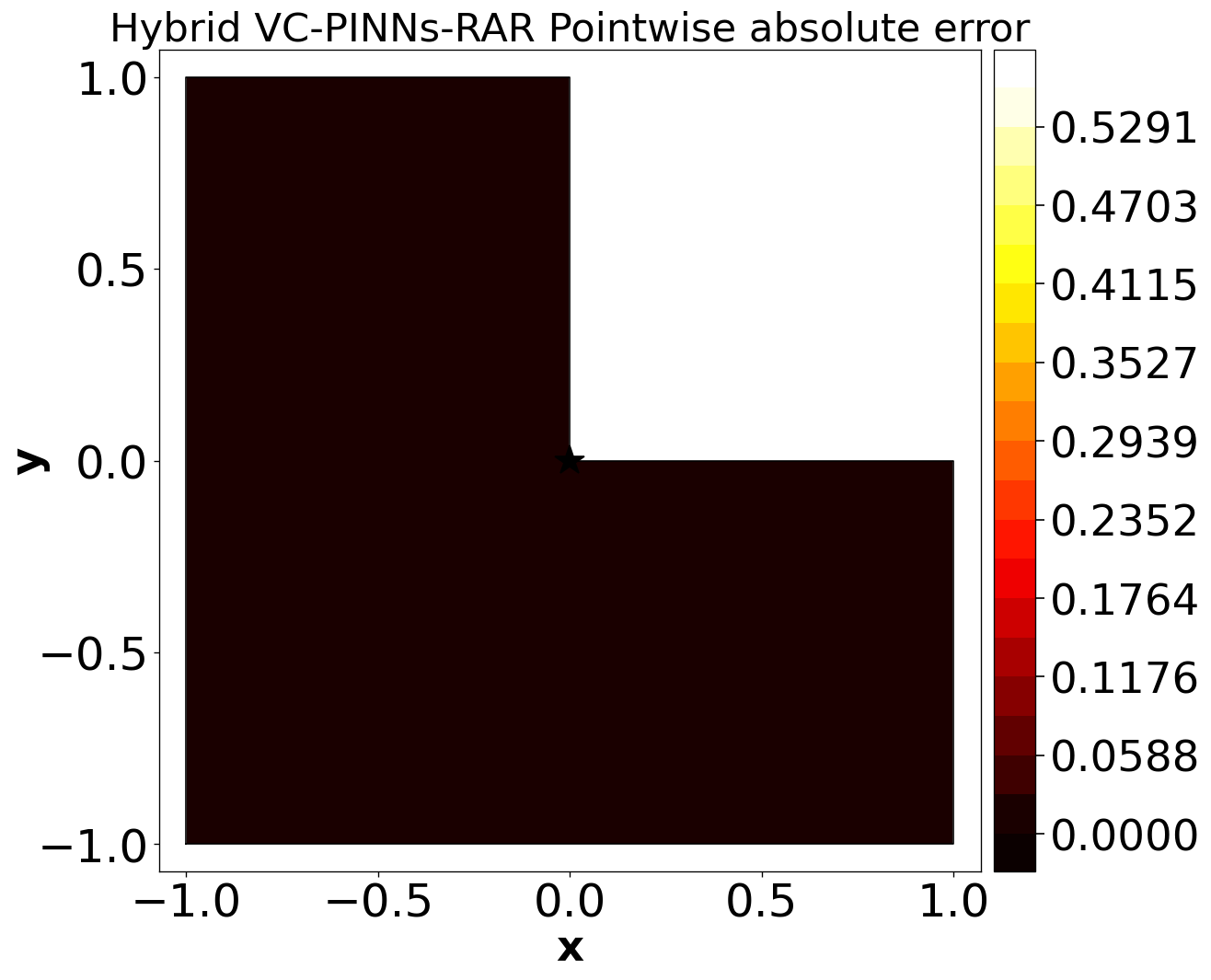}
\end{subfigure}

%---------------- Row 4 ----------------%
\begin{subfigure}{0.44\textwidth}
    \textbf{J}\\[0.5mm]
    \includegraphics[width=\linewidth]{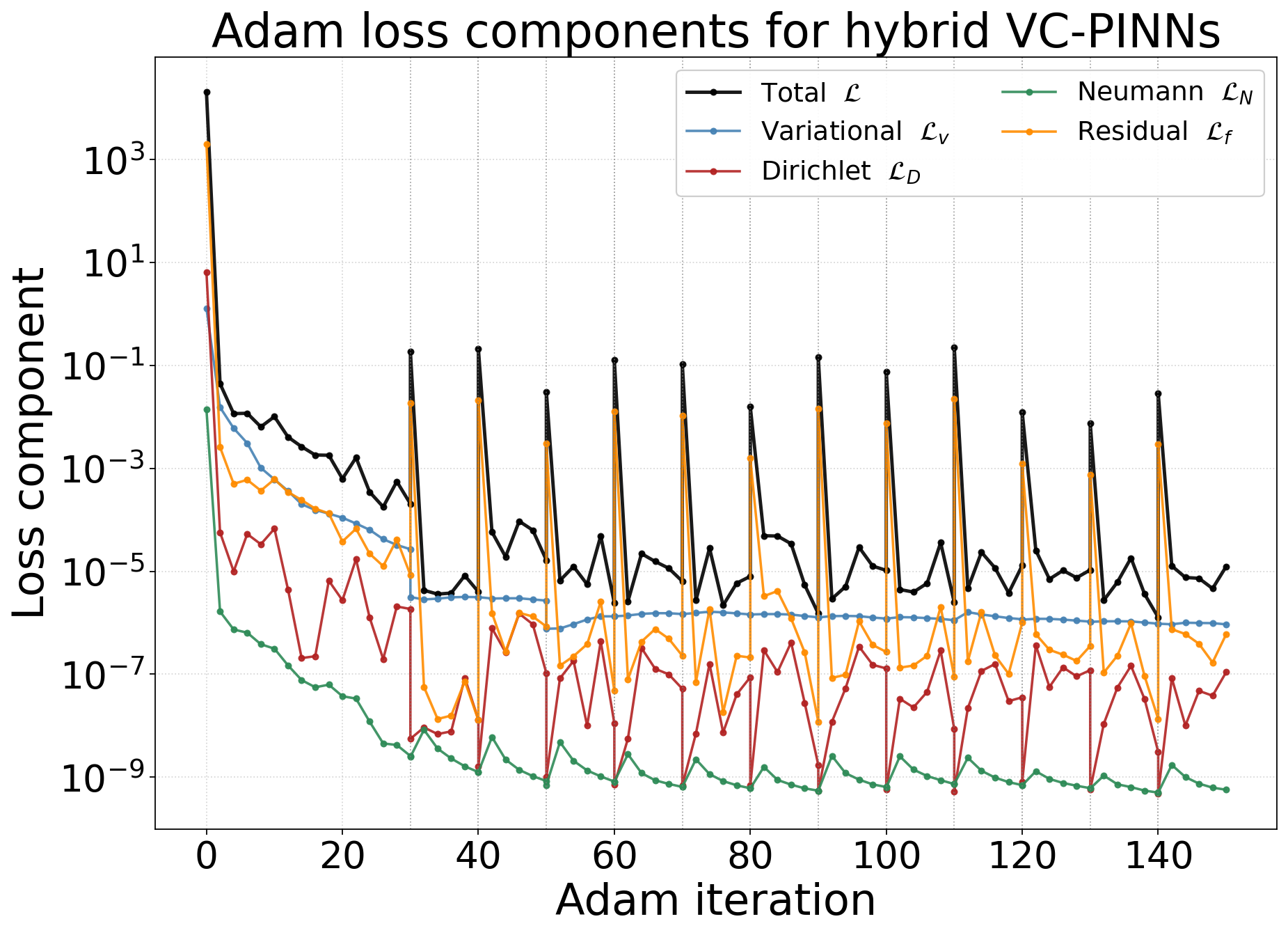}
\end{subfigure}
\hspace{0.04\textwidth}
\begin{subfigure}{0.44\textwidth}
    \textbf{K}\\[0.5mm]
    \includegraphics[width=\linewidth]{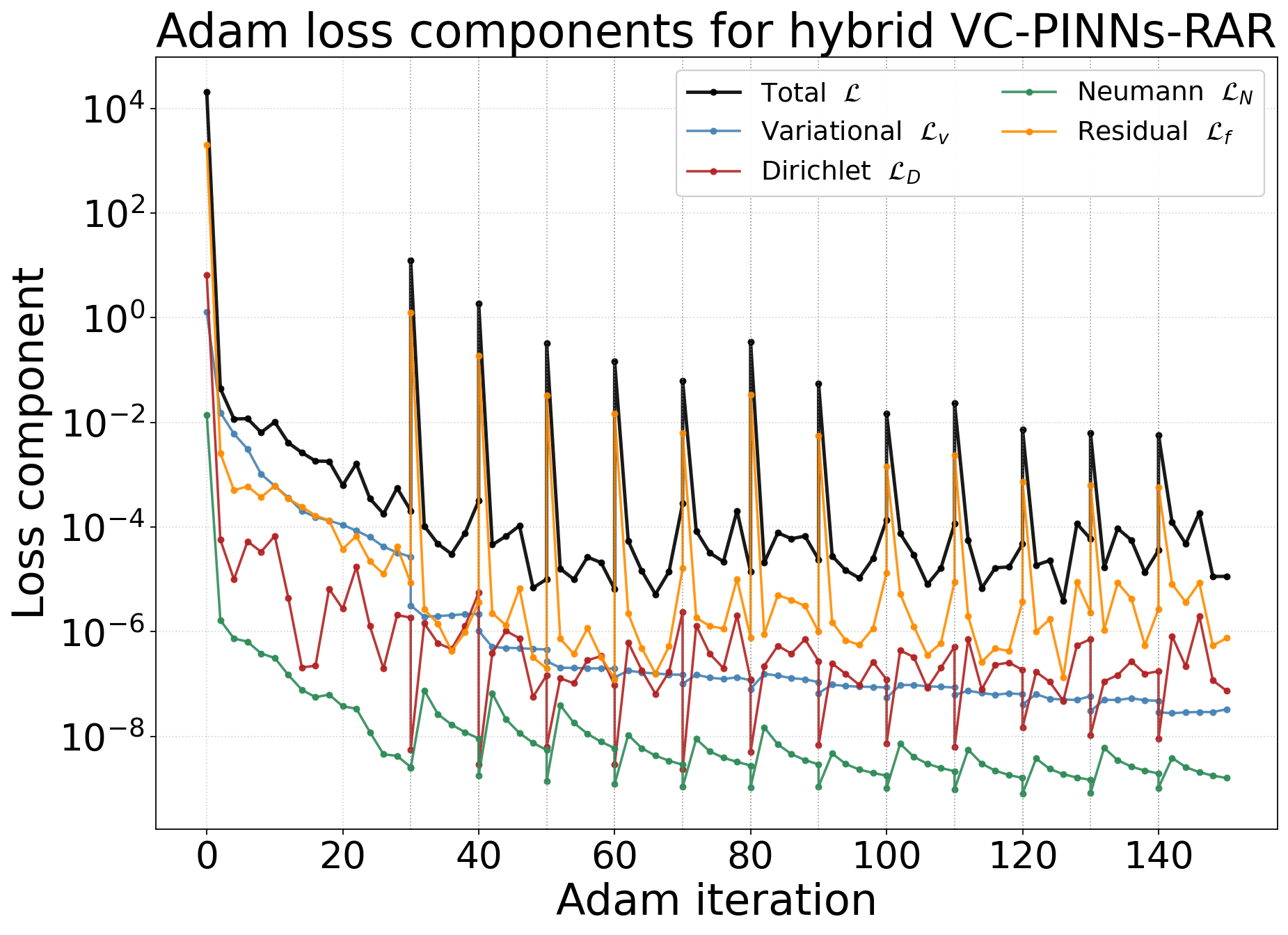}
\end{subfigure}
\caption{Semilinear problem on the L-shaped domain. \textbf{(A-C)} Finite element reference, VC-PINNs (uniform) prediction and pointwise absolute error against the reference. \textbf{(D-F)} Domain with boundary conditions and element partition, uniform collocation set and adaptive collocation set after twelve rounds, $|\mathcal{T}_{f}|=4000\to7600$. \textbf{(G-I)} Finite element reference, VC-PINNs-RAR prediction and pointwise absolute error. \textbf{(J-K)} Adam loss components for the uniform and the RAR arm.}\label{fig:PDE5}
\end{figure}

%PDE6
\subsection{The classical Motz problem}\label{motz_PDE6}
Each preceding singular example draws its singularity from the geometry, a slit or a re-entrant corner. The Motz problem removes that source entirely. The domain is a convex rectangle and the solution is not in $H^{2}(\Omega)$, since the boundary condition changes type at an interior point of a straight edge. It is a standard benchmark for singularity resolving numerical methods~\cite{LuHuLi2004,Li2006}. Here it tests whether RAR responds to the singular behavior of the solution rather than to geometric cues.

We consider $\Delta u = 0$ on $\Omega=(-1,1)\times(0,1)$ with
\begin{equation}
u=0 \ \text{ on } \{(x,0):-1\le x\le0\}, \qquad
u=500 \ \text{ on } \{(1,y):0\le y\le1\}, \qquad
\partial_{\bm{n}}u=0 \ \text{ on } \Gamma_N,
\label{eq:motz_bc}
\end{equation}
where $\Gamma_D$ is the union of the two Dirichlet segments and $\Gamma_N$ comprises the right half of the bottom edge $\{(x,0):0\le x\le1\}$, the top edge and the left edge. The boundary condition changes from Dirichlet to Neumann at the origin, which is an interior point of the bottom edge, the interior angle there is $\pi$. Separation of variables about that point admits solutions $r^{\lambda}\cos(\lambda\theta)$ with $\theta\in[0,\pi]$ satisfying $u=0$ at $\theta=\pi$ and $\frac{\partial u}{\partial \theta}=0$ at $\theta=0$, which forces $\lambda\in\{\tfrac12,\tfrac32,\tfrac52,\dots\}$ and gives the Williams expansion~\cite{Williams1952}
\begin{equation}
u(r,\theta)=\sum_{k\ge0}a_k\,r^{k+1/2}\cos\!\big((k+\tfrac12)\theta\big).
\label{eq:motz_series}
\end{equation}
The leading term has $\lambda_{1}=\tfrac12$, so $u\in H^{s}(\Omega)$ for every $s<3/2$ however, $u\notin H^{2}(\Omega)$ and $|\nabla u|\sim r^{-1/2}$ as $r\to0$. This is the same rate as in the crack problem (section~\ref{creak_slit_PDE1}), produced here by a change of boundary condition type on a straight edge rather than by a slit. The problem originates in electrostatics, $u$ being the potential in a rectangular region held at fixed potentials on part of the boundary and insulated on the rest.

The difficulty is that nothing about $\Omega$ marks the origin as special. A refinement criterion driven by geometry has no signal to act on and a uniform collocation set treats the neighborhood of the origin exactly as it treats the bulk, whereas the error concentrates there. Since $f\equiv0$, the strong-form residual measures purely the failure of the network to be harmonic and is therefore available as an indicator that depends only on the solution.

The lines $x=0$ and $y=0.5$ partition the rectangle into four rectangular elements, so that the singular point lies on an element interface. The network input is enriched with the leading mode alone, so that $\hat{u}$ acts on $(x,y,r^{1/2}\cos(\theta/2))$, since $\theta\in[0,\pi]$ on this domain, the map is single valued and needs no branch cut. Only one term of \eqref{eq:motz_series} is supplied and the rest of the series must be learned, this distinguishes the example from the crack problem of section~\ref{creak_slit_PDE1}, where the enrichment reproduces the exact solution (remark~\ref{rem:crackenrich}). As stated in section~\ref{Sec3.1_trial_enrich}, the network represents $u/500$ ($c_{u}=500$), which is $O(1)$ and the scaling is undone before any error is reported. The two arms are budget matched in collocation count, Adam iterations and L-BFGS stages, each run recording its budget so that a drift between them is detected rather than assumed absent, they differ only in whether the points inserted each round are drawn uniformly or taken as the largest strong-form residuals from a candidate pool.

With no closed from solution available, errors are measured against a quadratic ($\mathbb{P}_{2}$) finite element reference that plays the role of the exact solution. Accuracy at the singularity itself is assessed independently of any pointwise norm by extracting the first four coefficients of \eqref{eq:motz_series}. Using the orthogonality
$\int_0^\pi\cos((k+\tfrac12)\theta)\cos((j+\tfrac12)\theta)\,d\theta=\tfrac{\pi}{2}\delta_{kj}$,
\begin{equation}
a_k=\frac{2}{\pi r_0^{\,k+1/2}}\int_0^{\pi}u(r_0,\theta)\cos\!\big((k+\tfrac12)\theta\big)\,d\theta,
\label{eq:motz_extraction}
\end{equation}
evaluated at $r_0=0.5$ by $400$ node Gauss-Legendre quadrature. The same projection is applied to the finite element reference, so network coefficients, reference coefficients and the published collocation Trefftz values~\cite{LuHuLi2004} are directly comparable.

\begin{table}[t]
\centering
\caption{Motz problem. The first three rows are relative errors in percent (\%), the last row is the leading coefficient $a_{0}$. Bold marks the more accurate arm in each row. Both arms use the same collocation schedule ($|\mathcal{T}_{f}|=1500\to3900$), $150{,}000$ Adam iterations and thirteen L-BFGS stages, one training run per arm. Errors are measured against the quadratic ($\mathbb{P}_{2}$) finite element reference, whose own error floor is $0.000079\%$. As $0\le u\le500$ with $\max u=500$, the relative $L^{\infty}$ error is the absolute error divided by $500$. The leading coefficient is extracted by \eqref{eq:motz_extraction} at $r_0=0.5$, its reference value is $a_0=401.16191$.}
\label{tab:motz}
\begin{tabular}{lcc}
\toprule
Metric & VC-PINNs (uniform) & VC-PINNs-RAR \\
\midrule
Relative $L^{2}$ error           & $0.2407$          &  $\mathbf{0.2096}$   \\
Relative $L^{\infty}$ error      & $0.7872$          & $\mathbf{0.4458}$ \\
Relative $H^{1}$ semi-norm error & $1.666$           & $\mathbf{0.9920}$ \\
\midrule Leading coefficient $a_{0}$ & $399.92050$       & $\mathbf{400.15007}$ \\
\bottomrule
\end{tabular}
\end{table}

\begin{figure}[!tb]
\centering
%---------------- Row 1 ----------------%
\begin{subfigure}{0.32\textwidth}
    \textbf{A}\\[0.5mm]
    \includegraphics[width=\linewidth]{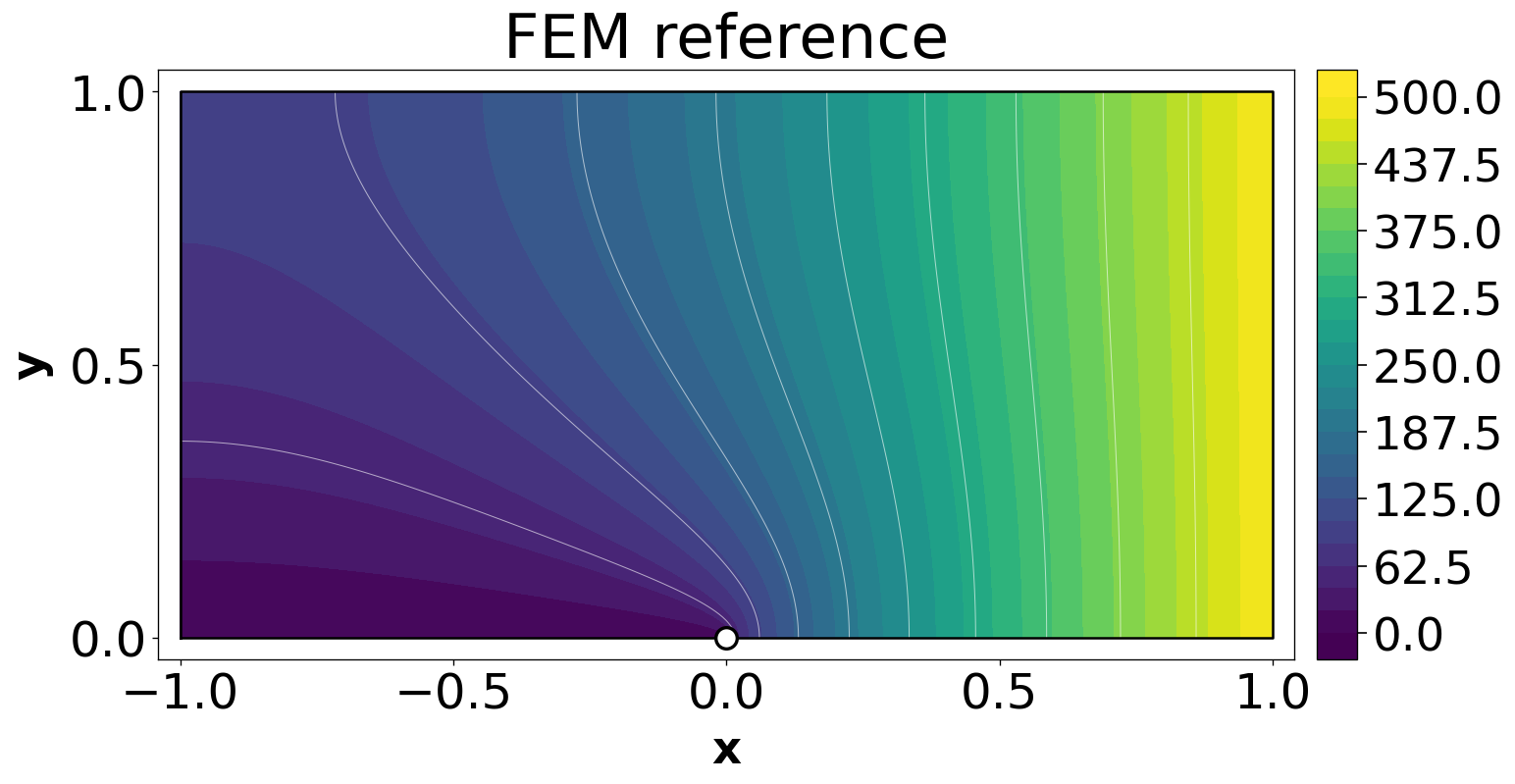}
\end{subfigure}
\hfill
\begin{subfigure}{0.32\textwidth}
    \textbf{B}\\[0.5mm]
    \includegraphics[width=\linewidth]{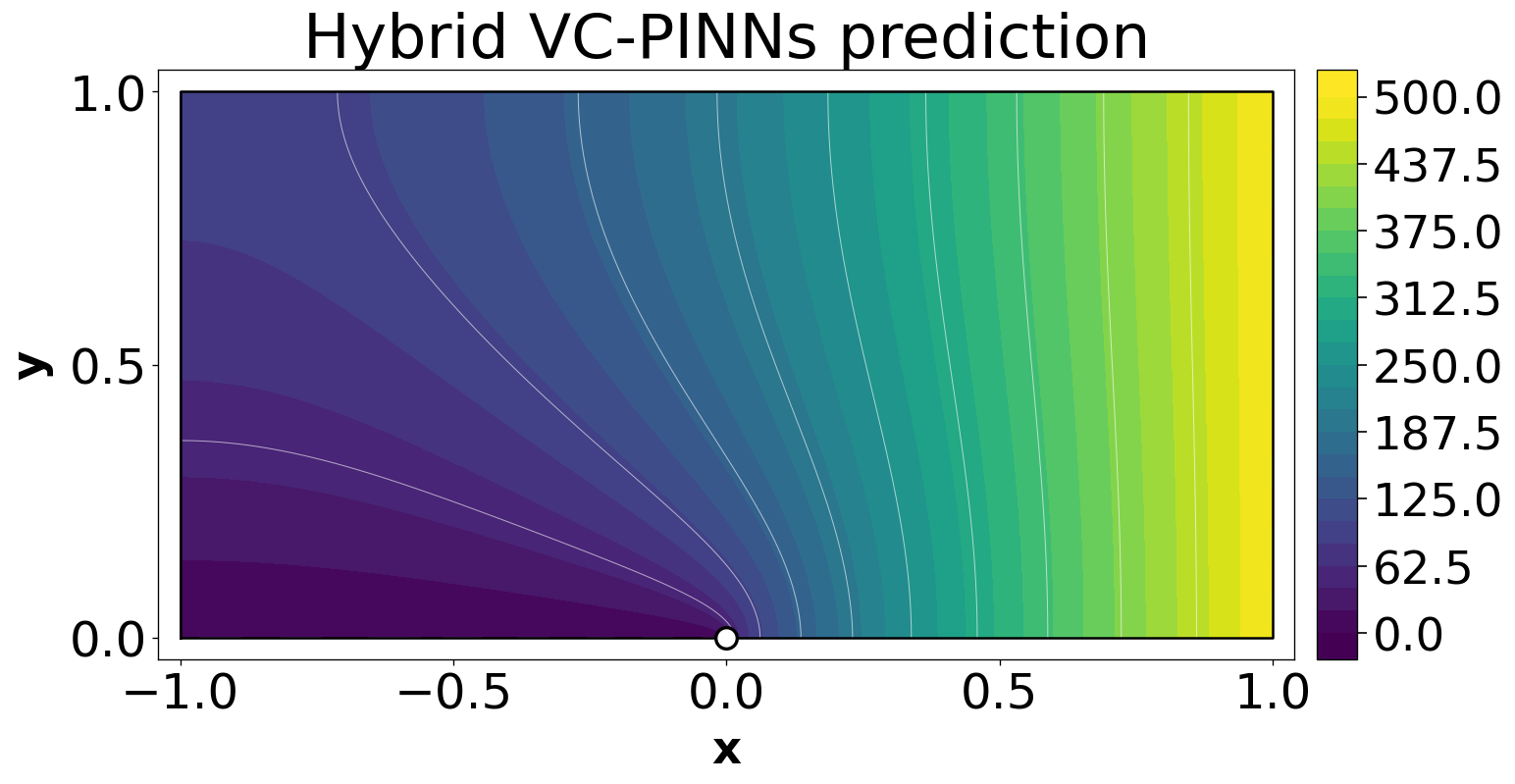}
\end{subfigure}
\hfill
\begin{subfigure}{0.32\textwidth}
    \textbf{C}\\[0.5mm]
    \includegraphics[width=\linewidth]{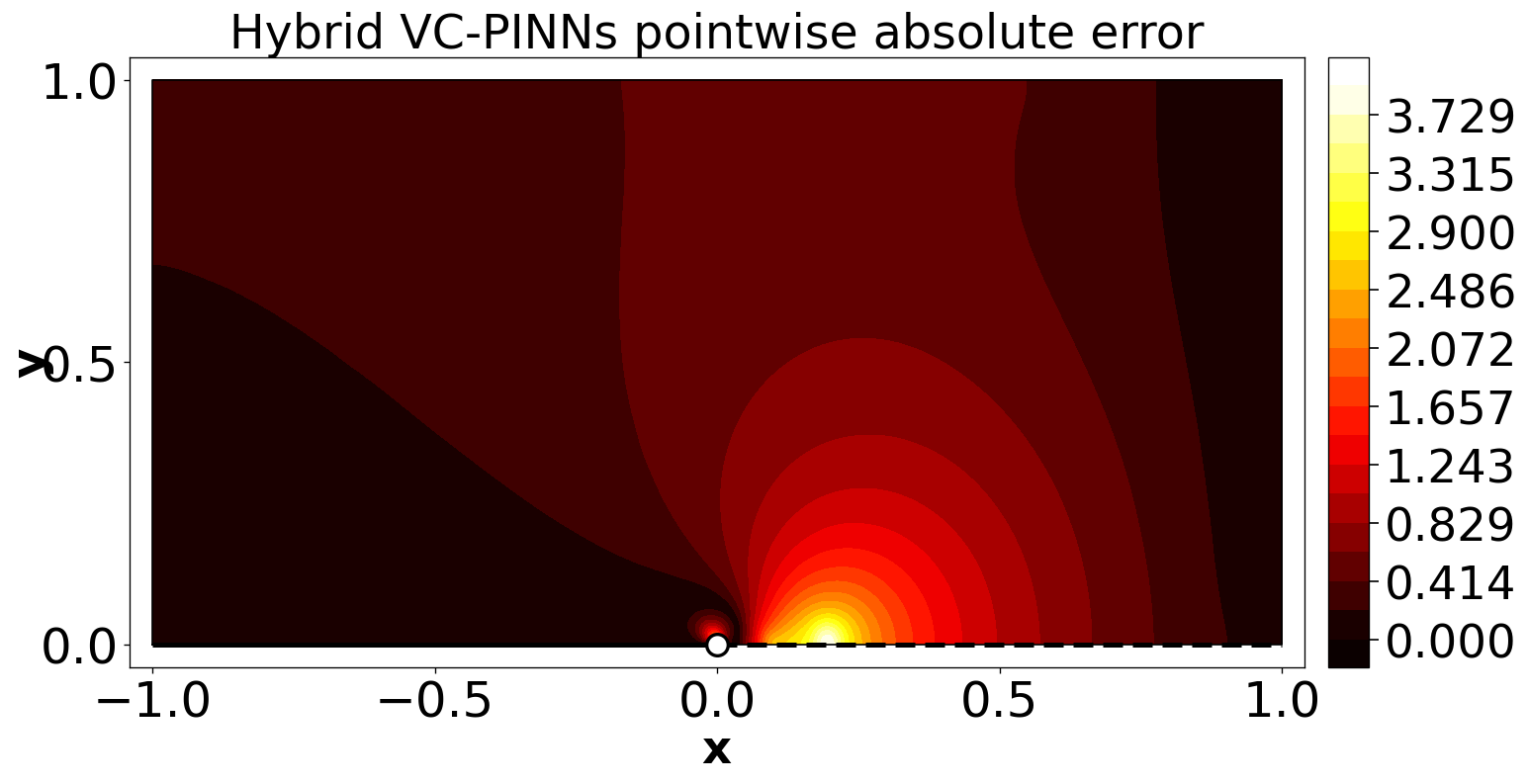}
\end{subfigure}

%---------------- Row 2 ----------------%
\begin{subfigure}{0.32\textwidth}
    \textbf{D}\\[0.5mm]
    \includegraphics[width=\linewidth]{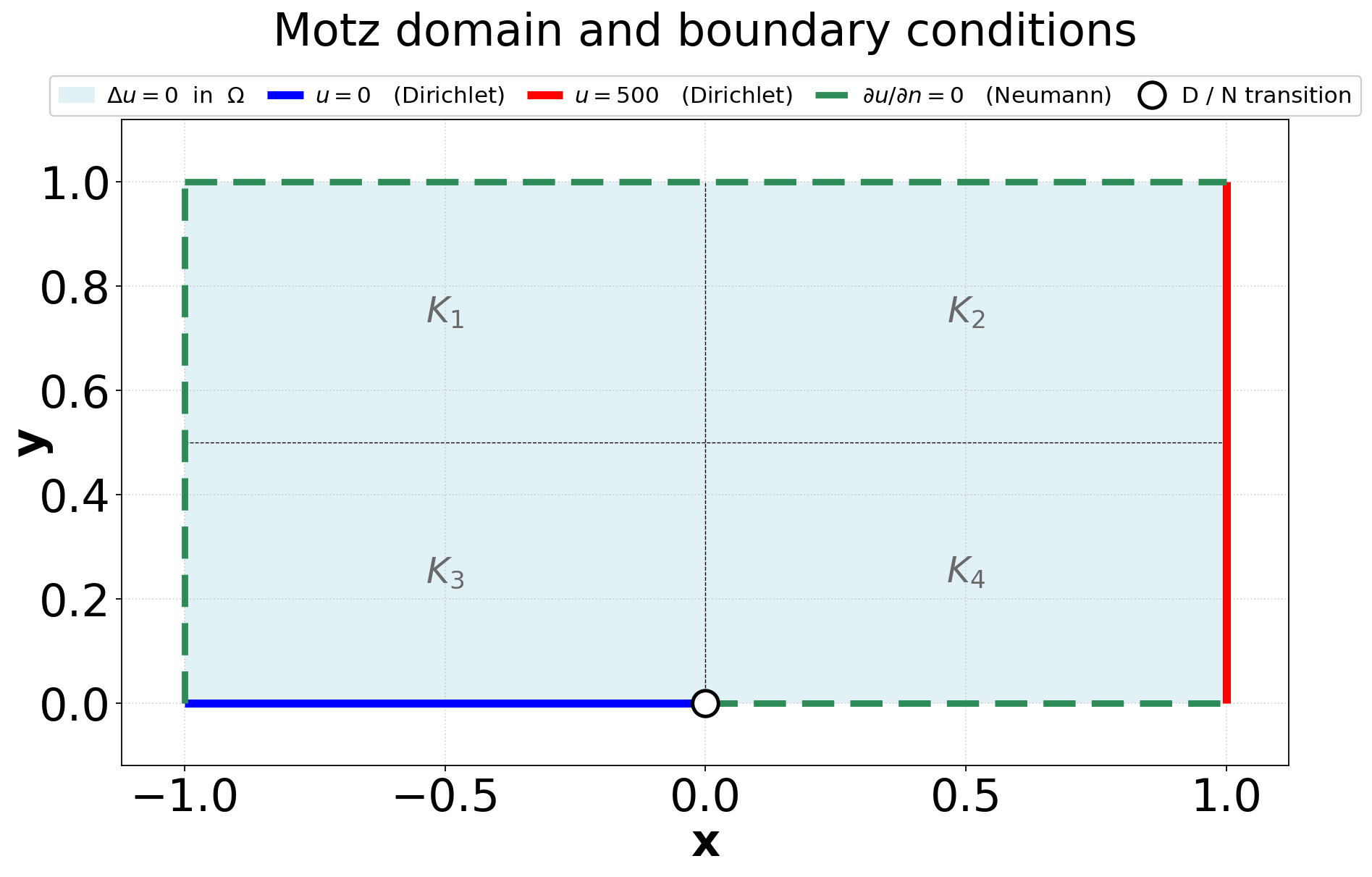}
\end{subfigure}
\hfill
\begin{subfigure}{0.32\textwidth}
    \textbf{E}\\[0.5mm]
    \includegraphics[width=\linewidth]{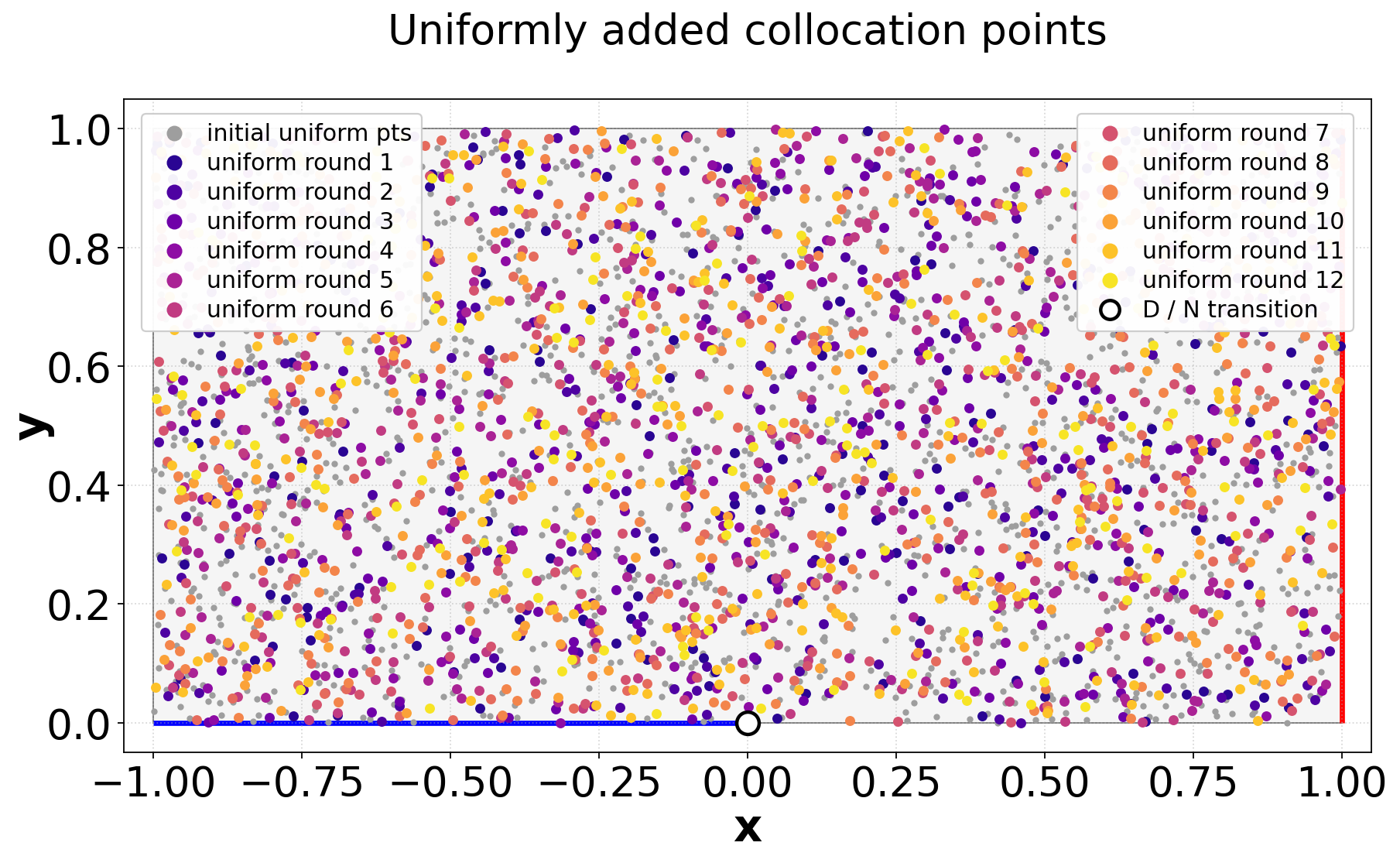}
\end{subfigure}
\hfill
\begin{subfigure}{0.32\textwidth}
    \textbf{F}\\[0.5mm]
    \includegraphics[width=\linewidth]{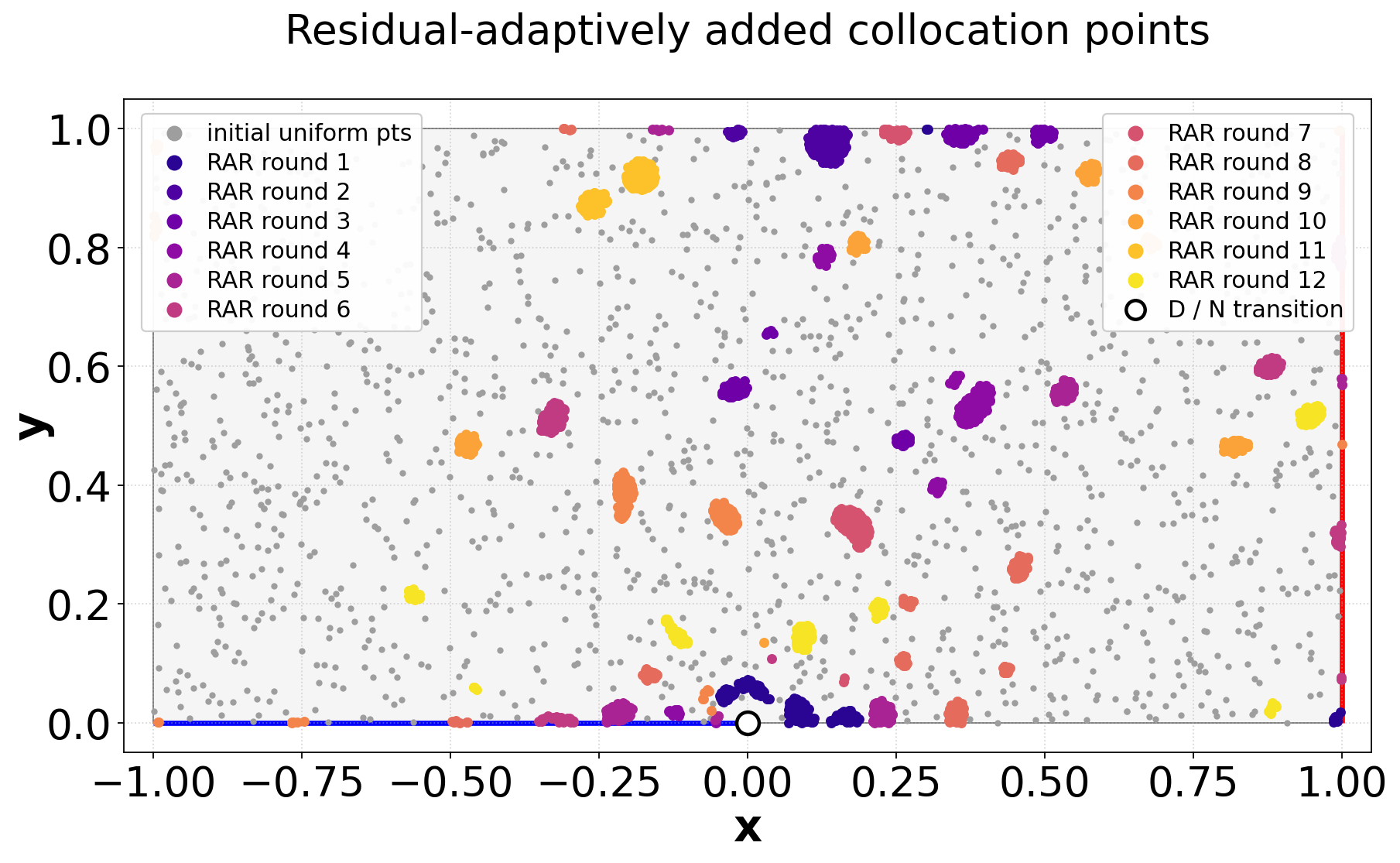}
\end{subfigure}

%---------------- Row 3 ----------------%
\begin{subfigure}{0.32\textwidth}
    \textbf{G}\\[0.5mm]
    \includegraphics[width=\linewidth]{motz_uniform_exact.png}
\end{subfigure}
\hfill
\begin{subfigure}{0.32\textwidth}
    \textbf{H}\\[0.5mm]
    \includegraphics[width=\linewidth]{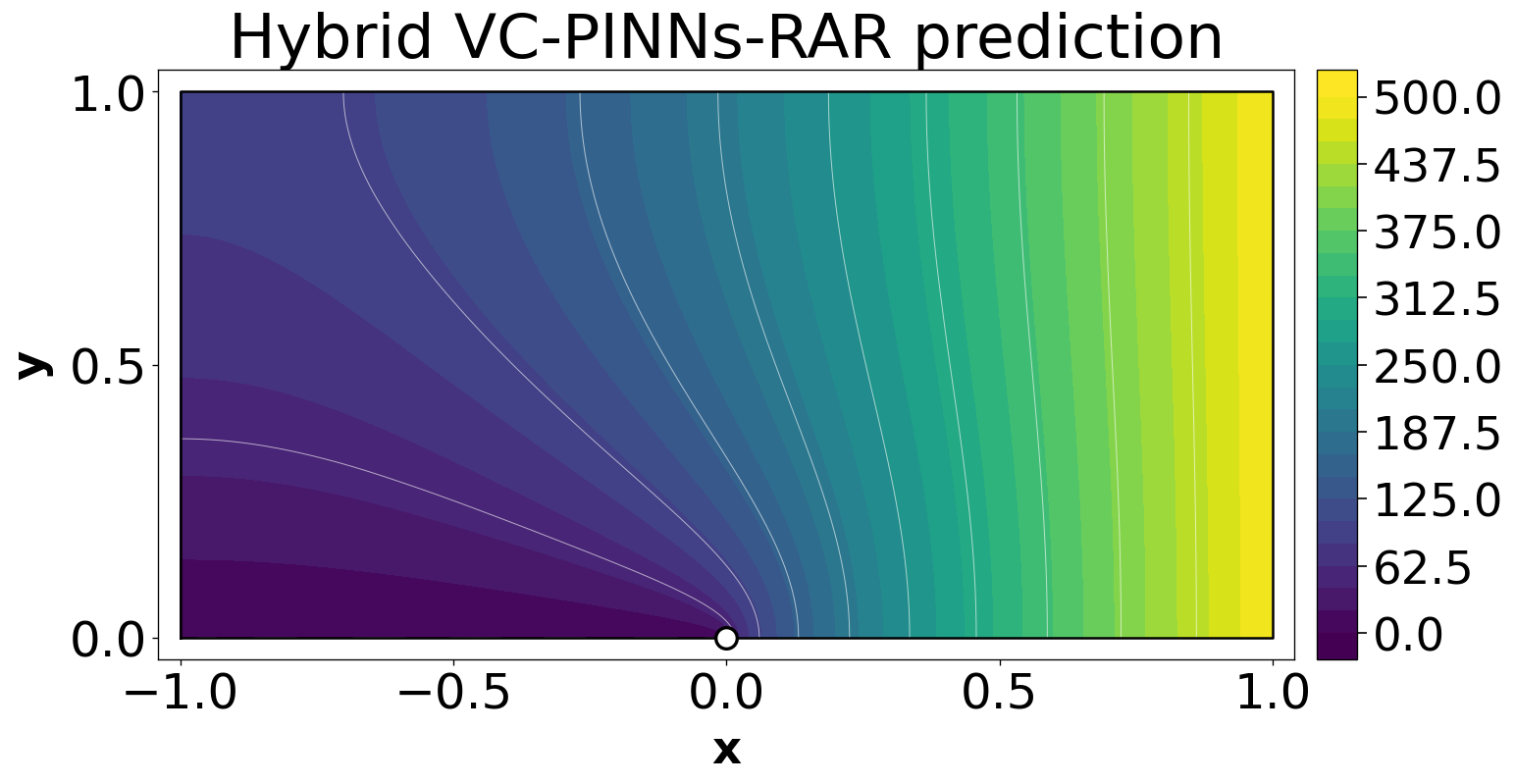}
\end{subfigure}
\hfill
\begin{subfigure}{0.32\textwidth}
    \textbf{I}\\[0.5mm]
    \includegraphics[width=\linewidth]{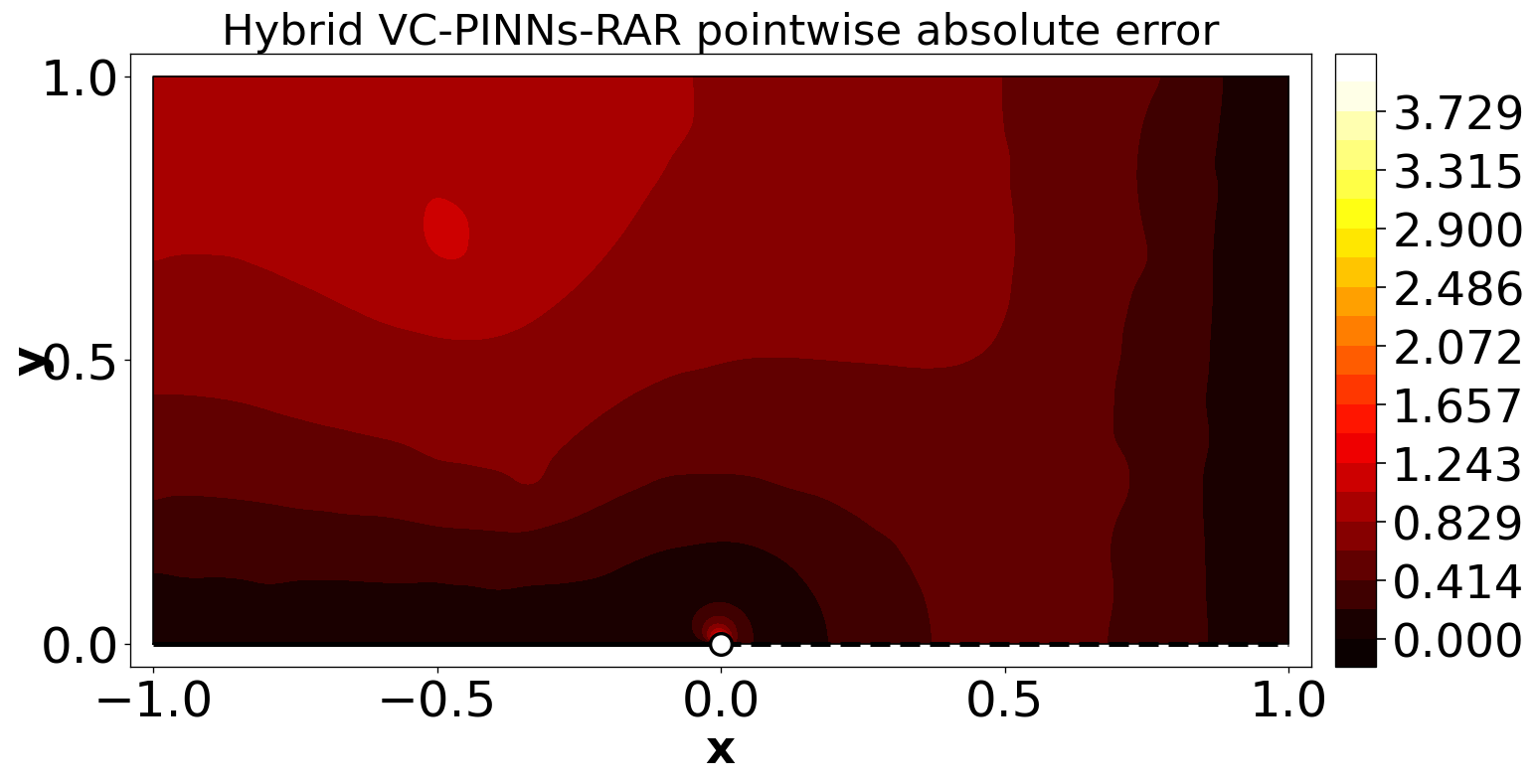}
\end{subfigure}

%---------------- Row 4 ----------------%
\begin{subfigure}{0.44\textwidth}
    \textbf{J}\\[0.5mm]
    \includegraphics[width=\linewidth]{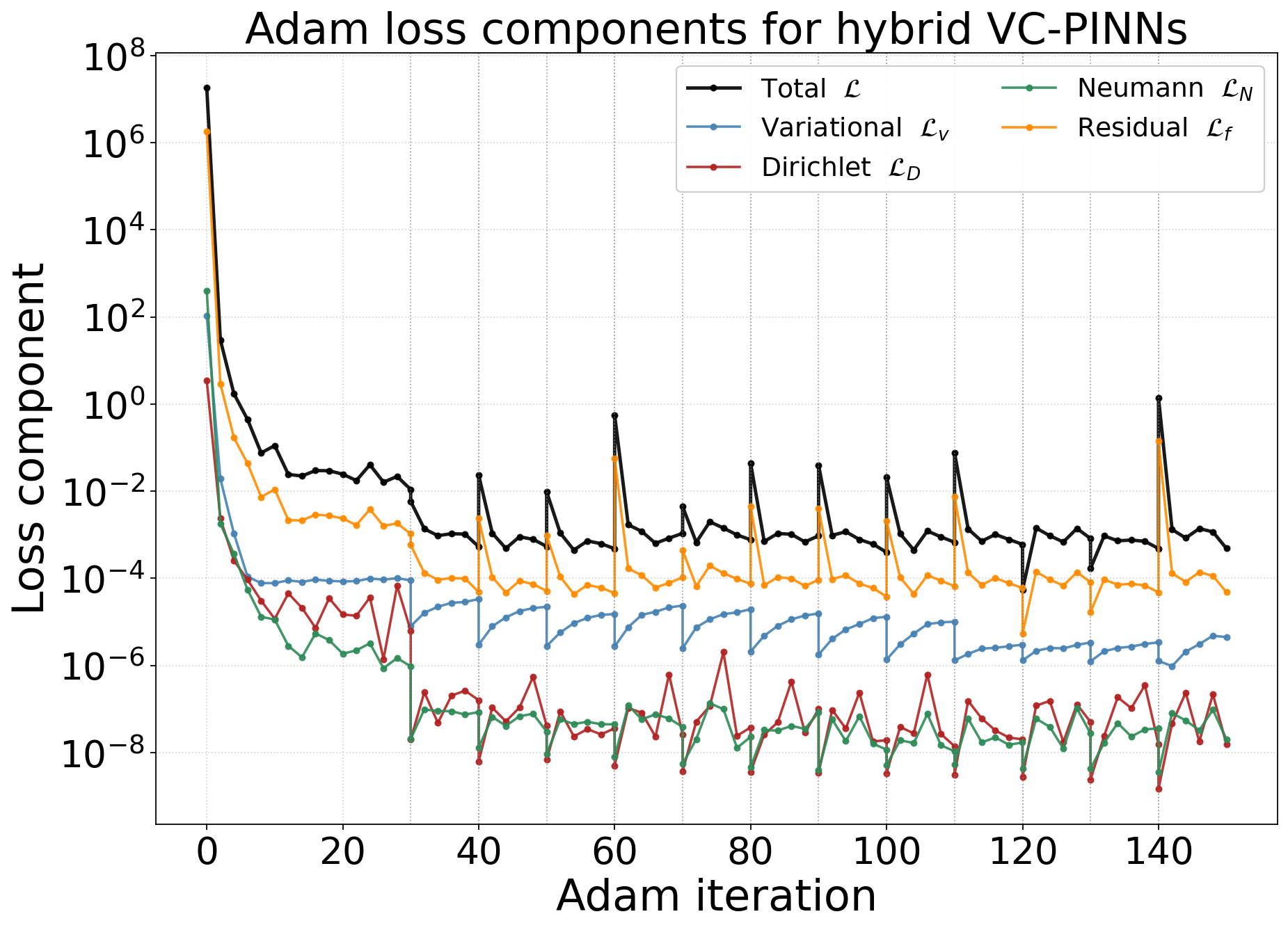}
\end{subfigure}
\hspace{0.04\textwidth}
\begin{subfigure}{0.44\textwidth}
    \textbf{K}\\[0.5mm]
    \includegraphics[width=\linewidth]{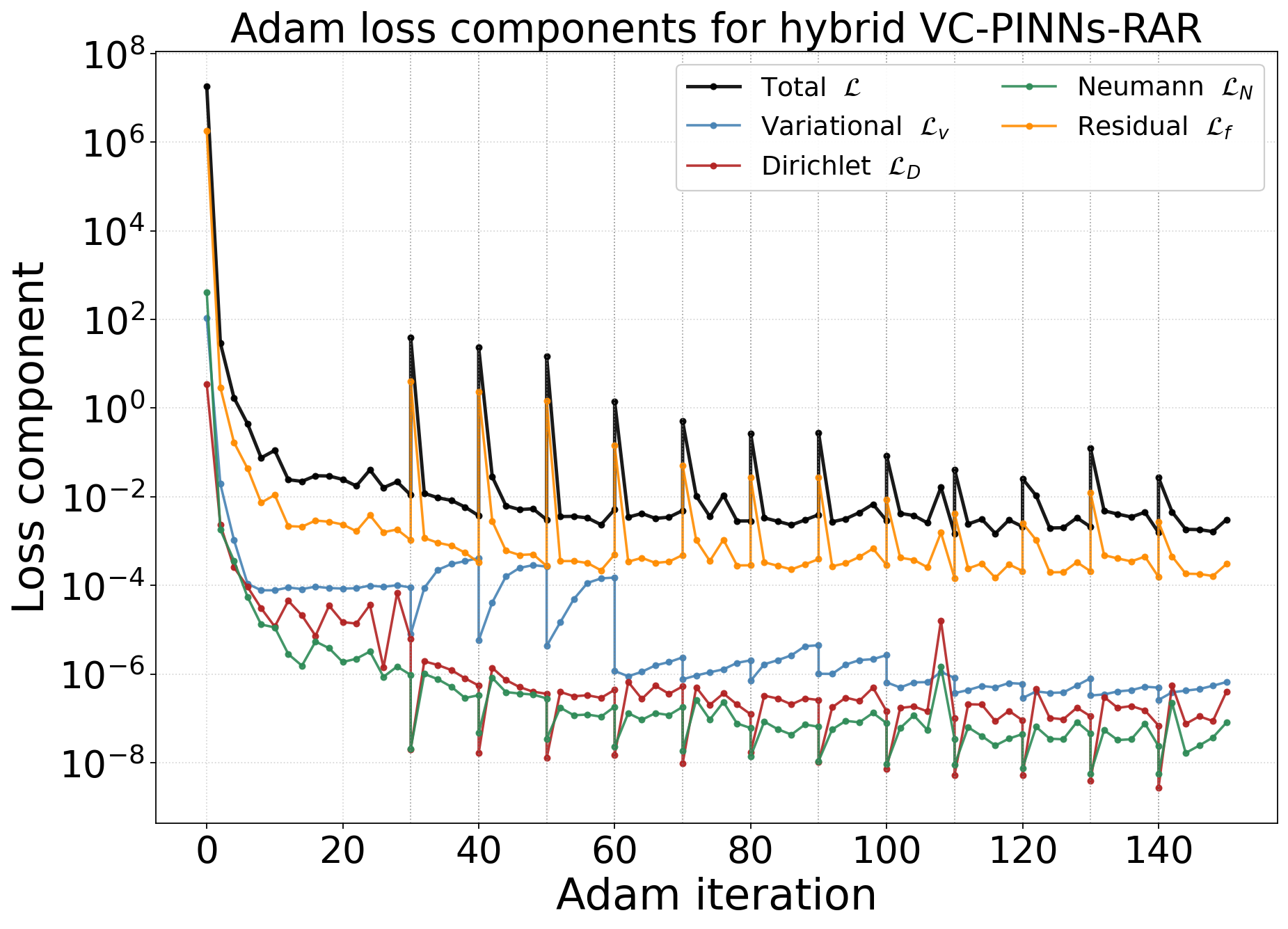}
\end{subfigure}
\caption{The Motz problem. \textbf{(A-C)} Finite element reference solution, VC-PINNs (uniform) prediction and pointwise absolute error against the reference. \textbf{(D-F)} Domain with the mixed boundary conditions and element partition, uniform collocation set and adaptive collocation set after twelve rounds, $|\mathcal{T}_{f}|=1500\to3900$. \textbf{(G-I)} Finite element reference solution, VC-PINNs-RAR prediction and pointwise absolute error. \textbf{(J-K)} Adam loss components for the uniform and the RAR arm.}\label{fig:PDE6_figures}
\end{figure}

\section{Concluding remarks}\label{Sec6_conclusion}
This work studied residual-based adaptive refinement within a hybrid variational-collocation PINNs for second-order elliptic boundary value problems with Dirichlet and Neumann data. The study covers geometric and boundary induced singularities, localized interior features and a boundary layer. The approach rests on a structural observation. A Petrov-Galerkin weak form yields one functional per element and no pointwise residual, so point level adaptivity cannot be driven by the variational term alone. The collocation term, built on the strong-form residual, supplies the missing pointwise quantity. Refinement therefore acts on the collocation set only
and the element partition, the test space and the quadrature rule are left
unchanged.

The scheme was assessed on five benchmarks, each chosen to isolate a distinct difficulty. In every comparison the adaptive and uniform runs share the final collocation count, the number of Adam iterations and the L-BFGS schedule. The two arms therefore differ only in how the collocation set is placed and grown. The comparison is matched in budget. The relative $H^{1}$ semi-norm error decreased on all five problems, by factors between $1.7$ and $16$, which is consistent with refinement concentrating where the gradient is steep or unbounded. The $L^{2}$ error was less uniform. It ranged from a $32$ fold reduction on the cross-shaped domain to a slight increase on the Motz problem. The individual benchmarks
lead to the following conclusions.

\emph{Mode~III crack.} The slit makes the domain non-Lipschitz and the solution lies only in $H^{3/2-\epsilon}$ near the tip. Refinement graded the collocation set toward the crack tip and reduced both the $L^{2}$ and the $H^{1}$ errors. This problem motivated three selection devices, radial normalization of the indicator, density based sampling and a minimum separation filter. Together they prevent successive rounds from collapsing onto a small cluster of candidates. The RAR arm reduced the relative \(H^1\) seminorm error by 1.65 times compared with uniform arm.

\emph{Cross-shaped domain.} Four re-entrant corners generate $r^{2/3}$ singularities of unequal amplitude, which compete for the same collocation budget. The indicator ranked the corners by amplitude from a single scalar per candidate point, with no knowledge of the singular coefficients. This benchmark gave the largest $L^{2}$ reduction, a factor of $32$.

\emph{Reaction-diffusion with localized peaks.} Although the solution is smooth on a convex square, it has four sharp interior peaks with $O(1/\sigma)$ gradients. The problem therefore tests whether the indicator can resolve interior features of finite yet large gradient, rather than geometric singularities. RAR arm reduces the relative \(L^\infty\) error by a factor of 38 times compared with uniform arm.

\emph{Semilinear singularly perturbed problem.} For $-\varepsilon^{2}\Delta u + u^{3} = 1$ with $\varepsilon = 0.05$ on the L-shaped domain, a boundary layer along $\Gamma_{D}$ coexists with the $r^{2/3}$ singularity at the re-entrant corner. Accuracy was measured against a graded-mesh $P_{2}$ finite element reference. The indicator divided refinement between a one-dimensional layer and a point singularity, again from one scalar per candidate. RAR arm reduces the relative $L^\infty$ error by a factor of 16 times compared with uniform arm.

\emph{Motz problem.} Here the singularity arises from the change from Dirichlet to Neumann data along a straight edge, not from the geometry. Under the bare residual ranking, refinement improved every measure weighted toward the singular point, the $H^{1}$ semi-norm included. The domain-averaged $L^{2}$ error, however, increased slightly. The mechanism is identifiable. Each round placed its new points in a single neighborhood instead of grading them toward the singularity, so resolution near the
singular point was gained at the expense of the rest of the domain. RAR arm reduces the relative $L^\infty$  error by a factor of 2 times compared with
uniform arm.

In both the crack and the Motz problems the indicator located the singularity correctly. What differed was how ranked candidates were turned into new collocation
points. The evidence points to the selection rule, at least as much as the indicator, as the component that makes residual driven refinement reliable.
The radial normalization, density sampling and minimum separation filter were designed to prevent exactly the collapse observed on the Motz problem.
That they do so there remains to be confirmed.

Three limitations remain. First, the strong-form residual requires second derivatives of the network. Second, Neumann data are imposed by penalty.
Third, only the collocation set is adapted and no $h$- or $p$-refinement of the test space is performed. Finally, all results are outcome from a single random seed run.

\section*{CRediT authorship contribution statement}
\textbf{Somnath Giri:} Conceptualization, Methodology, Software, Investigation, Writing - original draft.
\textbf{Muslim Malik:} Supervision, Methodology, Writing - review \& editing.
\textbf{Rishikesh Yadav:} Supervision, Methodology, Validation, Writing - review \&
editing.

\section*{Disclosure statement}
No potential conflict of interest was reported by the author(s).

\section*{Declaration of competing interest}
The authors declare that they have no known competing financial interests or personal relationships that could have appeared to influence the work reported in this paper.

\section*{Data availability}
No data was used for the research described in the article.

%%=====================================================================


\begin{thebibliography}{99}
%1
\bibitem{Ciarlet2002}
Ciarlet, P. G. $(2002)$. The finite element method for elliptic problems. Society for Industrial and Applied Mathematics.

%2
\bibitem{BrennerScott2008}
Brenner, S. C., \& Scott, L. R. $(2008)$. The mathematical theory of finite element methods. New York, NY: Springer New York.

%3
\bibitem{LeVeque2007}
LeVeque, R. J. $(2007)$. Finite difference methods for ordinary and partial differential equations: steady-state and time-dependent problems. Society for Industrial and Applied Mathematics.

%4
\bibitem{Grisvard2011}
Grisvard, P. $(2011)$. Elliptic problems in nonsmooth domains. Society for Industrial and Applied Mathematics.

%5
\bibitem{Motz1947}
Motz, H. (1947). The treatment of singularities of partial differential equations by relaxation methods. Quarterly of Applied Mathematics, $4(4), 371-377.$

%6
\bibitem{Kondratev1967}
Kondrat'ev, V. A. $(1967)$. Boundary value problems for elliptic equations in domains with conical or angular points. Trudy Moskovskogo Matematicheskogo Obshchestva, $16, 209-292.$

%7
\bibitem{Verfurth2013}
Verfürth, R. $(2013)$. A posteriori error estimation techniques for finite element methods. OUP Oxford.

%8
\bibitem{BinevDahmenDeVore2004}
Binev, P., Dahmen, W., \& DeVore, R. $(2004)$. Adaptive finite element methods with convergence rates. Numerische Mathematik, $97(2), 219-268.$

%9
\bibitem{Cascon2008}
Cascon, J. M., Kreuzer, C., Nochetto, R. H., \& Siebert, K. G. $(2008)$. Quasi-optimal convergence rate for an adaptive finite element method. SIAM Journal on Numerical Analysis, $46(5), 2524-2550.$

%10
\bibitem{SzaboBabuska2021}
Szabó, B., \& Babuška, I. $(2021)$. Finite Element Analysis: Method. Verification and Validation, $2023(23), 980.$

%11
\bibitem{MitchellMcClain2011}
Mitchell, W. F., \& McClain, M. A. $(2011)$. A survey of hp-adaptive strategies for elliptic partial differential equations. In Recent advances in computational and applied mathematics (pp. $227-258)$. Dordrecht: Springer Netherlands.

%12
\bibitem{Raissi2019}
Raissi, M., Perdikaris, P., \& Karniadakis, G. E. $(2019)$. Physics-informed neural networks: A deep learning framework for solving forward and inverse problems involving nonlinear partial differential equations. Journal of Computational physics, $378, 686-707$.

%13
\bibitem{Lagaris1998}
Lagaris, I. E., Likas, A., \& Fotiadis, D. I. $(1998)$. Artificial neural networks for solving ordinary and partial differential equations. IEEE transactions on neural networks, $9(5), 987-1000.$

%14
\bibitem{Baydin2018}
Baydin, A. G., Pearlmutter, B. A., Radul, A. A., \& Siskind, J. M. $(2018)$. Automatic differentiation in machine learning: a survey. Journal of machine learning research, $18(153), 1-43.$

%15
\bibitem{Karniadakis2021}
Karniadakis, G. E., Kevrekidis, I. G., Lu, L., Perdikaris, P., Wang, S., \& Yang, L. $(2021)$. Physics-informed machine learning. Nature Reviews Physics, $3(6), 422-440.$

%16
\bibitem{NoorMalik2025}
Noor, M., \& Malik, M. $(2025)$. Deep neural network approach to solve coupled differential equations with multiple delays. Physica Scripta, $100(7), 075259.$

%17
\bibitem{Rahaman2019}
Rahaman, N., Baratin, A., Arpit, D., Draxler, F., Lin, M., Hamprecht, F., ... \& Courville, A. $(2019, May)$. On the spectral bias of neural networks. In International conference on machine learning $(pp. 5301-5310). PMLR.$

%18
\bibitem{WangWangPerdikaris2021}
Wang, S., Wang, H., \& Perdikaris, P. $(2021)$. On the eigenvector bias of Fourier feature networks: From regression to solving multi-scale PDEs with physics-informed neural networks. Computer Methods in Applied Mechanics and Engineering, $384, 113938.$

%19
\bibitem{Krishnapriyan2021}
Krishnapriyan, A., Gholami, A., Zhe, S., Kirby, R., \& Mahoney, M. $(2021)$. Characterizing possible failure modes in physics-informed neural networks. Advances in neural information processing systems, $34, 26548-26560.$

%20
\bibitem{Wang2021}
Wang, S., Teng, Y., \& Perdikaris, P. $(2021)$. Understanding and mitigating gradient flow pathologies in physics-informed neural networks. SIAM Journal on Scientific Computing, $43(5), A3055-A3081.$

%21
\bibitem{McFallMahan2009}
McFall, K. S., \& Mahan, J. R. $(2009)$. Artificial neural network method for solution of boundary value problems with exact satisfaction of arbitrary boundary conditions. IEEE transactions on neural networks, $20(8), 1221-1233.$

%22
\bibitem{SukumarSrivastava2022}
Sukumar, N., \& Srivastava, A. $(2022)$. Exact imposition of boundary conditions with distance functions in physics-informed deep neural networks. Computer Methods in Applied Mechanics and Engineering, $389, 114333$.

%23
\bibitem{BeidokhtiMalek2009}
Beidokhti, R. S., \& Malek, A. $(2009)$. Solving initial-boundary value problems for systems of partial differential equations using neural networks and optimization techniques. Journal of the franklin institute, $346(9), 898-913.$

%24
\bibitem{HuJinZhou2024}
Hu, T., Jin, B., \& Zhou, Z. $(2024)$. Solving Poisson problems in polygonal domains with singularity enriched physics informed neural networks. SIAM Journal on Scientific Computing, $46(4), C369-C398.$

%25
\bibitem{Kharazmi2019}
Kharazmi, E., Zhang, Z., \& Karniadakis, G. E. $(2019)$. Variational physics-informed neural networks for solving partial differential equations. arXiv preprint arXiv:$1912.00873.$

%26
\bibitem{Kharazmi2021}
Kharazmi, E., Zhang, Z., \& Karniadakis, G. E. $(2021)$. hp-VPINNs: Variational physics-informed neural networks with domain decomposition. Computer Methods in Applied Mechanics and Engineering, $374, 113547.$

%27
\bibitem{GriffithsLorenz1978}
Griffiths, D. F., \& Lorenz, J. $(1978)$. An analysis of the petrov—galerkin finite element method. Computer Methods in Applied Mechanics and Engineering, $14(1), 39-64.$

%28
\bibitem{BerroneCanutoPintore2022a}
Berrone, S., Canuto, C., \& Pintore, M. $(2022)$. Variational physics informed neural networks: the role of quadratures and test functions. Journal of Scientific Computing, $92(3), 100.$

%29
\bibitem{BerroneCanutoPintore2022b}
Berrone, S., Canuto, C., \& Pintore, M. $(2022)$. Solving PDEs by variational physics-informed neural networks: an a posteriori error analysis: S. Berrone et al. ANNALI DELL'UNIVERSITA'DI FERRARA, $68(2), 575-595$.

%30
\bibitem{Rojas2024}
Rojas, S., Maczuga, P., Muñoz-Matute, J., Pardo, D., \& Paszyński, M. $(2024)$. Robust variational physics-informed neural networks. Computer Methods in Applied Mechanics and Engineering, $425, 116904.$

%31
\bibitem{Taylor2023}
Taylor, J. M., Pardo, D., \& Muga, I. $(2023)$. A deep Fourier residual method for solving PDEs using neural networks. Computer Methods in Applied Mechanics and Engineering, $405, 115850.$

%32
\bibitem{Taylor2024}
Taylor, J. M., Bastidas, M., Calo, V. M., \& Pardo, D. $(2024)$. Adaptive Deep Fourier Residual method via overlapping domain decomposition. Computer Methods in Applied Mechanics and Engineering, $427, 116997.$

%33
\bibitem{BerronePintore2024}
Berrone, S., \& Pintore, M. $(2024)$. Meshfree Variational-Physics-Informed neural networks (MF-VPINN): An adaptive training strategy. Algorithms, $17(9), 415.$

%34
\bibitem{Udomworarat2026}
Udomworarat, T., Brevis, I., van der Zee, K. G., \& Rojas, S. $(2026).$ Neural network approximation in discrete dual norms with adaptive test spaces. arXiv preprint arXiv:$2606.20972.$

%35
\bibitem{Badia2025}
Badia, S., Li, W., \& Martín, A. F. $(2025)$. Adaptive finite element interpolated neural networks. Computer Methods in Applied Mechanics and Engineering, $437, 117806.$

%36
\bibitem{Lu2021}
Lu, L., Meng, X., Mao, Z., \& Karniadakis, G. E. $(2021)$. DeepXDE: A deep learning library for solving differential equations. SIAM review, $63(1), 208-228.$

%37
\bibitem{Wu2023}
Wu, C., Zhu, M., Tan, Q., Kartha, Y., \& Lu, L. $(2023)$. A comprehensive study of non-adaptive and residual-based adaptive sampling for physics-informed neural networks. Computer Methods in Applied Mechanics and Engineering, $403, 115671.$

%38
\bibitem{MaoMeng2023}
Mao, Z., \& Meng, X. $(2023)$. Physics-informed neural networks with residual/gradient-based adaptive sampling methods for solving partial differential equations with sharp solutions. Applied Mathematics and Mechanics, $44(7), 1069-1084.$

%39
\bibitem{Tang2023}
Tang, K., Wan, X., \& Yang, C. $(2023)$. DAS-PINNs: A deep adaptive sampling method for solving high-dimensional partial differential equations. Journal of Computational Physics, $476, 111868.$

%40
\bibitem{Gao2023}
Gao, Z., Yan, L., \& Zhou, T. $(2023)$. Failure-informed adaptive sampling for PINNs. SIAM Journal on Scientific Computing, $45(4), A1971-A1994.$

%41
\bibitem{INIVPINN2026}
Dodge, S., Formisano, A., \& Barmada, S. $(2026)$. INI-VPINN: A variational physics-informed neural network with implicit Neumann and interface handling for multi-material domains with geometric singularities. Journal of Computational Physics, $565, 115328$.

%42
\bibitem{Los2025}
Łoś, M., Służalec, T., Maczuga, P., Vilkha, A., Uriarte, C., \& Paszyński, M. $(2025)$. Collocation-based robust variational physics-informed neural networks (CRVPINNs). Computers \& Structures, $316, 107839$.

%43
\bibitem{Zeidler2013}
Zeidler, E. $(2013)$. Nonlinear functional analysis and its applications: II/B: nonlinear monotone operators. Springer Science \& Business Media.

%44
\bibitem{Tancik2020}
Tancik, M., Srinivasan, P., Mildenhall, B., Fridovich-Keil, S., Raghavan, N., Singhal, U., ... \& Ng, R. $(2020)$. Fourier features let networks learn high frequency functions in low dimensional domains. Advances in neural information processing systems, $33, 7537-7547$.

%45
\bibitem{FixGulatiWakoff1973}
Fix, G. J., Gulati, S., \& Wakoff, G. I. $(1973)$. On the use of singular functions with finite element approximations. Journal of Computational Physics, $13(2), 209-228.$

%46
\bibitem{Moes1999}
Moës, N., Dolbow, J., \& Belytschko, T. $(1999)$. A finite element method for crack growth without remeshing. International journal for numerical methods in engineering, $46(1), 131-150$.

%47
\bibitem{Gu2023}
Gu, Y., Zhang, C., Zhang, P., Golub, M. V., \& Yu, B. $(2023)$. Enriched physics-informed neural networks for $2D$ in-plane crack analysis: Theory and MATLAB code. International Journal of Solids and Structures, $276, 112321$.

%48
\bibitem{Zhu2025}
Zhu, B., Li, H., \& Zhang, Q. $(2025)$. Extended physics-informed extreme learning machine for linear elastic fracture mechanics. Computer Methods in Applied Mechanics and Engineering, $435, 117655$.

%49
\bibitem{Kingma2015}
Kingma, D. P., \& Ba, J. $(2014)$. Adam: A method for stochastic optimization. arXiv preprint arXiv:$1412.6980$.

%50
\bibitem{LiuNocedal1989}
Liu, D. C., \& Nocedal, J. $(1989)$. On the limited memory BFGS method for large scale optimization. Mathematical programming, $45(1), 503-528$.

%51
\bibitem{NocedalWright2006}
Nocedal, J., \& Wright, S. $(2006)$. Numerical Optimization.

%52
\bibitem{Mitchell2013}
Mitchell, W. F. $(2013)$. A collection of $2D$ elliptic problems for testing adaptive grid refinement algorithms. Applied mathematics and computation, $220, 350-364.$

%53
\bibitem{AllenCahn1979}
Allen, S. M., \& Cahn, J. W. $(1979)$. A microscopic theory for antiphase boundary motion and its application to antiphase domain coarsening. Acta metallurgica, $27(6), 1085-1095.$

%54
\bibitem{Paszke2019}
Paszke, A., Gross, S., Massa, F., Lerer, A., Bradbury, J., Chanan, G., ... \& Chintala, S. $(2019)$. Pytorch: An imperative style, high-performance deep learning library. Advances in neural information processing systems, $32$.

%55 
\bibitem{Anderson2017}
Anderson, T. L. (2017). Fracture mechanics: Fundamentals and applications (4th ed.). Boca Raton, FL: CRC Press.

%56
\bibitem{LuHuLi2004}
Lu, T. T., Hu, H. Y., \& Li, Z. C. $(2004)$. Highly accurate solutions of Motz's and the cracked beam problems. Engineering Analysis with Boundary Elements, $28(11), 1387-1403$.

%57
\bibitem{Li2006}
Li, Z. C., Chan, Y. L., Georgiou, G. C., \& Xenophontos, C. $(2006)$. Special boundary approximation methods for Laplace equation problems with boundary singularities—applications to the Motz problem. Computers \& Mathematics with Applications, 51(1), 115-142.

%58
\bibitem{Williams1952}
Williams, M. L. $(1952)$. Stress singularities resulting from various boundary conditions in angular corners of plates in extension.
\end{thebibliography}
\end{document}